\numberwithin{equation}{section} 
\numberwithin{figure}{section} 
\theoremstyle{plain}
\newtheorem{thm}{\sc Theorem}[section]
  \theoremstyle{remark}
  \newtheorem*{rem*}{Remark}
  \theoremstyle{plain}
  \newtheorem{cor}[thm]{\sc Corollary}
  \theoremstyle{plain}
  \newtheorem{lem}[thm]{\sc Lemma}
  \theoremstyle{plain}
  \newtheorem{defn}[thm]{\sc Definition}
  \theoremstyle{plain}
  \newtheorem{prop}[thm]{\sc Proposition}
\begin{document}

\title{Closed orbits of a charge in a weakly exact magnetic field}

\author{Will J. Merry }
\begin{abstract}
We prove that for a weakly exact magnetic system on a closed connected
Riemannian manifold, almost all energy levels contain a closed orbit.
More precisely, we prove the following stronger statements. Let $(M,g)$
denote a closed connected Riemannian manifold and $\sigma\in\Omega^{2}(M)$
a weakly exact $2$-form. Let $\phi_{t}:TM\rightarrow TM$ denote
the magnetic flow determined by $\sigma$, and let $c(g,\sigma)\in\mathbb{R}\cup\{\infty\}$
denote the Ma\~né critical value of the pair $(g,\sigma)$. We prove
that if $k>c(g,\sigma)$, then for every non-trivial free homotopy
class of loops on $M$ there exists a closed orbit of $\phi_{t}$
with energy $k$ whose projection to $M$ belongs to that free homotopy
class. We also prove that for almost all $k<c(g,\sigma)$ there exists
a closed orbit of $\phi_{t}$ with energy $k$ whose projection to
$M$ is contractible. In particular, when $c(g,\sigma)=\infty$ this
implies that almost every energy level has a contractible closed orbit.
As a corollary we deduce that a weakly exact magnetic flow with $[\sigma]\ne0$
on a manifold with amenable fundamental group (which implies $c(g,\sigma)=\infty$)
has contractible closed orbits on almost every energy level. 
\end{abstract}

\address{Department of Pure Mathematics and Mathematical Statistics, University
of Cambridge, Cambridge CB3 0WB, England}

\email{\texttt{w.merry@dpmms.cam.ac.uk}}

\maketitle

\section{Introduction}

Let $(M,g)$ denote a closed connected Riemannian manifold, with tangent
bundle $\pi:TM\rightarrow M$ and universal cover $\tilde{M}$. We
will assume $M$ is \emph{not }simply connected, as otherwise $\tilde{M}=M$
and all results proved in this paper reduce to special cases of the
results in \cite{Contreras2006}. Let $\sigma\in\Omega^{2}(M)$ denote
a closed $2$-form, and let $\tilde{\sigma}\in\Omega^{2}(\tilde{M})$
denote its pullback to the universal cover. In this paper we consider
the case where $\sigma$ is \emph{weakly exact}, that is, when $\tilde{\sigma}$
is exact (this is equivalent to requiring that $\sigma|_{\pi_{2}(M)}=0$),
however we\emph{ }do\emph{ not }assume that $\tilde{\sigma}$ necessarily
admits a \emph{bounded} primitive.\newline 

Let $\omega_{g}$ denote the standard symplectic form on $TM$ obtained
by pulling back the canonical symplectic form $dq\wedge dp$ on $T^{*}M$
via the Riemannian metric. Let $\omega:=\omega_{g}+\pi^{*}\sigma$
denote the \emph{twisted symplectic form }determined by the pair $(g,\sigma)$.
Let $E:TM\rightarrow\mathbb{R}$ denote the energy Hamiltonian $E(q,v)=\frac{1}{2}\left|v\right|^{2}$.
Let $\phi_{t}:TM\rightarrow TM$ denote the flow of the symplectic
gradient of $E$ with respect to $\omega$; $\phi_{t}$ is known as
a \emph{twisted geodesic flow }or a \emph{magnetic flow}. The reason
for the latter terminology is that this flow can be thought of as
modeling the motion of a particle of unit mass and unit charge under
the effect of a magnetic field represented by the $2$-form $\sigma$.
Given $k\in\mathbb{R}^{+}:=\{t\in\mathbb{R}\,:\, t>0\}$, let $\Sigma_{k}:=E^{-1}(k)\subseteq TM$.\newline 

There exists a number $c=c(g,\sigma)\in\mathbb{R}\cup\{\infty\}$
called the \emph{Ma\~né critical value} (see \cite{Mane1996,ContrerasDelgadoIturriaga1997,ContrerasIturriaga1999,BurnsPaternain2002}
or Section \ref{sec:Preliminaries} below for the precise definition)
such that the dynamics of the hypersurface $\Sigma_{k}$ differs dramatically
depending on whether $k<c$, $k=c$ or $k>c$. Moreover $c<\infty$
if and only if $\tilde{\sigma}$ admits a bounded primitive.\newline

In this paper we study the old problem of the existence of closed
orbits on prescribed energy levels. In the case when $\sigma$ is
exact, this problem has been essentially solved by Contreras in \cite{Contreras2006};
see Theorem D in particular, which gives contractible closed orbits
in almost every energy level below the Ma\~né critical value, and
closed orbits in every free homotopy class for any energy level above
the critical value. In the case of surfaces a stronger result is known
to hold: Contreras, Macarini and Paternain have proved in \cite[Theorem 1.1]{ContrerasMacariniPaternain2004}
that in this case \emph{every} energy level admits a closed orbit.
However the case of a \emph{magnetic monopole} (i.e. when $\sigma$
is not exact) remains open, although much progress has been made.
Let us describe now some of these results. A more comprehensive survey
can be found in the introduction to \cite{ContrerasMacariniPaternain2004};
see also \cite{Ginzburg1996} for a introductory account of the problem.

It was proved by Macarini \cite{Macarini2006}, extending an earlier
result of Polterovich \cite{Polterovich1998}, that if $[\sigma]\ne0$
there exist non trivial contractible closed orbits of the magnetic
flow in a sequence of arbitrarily small energy levels. Kerman \cite{Kerman2000}
proved the same result for magnetic fields given by symplectic forms.
This was then recently sharped by Ginzburg and G$\ddot{\mbox{u}}$rel
\cite{GinzburgGurel2009} and finally by Usher \cite{Usher2009},
where it is proved that when $\sigma$ is symplectic contractible
closed orbits exist for all low energy levels. See also Lu \cite{Lu2006}
for another interesting approach to the problem in the case of $\sigma$
symplectic. Perhaps the most general result so far is due to Schlenk
\cite{Schlenk2006} where it is shown that for \emph{any} closed $2$-form
(not necessarily weakly exact), almost every sufficiently small energy
level contains a contractible closed orbit. \newline

The aim of this paper is to extend Theorem D of \cite{Contreras2006}
to the weakly exact case. More precisely, we prove the following.
\begin{thm}
\label{thm:theorem A}Let $(M,g)$ denote a closed connected Riemannian
manifold, and let $\sigma\in\Omega^{2}(M)$ denote a closed $2$-form
whose pullback to the universal cover $\tilde{M}$ is exact. Let $c=c(g,\sigma)\in\mathbb{R}\cup\{\infty\}$
denote the Ma\~né critical value, and let $\phi_{t}$ denote the
magnetic flow defined by $\sigma$. Then:
\begin{enumerate}
\item If $c<\infty$ then for all $k>c$ and for each non-trivial homotopy
class $\nu\in[\mathbb{T},M]$, there exists a closed orbit of $\phi_{t}$
with energy $k$ such that the projection to $M$ of that orbit belongs
to $\nu$.
\item For almost all $k\in(0,c)$, where possibly $c=\infty$, there exists
a contractible closed orbit of $\phi_{t}$ with energy $k$.\emph{ }
\end{enumerate}
\end{thm}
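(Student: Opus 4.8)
The plan is to adapt the variational/Lagrangian approach of Contreras \cite{Contreras2006} to the weakly exact setting by working on the universal cover (or on appropriate abelian/amenable covers) where $\tilde\sigma=d\theta$ for some primitive $\theta$, even though $\theta$ need not be bounded. On such a cover the magnetic flow becomes the Euler--Lagrange flow of the Lagrangian $L_\theta(q,v)=\tfrac12|v|^2-\theta_q(v)$, and the associated free-period action functional $S_k(\gamma)=\int_0^{T}\bigl(L_\theta(\gamma,\dot\gamma)+k\bigr)\,dt$ is well defined on loops in each free homotopy class. For part (1), with $c<\infty$ and $k>c$, I would fix a non-trivial class $\nu\in[\mathbb{T},M]$, lift the problem to a suitable covering where $\nu$ becomes (a shift of) a nontrivial class, and minimize $S_k$ over that class: the point of the hypothesis $k>c$ is precisely that $S_k$ is bounded below and the mountain-pass/minimization geometry works (Man\~e's characterization of $c$ via the positivity of the action), while nontriviality of $\nu$ prevents the minimizer from degenerating to a point. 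One must then check the Palais--Smale condition holds along minimizing sequences for the free-period functional — this is where the noncompactness of the cover and the unboundedness of $\theta$ bite — and conclude that a minimizer exists and is a closed orbit of energy $k$ by the usual Lagrange-multiplier argument relating the critical period to the energy level.

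For part (2), the energy levels $k<c$, the strategy is the Struwe-type monotonicity argument used by Contreras: one cannot hope for a critical point at \emph{every} $k<c$, but one shows that the minimax value $c_k$ of $S_k$ over a suitable family of loops (e.g. based at a point, sweeping out a disc, detecting $\pi_1$ or $\pi_2$ information) depends monotonically — hence is differentiable almost everywhere — on $k$, and that at each $k$ of differentiability the corresponding Palais--Smale sequences do not run off to infinity, so a genuine critical point exists. Concretely I would set up a minimax class that produces contractible closed orbits (this is where weak exactness, i.e. $\sigma|_{\pi_2}=0$, is used: it guarantees the action is well defined on contractible loops upstairs and that the minimax class is nonempty and has positive critical value for $k$ below but close to $c$), differentiate $k\mapsto c_k$, and extract orbits at the full-measure set of differentiability points. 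The case $c=\infty$ (in particular when $\pi_1(M)$ is amenable and $[\sigma]\ne0$, giving the stated corollary via Fhy's/Gromov's averaging showing no bounded primitive exists) is then the special case where the interval $(0,c)$ is all of $\mathbb{R}^+$.

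The main obstacle, and the novelty over \cite{Contreras2006}, is the lack of a bounded primitive for $\tilde\sigma$: the action functional on the universal cover is not defined by an honest $1$-form on $M$, so one cannot directly apply the compactness and a priori estimates from the exact case. The fix I anticipate is to not work on the universal cover at all for part (2) but rather on the \emph{maximal free abelian cover} or, more robustly, to use the amenability/F\o lner machinery: approximate $\tilde\sigma$ by forms with controlled primitives on larger and larger F\o lner sets, run the minimax on these finite pieces with uniform estimates, and pass to the limit. Establishing the uniform Palais--Smale bounds in this limiting procedure — i.e. that the geometry of the functional degenerates in a controlled way as the F\o lner exhaustion proceeds, exactly below the critical value $c$ — is the technical heart of the argument, and it is precisely here that the definition of $c(g,\sigma)$ as the infimum of the $k$ for which all the relevant action integrals stay positive must be invoked quantitatively.
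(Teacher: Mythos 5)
Your proposal correctly identifies the general framework (free-period Lagrangian action, Struwe monotonicity for part (2), weak exactness $\sigma|_{\pi_2}=0$ for well-definedness of the action on contractible loops, and the lack of a bounded primitive as the central difficulty), and part (1) is sketched essentially the way the paper does it: minimize $S_k$ in a nontrivial free homotopy class, with $k>c$ giving boundedness below. However, the technical mechanism you propose for part (2) is not what makes the paper work, and as stated it has a real gap.

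You suggest handling the unbounded primitive by working on the maximal free abelian cover or by F\o lner averaging. Neither of these is in the paper, and the F\o lner route cannot give the theorem in full generality: part (2) asserts closed orbits for almost all $k<c$ with \emph{no} hypothesis on $\pi_1(M)$, while F\o lner exhaustions exist only for amenable groups. (Amenability does appear in the paper, via Paternain's averaging lemma, but only locally — applied to cyclic subgroups $\langle\varphi\rangle$ and to $f_*\pi_1(\mathbb{T}^2)\cong\mathbb{Z}^2$ — and only when $c<\infty$.) The actual key observation of the paper is that one does not need to regularize $\theta$ at all: the functional
\[
S_k(x,T)=\int_0^1\frac{1}{2T}|\dot x|^2\,dt+kT-\int_{C(x)}\sigma
\]
is well defined directly on $\Lambda_M\times\mathbb{R}^+$ when $c<\infty$ (because $f^*\sigma$ is exact for every torus map $f$, Lemma \ref{lem:key observation}), and on $\Lambda_0\times\mathbb{R}^+$ in general via a capping disc, with no reference to a chosen primitive. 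For part (2) with $c=\infty$, the substitute for a bounded primitive is an exhaustion of $\tilde M$ by compact sets $K_n$: on the subsets $\Lambda_0^{K_n}$ of contractible loops whose lifts stay in $K_n$, the primitive is automatically bounded, so Palais--Smale holds (Proposition \ref{thm:(the-key-theorem new)}). The mountain-pass theorem is then formulated for an increasing sequence of families $\mathcal{F}_n$ of paths confined to $\Lambda_0^{K_n}$ (Theorem \ref{thm:mountain pass theorem}), with the Lebesgue differentiability of $k\mapsto\mu_\infty(k)$ providing the uniform bound on periods needed to keep the minimax loops in a fixed $\mathcal{U}_n$, and a Hausdorff-distance estimate (Lemma \ref{lem:hausdorff lemma}) keeping gradient trajectories in a nearby $\mathcal{V}_n$. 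So the paper's limiting argument is over compact exhaustions of $\tilde M$, not F\o lner exhaustions of $\pi_1(M)$; this is what the proof requires, and it is the piece your sketch is missing.
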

The first statement of Theorem \ref{thm:theorem A} has, under a mild
technical assumption on $\pi_{1}(M)$, previously been proved by Paternain
\cite{Paternain2006}. We use a completely different method of proof
however, which bypasses the need for this additional assumption. For
$c(g,\sigma)<\infty$, the second statement of Theorem \ref{thm:theorem A}
is due to Osuna \cite{Osuna2005}. It should be emphasized that we
believe the main contribution of the present paper is the case $c(g,\sigma)=\infty$
in the second statement. 
\begin{rem*}
We will actually prove a slightly stronger statement than the one
stated above; see Proposition \ref{thm:stronger} below for details.
\end{rem*}
When $\pi_{1}(M)$ is \emph{amenable }and $\sigma$ is not exact,
we always have $c(g,\sigma)=\infty$ (see for instance \cite[Corollary 5.4]{Paternain2006}).
Thus we have the following immediate corollary.
\begin{cor}
Let $(M,g)$ denote a closed connected Riemannian manifold, and let
$\sigma\in\Omega^{2}(M)$ denote a closed non-exact $2$-form whose
pullback to the universal cover $\tilde{M}$ is exact. Suppose $\pi_{1}(M)$
is amenable. Then almost every energy level contains a contractible
closed orbit of the magnetic flow defined by $\sigma$.
\end{cor}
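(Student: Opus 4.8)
The plan is to obtain the corollary as an immediate consequence of part~(2) of Theorem~\ref{thm:theorem A}. The only thing needing verification is that the hypotheses force the Ma\~n\'e critical value to be infinite: once $c(g,\sigma)=\infty$ is known, Theorem~\ref{thm:theorem A}(2) applies with $c=\infty$ and produces, for almost every $k\in(0,\infty)$, a contractible closed orbit of the magnetic flow of energy $k$ --- which is precisely the assertion of the corollary. Thus everything reduces to the implication: if $\pi_1(M)$ is amenable and $\sigma$ is not exact, then $c(g,\sigma)=\infty$. Since, as recalled in the introduction, $c(g,\sigma)<\infty$ is equivalent to $\tilde\sigma$ admitting a bounded primitive, it suffices to show that under these hypotheses $\tilde\sigma$ has no bounded primitive.

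I would prove this last statement by contradiction, via an averaging argument over the deck group $G=\pi_1(M)$. Suppose $\theta\in\Omega^1(\tilde M)$ satisfies $d\theta=\tilde\sigma$ and $\|\theta\|_\infty:=\sup_{\tilde M}|\theta|_{\tilde g}<\infty$. The group $G$ acts on $(\tilde M,\tilde g)$ by isometries, and $\tilde\sigma$, being the pullback of $\sigma$, is $G$-invariant; hence for every $\gamma\in G$ the form $\gamma^{*}\theta$ is again a primitive of $\tilde\sigma$ with the same sup-norm. Using an invariant mean $m$ on $\ell^\infty(G)$ --- whose existence is exactly the amenability of $G$ --- one averages the uniformly bounded family $\{\gamma^{*}\theta\}_{\gamma\in G}$ to produce a bounded $1$-form $\bar\theta$ on $\tilde M$ which is $G$-invariant (by invariance of $m$) and still a primitive of $\tilde\sigma$. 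Being $G$-invariant, $\bar\theta$ descends to a $1$-form $\eta$ on $M$, and since exterior differentiation commutes with the covering projection one gets $d\eta=\sigma$. This contradicts $[\sigma]\neq 0$, so no such $\theta$ exists and $c(g,\sigma)=\infty$.

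The step that requires the most care --- and the only step with any real content --- is the averaging construction: because an invariant mean is merely finitely additive, one must check that $\bar\theta$, defined pointwise by applying $m$ to $v\mapsto(\gamma^{*}\theta)_x(v)$, is a genuine $C^1$ (indeed smooth) differential form and that it has differential $\tilde\sigma$. The natural way to do this is to work relative to a fundamental domain, replacing the forms $\gamma^{*}\theta$ by the local potentials $f_\gamma$ determined by $\gamma^{*}\theta-\theta=df_\gamma$ (which have uniformly bounded, hence equi-Lipschitz, differentials) and averaging those instead. In any event this is exactly the content of \cite[Corollary~5.4]{Paternain2006}, which is quoted in the introduction, so for the purposes of this paper the corollary really is an immediate corollary of Theorem~\ref{thm:theorem A}(2) and there is no substantive obstacle.
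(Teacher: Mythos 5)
Your proposal is correct and follows exactly the paper's route: reduce the corollary to the implication ``$\pi_1(M)$ amenable and $\sigma$ not exact $\Rightarrow$ $c(g,\sigma)=\infty$'' (quoted from \cite[Corollary 5.4]{Paternain2006}) and then invoke Theorem \ref{thm:theorem A}(2) with $c=\infty$. The additional sketch you give of the averaging argument behind the cited result is accurate but not needed for the paper's purposes, as you yourself note.
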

Let us now give a brief outline of our method of attack. Fix a primitive
$\theta$ of $\tilde{\sigma}$, and consider the Lagrangian $L:T\tilde{M}\rightarrow\mathbb{R}$
defined by \[
L(q,v):=\frac{1}{2}\left|v\right|^{2}-\theta_{q}(v).\]
The Euler-Lagrange flow of $L$ is precisely the lifted flow $\tilde{\phi}_{t}:T\tilde{M}\rightarrow T\tilde{M}$
of the magnetic flow $\phi_{t}:TM\rightarrow TM$ (see for example
\cite{ContrerasIturriaga1999}). Recall that the \emph{action} $A(y)$
of the Lagrangian $L$ over an absolutely continuous curve $y:[0,T]\rightarrow\tilde{M}$
is given by \begin{align*}
A(y): & =\int_{0}^{T}L(y(t),\dot{y}(t))dt.\\
 & =\int_{0}^{T}\frac{1}{2}\left|\dot{y}(t)\right|^{2}dt-\int_{y}\theta.\end{align*}
Set \[
A_{k}(y):=\int_{0}^{T}\left\{ L(y(t),\dot{y}(t))+k\right\} dt=A(y)+kT.\]
 A closed orbit of $\tilde{\phi}_{t}$ with energy $k$ can be realized
as a critical point of the functional $y\mapsto A_{k}(y)$. More precisely,
let $\Lambda_{\tilde{M}}$ denote the Hilbert manifold of absolutely
continuous curves $x:\mathbb{T}\rightarrow\tilde{M}$ and consider
$\tilde{S}_{k}:\Lambda_{\tilde{M}}\times\mathbb{R}^{+}\rightarrow\mathbb{R}$
defined by \begin{align*}
\tilde{S}_{k}(x,T): & =\int_{0}^{1}T\cdot L(x(t),\dot{x}(t)/T)dt+kT\\
 & =\int_{0}^{1}\frac{1}{2T}\left|\dot{x}(t)\right|^{2}dt+kT-\int_{x}\theta.\end{align*}
Then the pair $(x,T)$ is a critical point $\tilde{S}_{k}$ if and
only if $y(t):=x(t/T)$ is the projection to $\tilde{M}$ of a closed
orbit of $\tilde{\phi}_{t}$ with energy $k$ (see \cite{ContrerasIturriagaPaternainPaternain2000}).\newline

If $\sigma$ was actually exact then we could define $L$ on $TM$,
instead of just on $T\tilde{M}$. In this case it has been shown in
\cite{ContrerasIturriagaPaternainPaternain2000} that for $k>c(g,\sigma)$,
$\tilde{S}_{k}$ satisfies the Palais-Smale condition and is bounded
below. Standard results from Morse theory \cite[Corollary 23]{ContrerasIturriagaPaternainPaternain2000}
then tell us that $\tilde{S}_{k}$ admits a global minimum, and this
gives us our desired closed orbit. In \cite{Contreras2006} this was
extended to give contractible orbits on almost every energy level
below the critical value. Crucially however these results use compactness
of $M$, and hence are not applicable directly in the weakly exact
case, since then $L$ is defined only on $T\tilde{M}$.\newline 

In the weakly exact case, whilst $\tilde{S}_{k}$ is not well defined
on $TM$, its differential\emph{ }is. This leads to the key observation
of the present work that we can still work directly on $\Lambda_{M}$.
More precisely, we define a functional $S_{k}:\Lambda_{M}\times\mathbb{R}^{+}\rightarrow\mathbb{R}$
with%
\footnote{If $\tilde{\sigma}$ does not admit any bounded primitives $S_{k}$
is only defined on $\Lambda_{0}\times\mathbb{R}^{+}$, where $\Lambda_{0}\subseteq\Lambda_{M}$
is the subset of contractible loops.%
} the property that $(x,T)$ is a critical point of $S_{k}$ if and
only if a lift $\tilde{y}$ to $\tilde{M}$ of the curve $y(t):=x(t/T)$
is the projection to $\tilde{M}$ of a flow line of $\tilde{\phi}_{t}$
with energy $k$. The functional $S_{k}$ is given by \[
S_{k}(x,T):=\int_{0}^{T}\frac{1}{2T}\left|\dot{x}(t)\right|^{2}dt+kT-\int_{C(x)}\sigma,\]
where $C(x)$ is any cylinder with boundary $x(\mathbb{T})\cup x_{\nu}(\mathbb{T})$,
where $x_{\nu}\in\Lambda_{M}$ is some fixed reference loop in the
free homotopy class $\nu\in[\mathbb{T},M]$ that $x$ belongs to.
If $c(g,\sigma)<\infty$, then since $\sigma$ is weakly exact, the
value $\int_{C(x)}\sigma$ is independent of the choice of cylinder
$C(x)$ for any curve $x\in\Lambda_{M}$. In the case $c(g,\sigma)=\infty$,
the value $\int_{C(x)}\sigma$ is independent of the choice of cylinder
only when $x$ is a contractible loop.\newline

The functional $S_{k}$ allows one to extend other results previously
known only for the exact case to the weakly exact case. For instance,
in \cite{Merry2009} we will use $S_{k}$ to establish the short exact
sequence \cite{CieliebakFrauenfelderOancea2009,AbbondandoloSchwarz2009}
between the Rabinowitz Floer homology of a weakly exact twisted cotangent
bundle and the singular (co)homology of the free loop space.\newline 

\begin{flushleft}
\emph{Acknowledgment. }I would like to thank my Ph.D. adviser Gabriel
P. Paternain for many helpful discussions. 
\par\end{flushleft}

\section{\label{sec:Preliminaries}Preliminaries}

\subsection*{The setup}

$\ $\vspace{6 pt}

It will be convenient to view $M$ and $\tilde{M}$ as being embedded
isometrically in some $\mathbb{R}^{d}$ (possible by Nash's theorem).
We will be interested in various spaces of absolutely continuous curves. 

Firstly, given $q_{0},q_{1}\in M$ and $T\geq0$, let $C_{M}^{\textrm{ac}}(q_{0},q_{1};T)$
denote the set of absolutely continuous curves $y:[0,T]\rightarrow M$
with $y(0)=q_{0}$ and $y(T)=q_{1}$. Let \[
C_{M}^{\textrm{ac}}(q_{0},q_{1}):=\bigcup_{T\geq0}C_{M}^{\textrm{ac}}(q_{0},q_{1};T).\]
We can repeat the construction on $\tilde{M}$ to obtain for $q_{0},q_{1}\in\tilde{M}$
sets $C_{\tilde{M}}^{\textrm{ac}}(q_{0},q_{1};T)$ and $C_{\tilde{M}}^{\textrm{ac}}(q_{0},q_{1})$
of curves on $\tilde{M}$. 

Next, consider the space \[
W^{1,2}(\mathbb{R}^{d}):=\left\{ x:I\rightarrow\mathbb{R}^{m}\mbox{ absolutely continuous}\,:\,\int_{0}^{1}\left|\dot{x}(t)\right|^{2}dt<\infty\right\} ,\]
and \[
W^{1,2}(M):=\left\{ x\in W^{1,2}(\mathbb{R}^{d})\,:\, x(I)\subseteq M\right\} .\]
with $W^{1,2}(\tilde{M})$ defined similarly. Here $I:=[0,1]$, a
convention we shall follow throughout the paper.

Let $\Lambda_{\mathbb{R}^{d}}\subseteq W^{1,2}(\mathbb{R}^{d})$ denote
the set of closed loops of class $W^{1,2}$ on $\mathbb{R}^{d}$,
and let $\Lambda_{M}:=W^{1,2}(M)\cap\Lambda_{\mathbb{R}^{d}}$. We
will think of maps $x\in\Lambda_{M}$ as maps $x:\mathbb{T}\rightarrow M$
(here $\mathbb{T}=\mathbb{R}/\mathbb{Z}$, which we shall often identify
with $S^{1}$). Given a free homotopy class $\nu\in[\mathbb{T},M]$,
let $\Lambda_{\nu}\subseteq\Lambda_{M}$ denote the connected component
of $\Lambda_{M}$ consisting of the loops belonging to $\nu$. 

The tangent space to $\Lambda_{\mathbb{R}^{d}}$ at $x\in\Lambda_{\mathbb{R}^{d}}$
is given by\[
T_{x}\Lambda_{\mathbb{R}^{d}}=\left\{ \xi\in W^{1,2}(\mathbb{R}^{d})\,:\,\xi(0)=\xi(1)\right\} .\]
Given $(x,T)\in\Lambda_{M}\times\mathbb{R}^{+}$ we thus have \[
T_{(x,T)}(\Lambda_{M}\times\mathbb{R}^{+})=\left\{ (\xi,\psi)\in W^{1,2}(\mathbb{R}^{d})\times\mathbb{R}\,:\,\xi(0)=\xi(1)\right\} .\]

Let $\left\langle \cdot,\cdot\right\rangle $ denote the standard
Euclidean metric. The metric on $W^{1,2}(\mathbb{R}^{d})$ we will
work with is \[
\left\langle \xi,\zeta\right\rangle _{1,2}:=\left\langle \xi(0),\zeta(0)\right\rangle +\int_{0}^{1}\left\langle \dot{\xi}(t),\dot{\zeta}(t)\right\rangle dt.\]
This defines a metric which we shall denote simply by $\left\langle \cdot,\cdot\right\rangle $
on $W^{1,2}(\mathbb{R}^{d})\times\mathbb{R}^{+}$ by \begin{equation}
\left\langle (\xi,\psi),(\zeta,\chi)\right\rangle :=\left\langle \xi,\zeta\right\rangle _{1,2}+\psi\chi.\label{eq:standard metric}\end{equation}

\subsection*{Ma\~né's critical value}

$\ $\vspace{6 pt}

We now recall the definition of $c(g,\sigma)$, the \emph{critical
value }introduced by Ma\~né in \cite{Mane1996}, which plays a decisive
role in all that follows.\newline

Let us fix a primitive $\theta$ of $\tilde{\sigma}$. Given $k\in\mathbb{R}^{+}$,
we define $A_{k}$ as follows. Let $q_{0},q_{1}\in\tilde{M}$. Define
$A_{k}:C_{\tilde{M}}^{\textrm{ac}}(q_{0},q_{1})\rightarrow\mathbb{R}$
by \[
A_{k}(y):=\int_{0}^{T}\frac{1}{2}\left|\dot{y}(t)\right|^{2}+kT-\int_{y}\theta.\]
We define \emph{Ma\~né's action potential} $m_{k}:\tilde{M}\times\tilde{M}\rightarrow\mathbb{R}\cup\{-\infty\}$
by \[
m_{k}(q_{0},q_{1}):=\inf_{T>0}\inf_{y\in C_{\tilde{M}}^{\textrm{ac}}(q_{0},q_{1};T)}A_{k}(y).\]
Then we have the following result; for a proof see for instance \cite[Proposition 2-1.1]{ContrerasIturriaga1999}
for the first five statements, and \cite[Appendix A]{BurnsPaternain2002}
for a proof of the last statement.
\begin{lem}
Properties of $m_{k}$:
\begin{enumerate}
\item If $k\leq k'$ then $m_{k}(q_{0},q_{1})\leq m_{k'}(q_{0},q_{1})$
for all $q_{0},q_{1}\in\tilde{M}$.
\item For all $k\in\mathbb{R}$ and all $q_{0},q_{1},q_{2}\in\tilde{M}$
we have \[
m_{k}(q_{0},q_{2})\leq m_{k}(q_{0},q_{1})+m_{k}(q_{1},q_{2}).\]

\item Fix $k\in\mathbb{R}$. Then either $m_{k}(q_{0},q_{1})=-\infty$ for
all $q_{0},q_{1}\in\tilde{M}$, or $m_{k}(q_{0},q_{1})\in\mathbb{R}$
for all $q_{0},q_{1}\in\tilde{M}$ and $m_{k}(q,q)=0$ for all $q\in\tilde{M}$.
\item If \[
c(g,\sigma):=\inf\left\{ k\in\mathbb{R}\,:\, m_{k}(q_{0},q_{1})\in\mathbb{R}\mbox{ for all }q_{0},q_{1}\in\tilde{M}\right\} \]
then $m_{c(g,\sigma)}$ is finite everywhere.
\item We can alternatively define $c(g,\sigma)$ as follows:\emph{\begin{equation}
c(g,\sigma)=\inf_{u\in C^{\infty}(\tilde{M})}\sup_{q\in\tilde{M}}\frac{1}{2}\left|d_{q}u+\theta_{q}\right|^{2}.\label{eq:alt def of c}\end{equation}
}
\end{enumerate}
\end{lem}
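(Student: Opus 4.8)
The plan is to handle the five items in turn, since each leans on the previous ones. Item (1) I would dispatch immediately: for fixed $T>0$ and fixed $y\in C^{\textrm{ac}}_{\tilde M}(q_0,q_1;T)$ one has $A_k(y)=A_{k'}(y)+(k-k')T\le A_{k'}(y)$ when $k\le k'$, and passing to the double infimum over $T$ and $y$ preserves this. For item (2) the engine is additivity of the action under concatenation: if $y_1\in C^{\textrm{ac}}_{\tilde M}(q_0,q_1;T_1)$ and $y_2\in C^{\textrm{ac}}_{\tilde M}(q_1,q_2;T_2)$ then $y_1\ast y_2\in C^{\textrm{ac}}_{\tilde M}(q_0,q_2;T_1+T_2)$ with $A_k(y_1\ast y_2)=A_k(y_1)+A_k(y_2)$ (the kinetic term, the term $kT$ and the term $\int\theta$ each split over the two pieces), so taking the infimum over $(y_2,T_2)$ and then over $(y_1,T_1)$ yields $m_k(q_0,q_2)\le m_k(q_0,q_1)+m_k(q_1,q_2)$; here one needs $C^{\textrm{ac}}_{\tilde M}(p,q;T)\ne\varnothing$ for every $T>0$, which holds because $\tilde M$ is connected. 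For item (3) I would first observe $m_k(q_0,q_1)<+\infty$ always, so $m_k$ is $(\mathbb R\cup\{-\infty\})$-valued; if $m_k(\bar q_0,\bar q_1)=-\infty$ for one pair, then for every $q_0,q_1$ item (2) gives $m_k(q_0,q_1)\le m_k(q_0,\bar q_0)+m_k(\bar q_0,\bar q_1)+m_k(\bar q_1,q_1)=-\infty$, proving the dichotomy; in the complementary case $m_k$ is real-valued, testing $A_k$ against the constant curve $y\equiv q$ on $[0,T]$ gives $A_k(y)=kT$ so $m_k(q,q)\le\inf_{T>0}kT=0$, while $m_k(q,q)\le 2m_k(q,q)$ (item (2) with $q_0=q_1=q$) forces $m_k(q,q)\ge0$, whence $m_k(q,q)=0$.

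Items (5) and (4) are the real content. Writing $c':=\inf_{u\in C^\infty(\tilde M)}\sup_q\tfrac12|d_qu+\theta_q|^2$, for $c\le c'$ I would argue: given $u$ with $K:=\sup_q\tfrac12|d_qu+\theta_q|^2<\infty$, rewrite $\int_y\theta=\int_y(\theta+du)-\bigl(u(q_1)-u(q_0)\bigr)$ and estimate $\int_y(\theta+du)\le\sqrt{2K}\int_0^T|\dot y|\le\int_0^T\bigl(\tfrac12|\dot y|^2+K\bigr)$ via Young's inequality $\tfrac12a^2+K\ge\sqrt{2K}\,a$; this gives $A_K(y)\ge u(q_1)-u(q_0)$ for all $y\in C^{\textrm{ac}}_{\tilde M}(q_0,q_1;T)$, hence $m_K(q_0,q_1)\ge u(q_1)-u(q_0)>-\infty$ for all $q_0,q_1$, so $m_K$ is finite everywhere, $c\le K$, and the infimum over $u$ gives $c\le c'$. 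For the reverse inequality I would fix $k>c$ and a basepoint $q_\ast$ and set $u:=m_k(q_\ast,\cdot)$, a real-valued function (since $k>c$) that is subadditive by item (2); joining $q$ to a nearby point by a speed-$\sqrt{2k}$ minimising geodesic and using the triangle inequality in both directions shows $u$ is locally Lipschitz with $|d_qu+\theta_q|\le\sqrt{2k}$ for a.e.\ $q$, and mollifying $u$ on $\tilde M$ produces smooth $u_\varepsilon$ with $\sup_q\tfrac12|d_qu_\varepsilon+\theta_q|^2\le k+\varepsilon$, so $c'\le k$ and then $c'\le c$. Item (4) I would then obtain by running this along $k_n\downarrow c$ with $u_n:=m_{k_n}(q_\ast,\cdot)-m_{k_n}(q_\ast,q_\ast)$: the uniform bounds $|d_qu_n+\theta_q|\le\sqrt{2k_n}$ make $\{u_n\}$ equi-Lipschitz on compacta, so a subsequence converges locally uniformly (Arzel\`{a}--Ascoli) to a locally Lipschitz $u_\infty$ with $|du_\infty+\theta|\le\sqrt{2c}$ almost everywhere, and the computation from the $c\le c'$ step --- which only needs $u_\infty$ Lipschitz --- gives $m_c(q_0,q_1)\ge u_\infty(q_1)-u_\infty(q_0)>-\infty$, so $m_c$ is finite everywhere; alternatively, this last point is \cite[Appendix A]{BurnsPaternain2002}.

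\textbf{The main obstacle} will be the reverse inequality in item (5) and, inseparably, item (4). On the non-compact cover $\tilde M$ the infimum defining $c'$ need not be attained, so one cannot just take a minimising potential at $k=c$; the way around is the two-step scheme above --- first manufacture, for each $k>c$, the Lipschitz ``potential'' $m_k(q_\ast,\cdot)$ carrying the sharp a.e.\ bound $|du+\theta|\le\sqrt{2k}$ (which needs the short-geodesic estimate and genuinely relies on $m_k$ being finite, i.e.\ on item (3)), and then pass to the limit $k\downarrow c$ by an Arzel\`{a}--Ascoli argument over an exhaustion of $\tilde M$ by compacta. The leftover technical points --- that mollification on a non-compact manifold can be set up to preserve the gradient bound up to arbitrarily small error, and that the fundamental theorem of calculus applies to the composition of a locally Lipschitz function with a $W^{1,2}$ curve --- are routine but still need to be checked.
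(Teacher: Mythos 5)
The paper itself gives no proof of this lemma; it merely cites \cite[Proposition 2-1.1]{ContrerasIturriaga1999} and \cite[Appendix A]{BurnsPaternain2002}, so there is nothing internal to compare against. Your arguments for items (1)--(3) are correct and standard. Your outline for item (5) is the right one (a smooth $u$ with $K=\sup_q\tfrac12|d_qu+\theta_q|^2<\infty$ bounds $A_K$ below via Young's inequality, so $c\le K$; conversely $m_k(q_*,\cdot)$ for $k>c$ is a Lipschitz subsolution, and smoothing it shows $c'\le k$), and you are right that the mollification is the technical pressure point; it goes through because $\tilde M$, being a cover of a compact manifold, has bounded geometry, and $\theta$ may be taken bounded together with $d\theta=\tilde\sigma$, which is a pullback from the compact base, so a uniform mollifier keeps $\sup_q\tfrac12|du_\varepsilon+\theta|^2\le k+\varepsilon$.

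The one point where the writeup understates the difficulty is the last sentence, about item (4). What you invoke there is not the fundamental theorem of calculus --- that part \emph{is} automatic for a Lipschitz function composed with an AC curve --- but the finer fact that an a.e.\ \emph{spatial} bound $|du_\infty+\theta|\le\sqrt{2c}$ promotes to the a.e.\ \emph{pathwise} bound $\bigl|(u_\infty\circ y)'(t)+\theta_{y(t)}(\dot y(t))\bigr|\le\sqrt{2c}\,|\dot y(t)|$ along an arbitrary AC curve $y$. This is not immediate: the image of $y$ has measure zero, so $u_\infty$ need not be differentiable at a single point of it, and $(u_\infty\circ y)'$ cannot be identified with $d_{y(t)}u_\infty(\dot y(t))$. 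The honest justification runs through exactly the same mollification as in item (5) (bound $|(u_\delta\circ y)'+\theta(\dot y)|$, integrate, let $\delta\to0$, then differentiate), so item (4) as you have set it up is not cheaper than item (5). You can sidestep the issue entirely. Since $m_{k_n}$ is real and subadditive, you already have $m_{k_n}(q_0,q_1)\ge u_n(q_1)-u_n(q_0)$ with no regularity needed, and $m_c(q_0,q_1)=\lim_{k\downarrow c}m_k(q_0,q_1)$ follows directly from the infimum definition (monotonicity gives $\ge$; for $\le$, test any fixed $(y,T)$ and note $A_k(y)=A_c(y)+(k-c)T\to A_c(y)$). Even shorter is to avoid $u_n$ altogether: $0=m_k(q_0,q_0)\le m_k(q_0,q_1)+m_k(q_1,q_0)$ plus the speed-$\sqrt{2k}$ geodesic test curve give $m_k(q_0,q_1)\ge -\bigl(\sqrt{2k}+\|\theta\|_\infty\bigr)\mathrm{dist}(q_0,q_1)$ uniformly as $k\downarrow c$, and the limit is then finite.
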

We call the number\emph{ }\textbf{\emph{$c(g,\sigma)$}} the \emph{Ma\~né
critical value}. Using \eqref{eq:alt def of c} it is clear that $c(g,\sigma)<\infty$
if and only if $\theta$ is bounded, that is if\begin{equation}
\left\Vert \theta\right\Vert _{\infty}:=\sup_{q\in\tilde{M}}\left|\theta_{q}\right|<\infty.\label{eq:better bound}\end{equation}

\subsection*{The functional $S_{k}$}

$\ $\vspace{6 pt}

We will now define a second functional $S_{k}$, which will be the
main object of study of the present work. In the case $c(g,\sigma)<\infty$,
$S_{k}$ is defined on $\Lambda_{M}\times\mathbb{R}^{+}$. For $c(g,\sigma)=\infty$,
$S_{k}$ is only defined on $\Lambda_{0}\times\mathbb{R}^{+}$. The
following lemma is the key observation required to define $S_{k}$.
In the statement, $\mathbb{T}^{2}$ denotes the $2$-torus.
\begin{lem}
\emph{\label{lem:key observation}}Suppose $c(g,\sigma)<\infty$.
Then for any smooth map $f:\mathbb{T}^{2}\rightarrow M$, $f^{*}\sigma$
is exact.\end{lem}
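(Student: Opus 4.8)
The plan is to reduce the statement to a purely cohomological fact and then exploit that $c(g,\sigma)<\infty$ forces $\|\theta\|_{\infty}<\infty$. Since $f^{*}\sigma$ is automatically closed and $H^{2}_{\mathrm{dR}}(\mathbb{T}^{2})\cong\mathbb{R}$ with the isomorphism given by integration over the fundamental class, $f^{*}\sigma$ is exact if and only if $\int_{\mathbb{T}^{2}}f^{*}\sigma=0$. So the whole lemma comes down to proving that this integral vanishes.

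To do this I would write $\mathbb{T}^{2}=\mathbb{R}^{2}/\mathbb{Z}^{2}$, let $q\colon\mathbb{R}^{2}\to\mathbb{T}^{2}$ be the quotient map, and set $\bar{F}:=f\circ q\colon\mathbb{R}^{2}\to M$, a smooth $\mathbb{Z}^{2}$-periodic map. Since $\mathbb{R}^{2}$ is simply connected, $\bar{F}$ lifts through the universal covering $p\colon\tilde{M}\to M$ to a smooth map $\tilde{F}\colon\mathbb{R}^{2}\to\tilde{M}$ with $\bar{F}^{*}\sigma=\tilde{F}^{*}\tilde{\sigma}$ and, $p$ being a local isometry, $|d\tilde{F}|=|d\bar{F}|$; by periodicity and smoothness $C:=\sup_{\mathbb{R}^{2}}\bigl(|\partial_{s}\bar{F}|+|\partial_{t}\bar{F}|\bigr)<\infty$. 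Now fix a primitive $\theta$ of $\tilde{\sigma}$, which is bounded since $c(g,\sigma)<\infty$ (see \eqref{eq:better bound}). On one hand, $\bar{F}^{*}\sigma$ is a $\mathbb{Z}^{2}$-periodic $2$-form on $\mathbb{R}^{2}$ whose integral over any unit square equals $c_{0}:=\int_{\mathbb{T}^{2}}f^{*}\sigma$, so $\int_{[0,N]^{2}}\bar{F}^{*}\sigma=N^{2}c_{0}$ for every $N\in\mathbb{N}$. On the other hand, since $\tilde{\sigma}=d\theta$, Stokes' theorem gives $\int_{[0,N]^{2}}\tilde{F}^{*}\tilde{\sigma}=\int_{\partial[0,N]^{2}}\tilde{F}^{*}\theta$, and the right-hand side is at most $4N\|\theta\|_{\infty}C$ in absolute value, because $\partial[0,N]^{2}$ is a union of four edges of parameter-length $N$ along which the integrand $|\theta(\partial_{s}\tilde{F})|$, $|\theta(\partial_{t}\tilde{F})|$ is pointwise bounded by $\|\theta\|_{\infty}C$. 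Combining, $|c_{0}|\le 4\|\theta\|_{\infty}C/N$ for all $N$, hence $c_{0}=0$, which finishes the proof.

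The argument is short, and the only point that is not immediate is this last comparison. Applying Stokes on the \emph{unit} square $[0,1]^{2}$ only bounds $|c_{0}|$; the boundary term need not vanish, since $\theta$ is not invariant under the deck transformations corresponding to $f_{*}(\pi_{1}\mathbb{T}^{2})$ (when it is — e.g.\ when $\sigma$ itself is exact and $\theta$ is pulled back from $M$ — opposite edges cancel and one gets $c_{0}=0$ at once). The fix is to integrate over the large square $[0,N]^{2}$, whose enclosed area grows like $N^{2}$ while its perimeter grows only like $N$; this quadratic-versus-linear discrepancy, together with the boundedness of $\theta$, is exactly what forces $c_{0}=0$. (This also shows the hypothesis $c(g,\sigma)<\infty$ is essential: for $M=\mathbb{T}^{2}$ equipped with an area form one has $c=\infty$, $\theta$ unbounded, and $\int_{\mathbb{T}^{2}}\sigma\ne0$.)
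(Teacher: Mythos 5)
Your proof is correct, and it takes a genuinely different route from the paper's. The paper observes that $G:=f_{*}(\pi_{1}(\mathbb{T}^{2}))$ is amenable (a quotient of $\mathbb{Z}^{2}$) and, since $\left\Vert\theta\right\Vert_{\infty}<\infty$, invokes \cite[Lemma 5.3]{Paternain2006} to produce a $G$-invariant primitive of $\tilde{\sigma}$, which descends through the intermediate cover to a primitive of $f^{*}\sigma$ on $\mathbb{T}^{2}$. Your argument is more elementary and self-contained: you reduce to showing $\int_{\mathbb{T}^{2}}f^{*}\sigma=0$, lift $f\circ q$ to $\tilde{F}\colon\mathbb{R}^{2}\to\tilde{M}$ with uniformly bounded derivative, and play the quadratic growth of the area term $N^{2}c_{0}$ against the linear growth $4N\left\Vert\theta\right\Vert_{\infty}C$ of the Stokes boundary term over $[0,N]^{2}$. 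The two proofs are related at a deeper level: amenability of $\mathbb{Z}^{2}$ is equivalent to the existence of a F{\o}lner sequence, and the squares $[0,N]^{2}$ form precisely such a sequence, so your perimeter-to-area estimate is the F{\o}lner condition made concrete for this group. What the paper's route buys is reuse --- the same amenability lemma from Paternain is invoked again later (e.g.\ in the computation of $\int_{C(x)}\sigma$ for non-contractible loops), so appealing to it here is economical; what your route buys is independence from that external reference, a transparent quantitative bound, and, as you observe, a clear view of exactly why the hypothesis $c(g,\sigma)<\infty$ cannot be dropped.
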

\begin{proof}
Consider $G:=f_{*}(\pi_{1}(\mathbb{T}^{2}))\leq\pi_{1}(M).$ Then
$G$ is amenable, since $\pi_{1}(\mathbb{T}^{2})=\mathbb{Z}^{2}$,
which is amenable. Then \cite[Lemma 5.3]{Paternain2006} tells us
that since $\left\Vert \theta\right\Vert _{\infty}<\infty$ we can
replace $\theta$ by a $G$-invariant primitive $\theta'$ of $\tilde{\sigma}$,
which descends to define a primitive $\theta''\in\Omega^{1}(\mathbb{T}^{2})$
of $f^{*}\sigma$.
\end{proof}
For each free homotopy class $\nu\in[\mathbb{T},M]$, pick a reference
loop $x_{\nu}\in\Lambda_{\nu}$. Given any $x\in\Lambda_{\nu}$, let
$C(x)$ denote a cylinder with boundary $x(\mathbb{T})\cup x_{\nu}(\mathbb{T})$. 

Define $S_{k}:\Lambda_{\nu}\times\mathbb{R}^{+}\rightarrow\mathbb{R}$
by \[
S_{k}(x,T):=\int_{0}^{1}\frac{1}{2T}\left|\dot{x}(t)\right|^{2}dt+kT-\int_{C(x)}\sigma,\]

This is well defined because the value of $\int_{C(x)}\sigma$ is
\emph{independent of the choice of cylinder}: if $C'(x)$ is another
cylinder with boundary $x(\mathbb{T})\cup x_{\nu}(\mathbb{T})$ then
$\mathbb{T}^{2}(x):=C(x)\cup\overline{C'(x)}$ is a torus (where $\overline{C'(x)}$
denotes the cylinder $C'(x)$ taken with the opposite orientation),
and $\int_{\mathbb{T}^{2}(x)}\sigma=0$ as $\sigma|_{\mathbb{T}^{2}(x)}$
is exact by the previous lemma.\newline

If $c(g,\sigma)=\infty$ then we cannot define $S_{k}$ on all of
$\Lambda_{M}\times\mathbb{R}^{+}$, since in this case Lemma \ref{lem:key observation}
fails. It is however well defined on $\Lambda_{0}\times\mathbb{R}^{+}$.
In order to see why, let us consider the case of contractible loops
when $c(g,\sigma)<\infty$ again. If $x:\mathbb{T}\rightarrow M$
is contractible and $\boldsymbol{x}:D^{2}\rightarrow M$ denotes a
capping disc, so that $\boldsymbol{x}|_{\partial D^{2}}=x$, then
it is easy to see that \begin{equation}
\int_{C(x)}\sigma=\int_{D^{2}}\boldsymbol{x}^{*}\sigma;\label{eq:capping disc}\end{equation}
note that the right-hand side is (as it should be) independent of
the choice of capping disc $\boldsymbol{x}$, and depends only on
$x$ and $\sigma$, since $\sigma|_{\pi_{2}(M)}=0$. Moreover the
right-hand side is well defined and depends only on $x$ and $\sigma$
even when $c(g,\sigma)=\infty$. Thus it makes sense to \emph{define
}$S_{k}|_{\Lambda_{0}\times\mathbb{R}^{+}}$ by \[
S_{k}(x,T)=\int_{0}^{1}\frac{1}{2T}\left|\dot{x}(t)\right|^{2}dt+kT-\int_{D^{2}}\boldsymbol{x}^{*}\sigma;\]
this is consistent with the previous definition of $S_{k}|_{\Lambda_{0}\times\mathbb{R}^{+}}$
when $c(g,\sigma)<\infty$.\newline

Next we will explicitly calculate the derivative of $S_{k}$. Let
$(x_{s},T_{s})$ be a variation of $(x,T)$, with $\xi(t):=\frac{\partial}{\partial s}\bigl|_{s=0}x_{s}(t)$
and $\psi:=\frac{\partial}{\partial s}\bigl|_{s=0}T_{s}$. Write $E_{q}$
and $E_{v}$ for $\frac{\partial E}{\partial q}$ and $\frac{\partial E}{\partial v}$
respectively. Then an easy calculation in local coordinates show that
the first variation (i.e. the Gateaux derivative)\emph{ }of $S_{k}$
at $(\xi,\psi)$, that is, $\frac{\partial}{\partial s}\bigl|_{s=0}S_{k}(x_{s},T_{s})$
is given by:\begin{align}
\frac{\partial}{\partial s}\bigl|_{s=0}S_{k}(x_{s},T_{s}) & =\psi\int_{0}^{1}\left\{ k-E(x(t),\dot{x}(t)/T)\right\} dt+\int_{0}^{1}\sigma_{x(t)}(\xi(t),\dot{x}(t))dt\nonumber \\
 & +\int_{0}^{1}\left\{ T\cdot E_{q}(x(t),\dot{x}(t)/T)\cdot\xi(t)+E_{v}(x(t),\dot{x}(t)/T)\cdot\dot{\xi}(t)\right\} dt.\label{eq:local coordinates}\end{align}
 We claim now that $S_{k}$ is differentiable with respect to the
canonical Hilbert manifold structure of $\Lambda_{\nu}\times\mathbb{R}^{+}$
(i.e. $S_{k}$ is Fréchet differentiable). In fact, $S_{k}$ is of
class $C^{2}$. For this we quote the fact that \[
(x,T)\mapsto\int_{0}^{1}\frac{1}{2T}\left|\dot{x}(t)\right|^{2}dt+kT\]
is of class $C^{2}$ (see for instance \cite{AbbondandoloSchwarz2008})
and thus is remains to check that $x\mapsto\int_{C(x)}\sigma$ is
differentiable. This can be checked directly. It thus follows that
the first variation $\frac{\partial}{\partial s}\bigl|_{s=0}S_{k}(x_{s},T_{s})$
is actually equal to the (Fréchet) derivative $d_{(x,T)}S_{k}(\xi,\psi)$.\newline

Finally let us note that \begin{equation}
\frac{\partial}{\partial T}S_{k}(x,T)=\frac{1}{T}\int_{0}^{T}\{k-2E(y,\dot{y})\}dt,\label{eq:d/dT}\end{equation}
where $y(t):=x(t/T)$.

\subsection*{Relating $S_{k}$ and $A_{k}$}

$\ $\vspace{6 pt}

Next, if $(x,T)$ is a critical point of $S_{k}$ then $y(t):=x(t/T)$
satisfies\[
\int_{0}^{T}\left\{ E_{q}(y,\dot{y})-\frac{d}{dt}E_{v}(y,\dot{y})\right\} \zeta dt-\frac{1}{T}\int_{0}^{T}\sigma_{y}(\zeta,\dot{y})dt=0,\]
where $\zeta(t)=\xi(t/T)$. Since this holds for all variations $\zeta$,
this implies that if $\tilde{y}:[0,T]\rightarrow\tilde{M}$ is a lift
of $y$ then $\tilde{y}$ satisfies the Euler-Lagrange equations for
$L$, that is,\[
L_{q}(\tilde{y},\dot{\tilde{y}})-\frac{d}{dt}L_{v}(\tilde{y},\dot{\tilde{y}})=0.\]
Thus $\tilde{y}$ is the lift to $\tilde{M}$ of the projection to
$M$ of an orbit of $\phi_{t}$, and we have the following result.
\begin{cor}
\label{cor:relating crit points}Let $x\in\Lambda_{M}$ and $\tilde{x}$
denote a lift of $x$ to $\tilde{M}$. Let $T\in\mathbb{R}^{+}$.
Define $\tilde{y}(t):=\tilde{x}(t/T)$. Then the following are equivalent:
\begin{enumerate}
\item The pair $(x,T)$ is a critical point of $S_{k}$,
\item $\tilde{y}$ is a critical point of $A_{k}$.
\end{enumerate}
Thus the pair $(x,T)\in\Lambda_{M}\times\mathbb{R}^{+}$ is a critical
point of $S_{k}$ if and only if $t\mapsto x(t/T)$ is the projection
to $M$ of a closed orbit of $\phi_{t}$.
\end{cor}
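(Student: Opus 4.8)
The plan is to establish Corollary \ref{cor:relating crit points} by computing the first variations of both functionals and matching them term by term. The paper has already done most of the work: equation \eqref{eq:local coordinates} gives the first variation of $S_k$, and the discussion immediately following Corollary \ref{cor:relating crit points}'s statement has reduced the condition ``$(x,T)$ is a critical point of $S_k$'' to the statement that $y(t):=x(t/T)$ satisfies the integral identity
\[
\int_{0}^{T}\left\{ E_{q}(y,\dot{y})-\frac{d}{dt}E_{v}(y,\dot{y})\right\} \zeta \, dt-\frac{1}{T}\int_{0}^{T}\sigma_{y}(\zeta,\dot{y})\, dt=0
\]
for all variations $\zeta$, together with the vanishing of the $\partial/\partial T$ term, which by \eqref{eq:d/dT} is precisely the condition that $y$ have constant energy equal to $k$. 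So the first thing I would do is observe that this pair of conditions is literally the (weak, hence by elliptic regularity strong) Euler--Lagrange equation for $L$ evaluated on a lift $\tilde y$ of $y$, since $\theta$ is a primitive of $\tilde\sigma$ and therefore $d\theta_q(\zeta,\dot{\tilde y}) = \tilde\sigma_q(\zeta,\dot{\tilde y}) = \sigma_{y}(\zeta,\dot y)$ upstairs — the one-form term $\theta$ in $L$ contributes exactly the magnetic term. This identifies critical points of $S_k$ with solutions of the Euler--Lagrange flow of $L$ on $\tilde M$ having energy $k$.

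Next I would invoke the standard fact, cited in the text from \cite{ContrerasIturriagaPaternainPaternain2000}, that a curve $\tilde y:[0,T]\to\tilde M$ is a critical point of $A_k$ (over the space $C^{\textrm{ac}}_{\tilde M}(\tilde y(0),\tilde y(T);\cdot)$ with free endpoints and free period $T$) if and only if it solves the Euler--Lagrange equations of $L$ and has energy $k$; the freedom in the period $T$ is exactly what forces the energy to equal $k$, matching the role of \eqref{eq:d/dT}. This gives the equivalence of (1) and (2). For the final sentence of the statement, I would then recall that the Euler--Lagrange flow of $L$ on $T\tilde M$ is the lifted magnetic flow $\tilde\phi_t$ (stated in the introduction, see \cite{ContrerasIturriaga1999}), so that $\tilde y$ being a solution of energy $k$ means $t\mapsto \tilde y(t)$ is the base projection of a $\tilde\phi_t$-orbit of energy $k$; projecting down to $M$, and using that $\tilde y(t)=\tilde x(t/T)$ is a lift of $x(t/T)$, gives that $t\mapsto x(t/T)$ is the projection to $M$ of a closed orbit of $\phi_t$ with energy $k$ (closed because $x\in\Lambda_M$ is a loop, even though its lift $\tilde x$ need not close up). Conversely, any closed $\phi_t$-orbit of energy $k$ projects to a loop $x$ which, after reparametrising by its period $T$, gives a critical point of $S_k$ by running the argument backwards.

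The only genuine subtlety — and the step I expect to need the most care — is the passage between the ``weak'' form of the Euler--Lagrange equation (the integral identity, valid for all $W^{1,2}$ variations $\zeta$ with $\zeta(0)=\zeta(1)$) and the pointwise Euler--Lagrange ODE, i.e. regularity of critical points: a priori $x\in W^{1,2}$, and one must bootstrap to conclude $x$ (equivalently $y$, $\tilde y$) is smooth so that $E_v(\tilde y,\dot{\tilde y})$ is differentiable in $t$ and the flow interpretation makes sense. This is entirely standard for the quadratic-plus-linear Lagrangian $L$ (the Legendre transform is a diffeomorphism since $L$ is fibrewise strictly convex), and I would simply cite the relevant regularity result rather than reproving it. A secondary point worth a sentence is checking that the periodic boundary term in the integration by parts vanishes: since $\xi(0)=\xi(1)$ and $\tilde y$ closes up modulo the deck transformation corresponding to the free homotopy class, $E_v(\tilde y,\dot{\tilde y})\cdot\xi$ is periodic, so no boundary contribution survives. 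Everything else is bookkeeping with the change of variables $t\leftrightarrow t/T$ and $\zeta(t)=\xi(t/T)$, which merely matches the $[0,1]$-parametrisation used for $S_k$ with the $[0,T]$-parametrisation used for $A_k$.
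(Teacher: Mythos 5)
Your argument mirrors the paper's own: compute the first variation of $S_k$ via equation \eqref{eq:local coordinates}, read off the weak Euler--Lagrange equation together with the vanishing of the $\partial/\partial T$ term from \eqref{eq:d/dT} (which fixes the energy to $k$), identify this with the Euler--Lagrange equation of the lifted Lagrangian $L$ on $\tilde M$ using that $\theta$ is a primitive of $\tilde\sigma$, and appeal to the cited fact from \cite{ContrerasIturriagaPaternainPaternain2000} that the Euler--Lagrange flow of $L$ is the lifted magnetic flow $\tilde\phi_t$. The regularity bootstrap and the vanishing of the boundary term under periodic variations, which you flag explicitly, are correct and worth noting but are taken for granted in the paper.
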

In order to specify the lifts we work with, let us fix a lift $\tilde{x}_{\nu}:I\rightarrow\tilde{M}$
of $x_{\nu}$ for each $\nu\in[\mathbb{T},M]$. We remark here that
throughout the paper given any two paths $y,y'$ such that the end
point of $y$ is the start point of $y'$, the path $y*y'$ is the
path obtained by first going along $y$ and then going along $y'$.
Similarly the path $y^{-1}$ is the path obtained by going along $y$
backwards.\newline 

Suppose now that $c(g,\sigma)<\infty$. Fix a free homotopy class
$\nu\in[\mathbb{T},M]$ (which could be the trivial free homotopy
class). Let $x\in\Lambda_{\nu}$, and let $x_{s}$ denote a free homotopy
from $x_{0}=x$ to $x_{1}=x_{\nu}$. Let $z(s):=x_{s}(0)$. Let $\tilde{x}_{s}$
denote the unique homotopy of curves on $\tilde{M}$ that projects
down onto $x_{s}$ and satisfies with $\tilde{x}_{1}(t)=\tilde{x}_{\nu}(t)$.
Let $\tilde{x}(t):=\tilde{x}_{0}(t)$, $\tilde{z}_{0}(s):=\tilde{x}_{s}(0)$
and $\tilde{z}_{1}(s):=\tilde{x}_{s}(1)$. 

Now observe that if $R\subseteq\tilde{M}$ denotes the rectangle $R=\mbox{im}\,\tilde{x}_{s}$
then we have \[
\int_{C(x)}\sigma=\int_{R}\tilde{\sigma}=\int_{R}d\theta=\int_{\partial R}\theta=\int_{\tilde{x}*\tilde{z}_{1}*\tilde{x}_{\nu}^{-1}*\tilde{z}_{0}^{-1}}\theta.\]
Let $\varphi\in\pi_{1}(M)$ denotes the unique covering transformation
taking $\tilde{z}_{0}$ to $\tilde{z}_{1}$. Since $\left\langle \varphi\right\rangle \leq\pi_{1}(M)$
is an amenable subgroup, \cite[Lemma 5.3]{Paternain2006} allows us
to assume that without loss of generality, $\theta$ is $\varphi$-invariant.
Thus \[
\int_{\tilde{z}_{0}^{-1}}\theta+\int_{\tilde{z}_{1}}\theta=0.\]
It thus follows that \[
\int_{C(x)}\sigma=\int_{\tilde{x}}\theta+\int_{\tilde{x}_{\nu}^{-1}}\theta.\]

Set \[
a_{\nu}:=\int_{\tilde{x}_{\nu}^{-1}}\theta.\]
We conclude: \[
\int_{C(x)}\sigma=\int_{\tilde{x}}\theta+a_{\nu}.\]
This computation shows if $c(g,\sigma)<\infty$ then for any $(x,T)\in\Lambda_{\nu}\times\mathbb{R}^{+}$,
if $\tilde{x}$ is any lift of $x$ and $\tilde{y}(t):=\tilde{x}(t/T)$
then we have \begin{equation}
S_{k}(x,T)=A_{k}(\tilde{y})+a_{\nu}.\label{eq:Sk and Ak}\end{equation}
For the case $\nu=0\in[\mathbb{T},M]$ the trivial free homotopy class,
we may choose above the curve $x_{0}$ to be a constant map, from
which it is easy to see that $a_{0}=0$. In particular, if $(x,T)\in\Lambda_{0}\times\mathbb{R}^{+}$
and $\tilde{y}$ is defined as before then \begin{equation}
S_{k}(x,T)=A_{k}(\tilde{y}).\label{eq:contractibl}\end{equation}
Finally, if $c(g,\sigma)=\infty$, $S_{k}$ is only defined on $\Lambda_{0}\times\mathbb{R}^{+}$,
and it is clear that \eqref{eq:contractibl} still holds.

\section{The Palais-Smale condition }

Let $(\mathcal{M},\left\langle \cdot,\cdot\right\rangle )$ be a Riemannian
Hilbert manifold, and let $S:\mathcal{M}\rightarrow\mathbb{R}$ be
of class $C^{1}$. 
\begin{defn}
We say that $S$ satisfies the \emph{Palais-Smale condition }if every
sequence $(x_{n})\subseteq\mathcal{M}$ such that $\left\Vert d_{x_{n}}S\right\Vert \rightarrow0$
as $n\rightarrow\infty$ and $\sup_{n}S(x_{n})<\infty$ admits a convergent
subsequence. We say that $S$ satisfies the \emph{Palais-Smale condition
at the level $\mu\in\mathbb{R}$} if every sequence $(x_{n})\subseteq\mathcal{M}$
with $\left\Vert d_{x_{n}}S\right\Vert \rightarrow0$ as $n\rightarrow\infty$
and $S(x_{n})\rightarrow\mu$ admits a convergent subsequence.
\end{defn}
In this section we will prove the following result about when the
functional $S_{k}$ satisfies the Palais-Smale condition. This result
is adapted from \cite[Proposition 3.8, Proposition 3.12]{Contreras2006}
We will first consider only the case where $c(g,\sigma)<\infty$ (see
Proposition \ref{thm:(the-key-theorem new)} below for the case $c(g,\sigma)=\infty$).
In the statement of the theorem, $\left\Vert \cdot\right\Vert $ denotes
the operator norm with respect to the metric $\left\langle \cdot,\cdot\right\rangle $.
\begin{thm}
\label{thm:the Palais Smale theorem}Suppose $c(g,\sigma)<\infty$.
Let $A,B,k\in\mathbb{R}^{+}$, and suppose $(x_{n},T_{n})\subseteq\Lambda_{M}\times\mathbb{R}^{+}$
satisfies:\[
\sup_{n}\left|S_{k}(x_{n},T_{n})\right|\leq A,\ \ \ \sup_{n}T_{n}\leq B,\ \ \ \left\Vert d_{(x_{n},T_{n})}S_{k}\right\Vert <\frac{1}{n}.\]
Then if:
\begin{enumerate}
\item If $\liminf T_{n}>0$ then passing to a subsequence if necessary the
sequence $(x_{n},T_{n})$ is convergent in the $W^{1,2}$-topology.
\item If $\liminf T_{n}=0$ and the $x_{n}$ are all contractible, passing
to a subsequence if necessary it holds that $S_{k}(x_{n},T_{n})\rightarrow0$.
\end{enumerate}
\end{thm}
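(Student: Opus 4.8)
The plan is to adapt the standard argument of Contreras (\cite[Propositions 3.8, 3.12]{Contreras2006}) using the identity \eqref{eq:Sk and Ak}, which allows us to replace $S_k$ on a fixed component $\Lambda_\nu$ by the Lagrangian action $A_k$ of the lifted curve $\tilde y(t)=\tilde x(t/T)$ up to the additive constant $a_\nu$; in particular $\|d_{(x_n,T_n)}S_k\|\to 0$ translates into control on $\dot x_n$ and on the Euler--Lagrange defect of $\tilde y_n$. Throughout, write $y_n(t):=x_n(t/T_n)$ and use \eqref{eq:d/dT}, namely $\partial_T S_k(x,T)=\tfrac1T\int_0^T\{k-2E(y,\dot y)\}\,dt$, together with the energy (first) term in \eqref{eq:local coordinates} to extract a uniform bound on $\int_0^1\tfrac1{2T_n}|\dot x_n|^2\,dt$. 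Indeed, since $|S_k(x_n,T_n)|\le A$, $T_n\le B$, and the ``$\psi$-component'' of $d S_k$ is small, testing $dS_k$ against $(\,0,T_n)$ and combining with the bound on $S_k$ yields $\int_0^1\tfrac1{2T_n}|\dot x_n|^2\,dt\le C$ for a constant $C=C(A,B,k)$.

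For part (1), assume $\liminf T_n>0$, so after passing to a subsequence $T_n\to T_\infty\in(0,B]$. Then the bound just obtained gives $\int_0^1|\dot x_n|^2\,dt\le 2B\,C$, so $(x_n)$ is bounded in $W^{1,2}$; by Rellich it has a subsequence converging uniformly (in $C^0$) and weakly in $W^{1,2}$ to some $x_\infty\in\Lambda_\nu$ (the component is preserved because $C^0$-close loops are freely homotopic). To upgrade weak to strong $W^{1,2}$-convergence I would use the standard Morse-theoretic device: the gradient of the kinetic part of $S_k$ differs from $-\ddot x/T$ (in the appropriate weak sense, after projecting to $TM$) by lower-order terms that depend continuously on the $C^0$-data; writing the equation $d_{(x_n,T_n)}S_k(\xi,0)=o(1)\|\xi\|$ with the test vector $\xi_n:=x_n-x_\infty$ (suitably projected to remain tangent), the quadratic term $\int_0^1\tfrac1{T_n}\langle\dot x_n,\dot\xi_n\rangle\,dt$ is forced to vanish in the limit, which together with weak convergence gives $\|\dot x_n\|_{L^2}\to\|\dot x_\infty\|_{L^2}$ and hence norm convergence; also $T_n\to T_\infty$, so $(x_n,T_n)\to(x_\infty,T_\infty)$ in $W^{1,2}\times\mathbb R^+$. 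That $(x_\infty,T_\infty)$ is then automatically a critical point of $S_k$ (hence a closed orbit by Corollary \ref{cor:relating crit points}) is a byproduct, though not strictly needed for the statement.

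For part (2), assume $\liminf T_n=0$, pass to a subsequence with $T_n\to 0$, and suppose all $x_n$ are contractible, so $S_k(x_n,T_n)=A_k(\tilde y_n)$ by \eqref{eq:contractibl}, where now I work with a capping disc so that $\int_{C(x_n)}\sigma=\int_{D^2}\boldsymbol{x}_n^*\sigma$. The key point is that $A_k(\tilde y_n)\to 0$: by \eqref{eq:alt def of c}, for any $\varepsilon>0$ there is $u\in C^\infty(\tilde M)$ with $\tfrac12|d_q u+\theta_q|^2\le c(g,\sigma)+\varepsilon$ for all $q$; replacing $\theta$ by the cohomologous primitive $\theta+du$ does not change $A_k$ on closed curves, so we may assume $\|\theta\|_\infty^2\le 2(c+\varepsilon)$, i.e. $\|\theta\|_\infty\le\sqrt{2(c+\varepsilon)}$. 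Then
\begin{align*}
A_k(\tilde y_n)&=\int_0^{T_n}\tfrac12|\dot{\tilde y}_n|^2\,dt+kT_n-\int_{\tilde y_n}\theta\\
&\ge \int_0^{T_n}\tfrac12|\dot{\tilde y}_n|^2\,dt+kT_n-\|\theta\|_\infty\int_0^{T_n}|\dot{\tilde y}_n|\,dt\\
&\ge \int_0^{T_n}\Big(\tfrac12|\dot{\tilde y}_n|-\tfrac{\|\theta\|_\infty}{\sqrt2}\cdot\tfrac1{\sqrt2}\Big)^2 dt + kT_n - \tfrac{\|\theta\|_\infty^2}{4}\cdot 2T_n\\
&\ge (k-c-\varepsilon)\,T_n,
\end{align*}
so $\liminf_n A_k(\tilde y_n)\ge (k-c-\varepsilon)\cdot 0 = 0$ after letting $\varepsilon\to0$, giving $\liminf_n S_k(x_n,T_n)\ge 0$. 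For the reverse inequality I would use the almost-critical condition $\partial_T S_k(x_n,T_n)=o(1)$ and \eqref{eq:d/dT} to see that $\tfrac1{T_n}\int_0^{T_n}\{k-2E(y_n,\dot y_n)\}\,dt\to0$, hence $\int_0^{T_n}E(y_n,\dot y_n)\,dt=\int_0^1\tfrac1{2T_n}|\dot x_n|^2\,dt$ stays comparable to $\tfrac{k}{2}T_n\to0$, so the kinetic term of $S_k$ tends to $0$; then $kT_n\to0$ and, using $\|\theta\|_\infty<\infty$ once more, $\big|\int_{\tilde y_n}\theta\big|\le\|\theta\|_\infty\int_0^{T_n}|\dot{\tilde y}_n|\,dt\le\|\theta\|_\infty\sqrt{2T_n}\,\sqrt{\int_0^{T_n}E\,dt}\to0$, so $S_k(x_n,T_n)\to 0$.

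The main obstacle is the bootstrapping in part (1): turning ``$\|dS_k\|\to0$ and a $W^{1,2}$-bound'' into genuine $W^{1,2}$-convergence of $x_n$ requires care with the fact that the $L^2$-gradient of the kinetic functional on $\Lambda_M$ is only defined after projecting onto $T_{x}\Lambda_M$, and that the magnetic term contributes a term $\int_0^1\sigma_{x(t)}(\xi,\dot x)\,dt$ which must be shown to be a continuous (indeed compact) perturbation so that it passes to the limit under weak convergence; this is exactly where the isometric embedding $M\subseteq\mathbb R^d$ and the structure of $T_x\Lambda_{\mathbb R^d}$ are used, following \cite{AbbondandoloSchwarz2008}. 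Part (2) is comparatively soft once the Mañé inequality \eqref{eq:alt def of c} is invoked — the only subtlety is that, although $c(g,\sigma)<\infty$ is assumed here, the estimate in (2) does not actually need $k<c$, which is why it holds on the nose as stated.
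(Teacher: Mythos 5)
Your proposal is correct and, in part (1), follows essentially the same route as the paper: reduce to a single coordinate chart after extracting a $C^0$-limit (the paper does this via Arzel\`a--Ascoli in Lemma \ref{pro:special case of PS}, you do it via Rellich), then feed $\xi = x_n-x_m$ (paper) or $\xi_n = x_n-x_\infty$ (you) into the ``almost-critical'' estimate $|dS_k(\xi,0)|\le \tfrac1n\|\xi\|$, observe that the $E_q$-term and the $\sigma$-term are controlled by $C^0$-closeness together with the $L^2$ bound on $\dot x_n$, and deduce that the remaining $E_v$-term forces norm convergence of $\dot x_n$. These are two standard ways of phrasing the same compactness argument, and you correctly flag the one genuine technical point (projecting test vectors onto $T_x\Lambda_M$ inside $\Lambda_{\mathbb R^d}$ and compactness of the magnetic perturbation).

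Part (2) is where you take a genuinely different and cleaner route. The paper shows $e_n\to 0$ by testing $dS_k$ against $(\xi_n,0)$ with $\xi_n(t)=x_n(t)-x_n(0)$, dividing by $\sqrt{T_n}$, and running a chain of inequalities involving $b_3$, $b_4$, $l_n$ and \eqref{eq:cs ineq} to conclude $e_n/\sqrt{T_n}$ is bounded. You instead simply test against the unit vector $(0,1)$: since the metric \eqref{eq:standard metric} is Euclidean in the $T$-factor, $\|d_{(x_n,T_n)}S_k\|<\tfrac1n$ gives $|\partial_T S_k(x_n,T_n)|<\tfrac1n$, and \eqref{eq:d/dT} then yields $e_n=T_n\,(k-o(1))/2\to 0$ directly, after which $|\int_{\tilde y_n}\theta|\le\|\theta\|_\infty\sqrt{2T_ne_n}\to 0$ finishes the job. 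This is shorter and also makes the hypothesis $|S_k(x_n,T_n)|\le A$ inessential for this theorem (the paper's Lemma \ref{lem:energy bound} uses it to get the energy bound via superlinearity, which is then recycled elsewhere in the paper). Two small remarks: the preliminary ``$\liminf S_k\ge 0$'' estimate using \eqref{eq:alt def of c} and the near-optimal primitive $\theta+du$ is entirely redundant once you have $e_n\to 0$, since the latter gives full convergence $S_k(x_n,T_n)\to 0$ on its own; and the intermediate square-completion display in that redundant estimate has the wrong coefficients (you should get $\tfrac12(|v|-\|\theta\|_\infty)^2-\tfrac12\|\theta\|_\infty^2$, not the expression written), though your final lower bound $(k-c-\varepsilon)T_n$ is nonetheless correct.

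Finally, a caveat not of your making but worth noting: the paper's displayed formula \eqref{eq:d/dT} appears to carry a spurious factor of $2$ relative to the direct differentiation of $S_k$ in $T$ (one finds $\partial_T S_k = k - e/T$, not $k-2e/T$); this does not affect either your argument or the paper's, since in both cases one only needs $\partial_T S_k\to 0$ and $T_n\to 0$ to force $e_n\to 0$.
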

Before proving the theorem, let us now fix some notation that we will
use throughout this section, as well as implicitly throughout the
rest of the paper. Given a sequence $(x_{n},T_{n})\subseteq\Lambda_{M}\times\mathbb{R}^{+}$,
let $y_{n}:[0,T_{n}]\rightarrow M$ be defined by $y_{n}(t):=x_{n}(t/T_{n})$.
Define:\[
l_{n}:=\int_{0}^{1}\left|\dot{x}_{n}(t)\right|dt;\]
 \[
e_{n}:=\int_{0}^{1}\frac{1}{2T_{n}}\left|\dot{x}_{n}(t)\right|^{2}dt.\]
Note that $l_{n}$ is the length of $y_{n}$ and $e_{n}$ is the energy
of $y_{n}$. The Cauchy-Schwarz inequality implies \begin{equation}
l_{n}^{2}\leq2T_{n}e_{n}.\label{eq:cs ineq}\end{equation}

Suppose now $c(g,\sigma)<\infty$. Since $\left\Vert \theta\right\Vert _{\infty}<\infty$,
there exist constants $b_{1},b_{2}\in\mathbb{R}^{+}$ such that \begin{equation}
L(q,v)\geq b_{1}\left|v\right|^{2}-b_{2}\label{eq:superlinearity}\end{equation}
for all $(q,v)\in T\tilde{M}$. 

Given $A,B,k\in\mathbb{R}^{+}$ and a free homotopy class $\nu\in[\mathbb{T},M]$,
let $\mathbb{D}(A,B,k,\nu)\subseteq\Lambda_{M}\times\mathbb{R}^{+}$
denote the set of pairs $(x,T)$ such that $x\in\Lambda_{\nu}$, $S_{k}(x,T)\leq A$
and $T\leq B$.

\subsection*{Proof of Theorem \ref{thm:the Palais Smale theorem}}

$\ $\vspace{6 pt}

We begin with three preparatory lemmata.
\begin{lem}
\label{lem:energy bound}Suppose $c(g,\sigma)<\infty$. Let $(x_{n},T_{n})\subseteq\mathbb{D}(A,B,k,\nu)$.
Then if \[
b(A,B,\nu):=\frac{A+b_{2}B+\left|a_{\nu}\right|}{2b_{1}}\]
 it holds that \[
e_{n}\leq b(A,B,\nu)\]
for all $n\in\mathbb{N}$.\end{lem}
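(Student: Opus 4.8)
The plan is to bound the energy $e_n$ directly from the hypothesis $S_k(x_n,T_n)\le A$, using the relation \eqref{eq:Sk and Ak} between $S_k$ and $A_k$ together with the superlinearity estimate \eqref{eq:superlinearity}. First I would pick lifts $\tilde{x}_n$ of $x_n$ to $\tilde M$ and set $\tilde{y}_n(t):=\tilde{x}_n(t/T_n)$, so that \eqref{eq:Sk and Ak} gives
\[
S_k(x_n,T_n)=A_k(\tilde{y}_n)+a_\nu=\int_0^{T_n}L(\tilde{y}_n,\dot{\tilde{y}}_n)\,dt+kT_n+a_\nu.
\]

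Next I would apply \eqref{eq:superlinearity}, namely $L(q,v)\ge b_1|v|^2-b_2$, to the integral over $[0,T_n]$. Since $\int_0^{T_n}|\dot{\tilde{y}}_n|^2\,dt=2e_n$ (a change of variables $t\mapsto t/T_n$ turns $\int_0^1\frac{1}{2T_n}|\dot{x}_n|^2\,dt$ into $\frac12\int_0^{T_n}|\dot{y}_n|^2\,dt$, and $|\dot{\tilde y}_n|=|\dot y_n|$ since the covering projection is a local isometry), this yields
\[
\int_0^{T_n}L(\tilde{y}_n,\dot{\tilde{y}}_n)\,dt\ \ge\ 2b_1 e_n-b_2 T_n.
\]
Combining with the displayed identity for $S_k$ and dropping the nonnegative term $kT_n$, I get
\[
A\ \ge\ S_k(x_n,T_n)\ \ge\ 2b_1 e_n-b_2 T_n+a_\nu\ \ge\ 2b_1 e_n-b_2 B-|a_\nu|,
\]
using $T_n\le B$ and $a_\nu\ge -|a_\nu|$. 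Solving for $e_n$ gives exactly $e_n\le\frac{A+b_2 B+|a_\nu|}{2b_1}=b(A,B,\nu)$.

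This is essentially a one-step computation, so there is no serious obstacle; the only points requiring a little care are the bookkeeping in the change of variables relating $e_n$ to $\int_0^{T_n}|\dot{\tilde y}_n|^2\,dt$, and the observation that \eqref{eq:Sk and Ak} is available precisely because $c(g,\sigma)<\infty$ (so that $S_k$ is defined on all of $\Lambda_M\times\mathbb{R}^+$ and the constant $a_\nu$ makes sense for the given free homotopy class $\nu$). The superlinearity constants $b_1,b_2$ exist by \eqref{eq:superlinearity}, which in turn relies on $\|\theta\|_\infty<\infty$, itself equivalent to $c(g,\sigma)<\infty$; so all hypotheses are consistent. I would write the proof in three or four lines along exactly these lines.
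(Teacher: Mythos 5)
Your proof is correct and follows the same route as the paper's: apply \eqref{eq:Sk and Ak} to rewrite $S_k$ in terms of $A_k$, invoke the superlinearity bound \eqref{eq:superlinearity}, and then use $T_n\le B$ together with $|a_\nu|$ to obtain $b(A,B,\nu)$. (Minor remark: the paper's displayed identity \eqref{eq:Sk and Ak} reads $S_k=A_k+a_\nu$ while its own proof of this lemma uses $S_k=A_k-a_\nu$; as you observed, the discrepancy is harmless here since the final bound only involves $|a_\nu|$.)
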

\begin{proof}
We have by \eqref{eq:Sk and Ak} and \eqref{eq:superlinearity} that

\begin{align*}
A & \geq S_{k}(x_{n},T_{n})\\
 & =A_{k}(\tilde{y}_{n})-a_{\nu}\\
 & \geq2b_{1}e_{n}-b_{2}T_{n}+kT_{n}-a_{\nu},\end{align*}
and thus \[
e_{n}\leq\frac{A+b_{2}T_{n}-kT_{n}+\left|a_{\nu}\right|}{2b_{1}}\leq\frac{A+b_{2}B+\left|a_{\nu}\right|}{2b_{1}}.\]
\end{proof}
\begin{lem}
\label{lem:Cun goes to zero}Suppose $c(g,\sigma)<\infty$, and suppose
$(x_{n})\subseteq\Lambda_{0}$ are such that $l_{n}\rightarrow0$.
Then $\int_{C(x_{n})}\sigma\rightarrow0$.\end{lem}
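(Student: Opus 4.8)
The plan is to exploit the capping-disc description of the term $\int_{C(x)}\sigma$ for contractible loops, together with a short-loops-bound-small-discs argument. Recall from \eqref{eq:capping disc} that for a contractible loop $x_n$ we have $\int_{C(x_n)}\sigma=\int_{D^2}\boldsymbol{x}_n^*\sigma$ for any capping disc $\boldsymbol{x}_n$. So it suffices to exhibit, for each $n$, a capping disc of $x_n$ whose $\sigma$-area tends to $0$ as $n\to\infty$.

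First I would pass to the universal cover. Since $x_n$ is contractible we may lift it to a loop $\tilde{x}_n$ in $\tilde M$, and $\int_{C(x_n)}\sigma=\int_{\tilde{x}_n}\theta$ by the computation leading to \eqref{eq:contractibl} (with $a_0=0$); alternatively this follows directly from $\int_{C(x_n)}\sigma=\int_R d\theta=\int_{\partial R}\theta$ once one sets up the lift. Now the point is that $\tilde{x}_n$ is a loop in $\tilde M$ of length $l_n\to 0$. Fix a base point and use that $\tilde M$ is a complete Riemannian manifold (the lift of the metric on $M$): there is a radius $\rho>0$ and a constant $\kappa$ such that any loop of length $<\rho$ based anywhere is contained in a geodesically convex ball and can be contracted within that ball through a disc of area $\le \kappa l_n^2$ — the standard "small loops bound small discs" estimate, obtained e.g. by coning the loop to its "center of mass" or simply by the geodesic homotopy to the base point within the convex ball. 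On this disc, since $\tilde\sigma=d\theta$ with $\theta$ a smooth (hence locally bounded) $1$-form, Stokes gives $\bigl|\int_{D^2}\tilde{x}_n^*\tilde\sigma\bigr|=\bigl|\int_{\tilde{x}_n}\theta\bigr|\le C\cdot l_n$ once $l_n<\rho$, where $C$ bounds $|\theta|$ over the compact region swept out. Letting $l_n\to 0$ forces $\int_{C(x_n)}\sigma=\int_{\tilde{x}_n}\theta\to 0$.

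One must be slightly careful because $\theta$ need not be bounded globally on $\tilde M$ (that is exactly the case $c(g,\sigma)=\infty$), but here we are given $c(g,\sigma)<\infty$, so by \eqref{eq:better bound} we do have $\|\theta\|_\infty<\infty$, and then the estimate $\bigl|\int_{\tilde{x}_n}\theta\bigr|\le\|\theta\|_\infty\, l_n$ is immediate — we do not even need the convex-ball argument; the crude length bound on the integral of a bounded $1$-form suffices. Thus $l_n\to 0$ gives $\int_{C(x_n)}\sigma\to 0$ at once.

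The only genuinely delicate point is matching the lift correctly with the chosen cylinder/reference-loop conventions: one has to check that for $x_n\in\Lambda_0$ the quantity $\int_{C(x_n)}\sigma$ really equals $\int_{\tilde{x}_n}\theta$ for the lift $\tilde{x}_n$, i.e. that $x_{0}$ in the definition of $a_0$ can be taken constant and hence $a_0=0$. This is recorded just above \eqref{eq:contractibl}, so the lemma follows. The main obstacle — were $c(g,\sigma)$ allowed to be infinite — would be the unboundedness of $\theta$; but that case is excluded by hypothesis, so the proof is short.
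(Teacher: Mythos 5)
Your proof is correct and ultimately identical to the paper's: lift to $\tilde M$, write $\int_{C(x_n)}\sigma=\int_{\tilde x_n}\theta$ via Stokes on a lifted capping disc, and use that $c(g,\sigma)<\infty$ forces $\|\theta\|_\infty<\infty$, giving $\bigl|\int_{\tilde x_n}\theta\bigr|\le\|\theta\|_\infty\, l_n\to0$. The detour through convex-ball area bounds is, as you yourself note, unnecessary once the global bound on $\theta$ is in hand.
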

\begin{proof}
Let $\boldsymbol{x}_{n}:D^{2}\rightarrow M$ denote a capping disc
for $x_{n}$, so $\boldsymbol{x}_{n}|_{\partial D^{2}}=x_{n}$ and
$\int_{C(x_{n})}\sigma=\int_{D^{2}}\boldsymbol{x}_{n}^{*}\sigma$
(as in \eqref{eq:capping disc}).

Let $\tilde{\boldsymbol{x}}_{n}:D^{2}\rightarrow\tilde{M}$ denote
a lift of $\boldsymbol{x}_{n}$ to $\tilde{M}$. Then \[
\left|\int_{D^{2}}\boldsymbol{x}_{n}^{*}\sigma\right|=\left|\int_{D^{2}}\tilde{\boldsymbol{x}}_{n}^{*}(d\theta)\right|=\left|\int_{\tilde{x}_{n}}\theta\right|\leq\left\Vert \theta\right\Vert _{\infty}l_{n}\rightarrow0.\]

\end{proof}
We now reduce the first statement of Theorem \ref{thm:the Palais Smale theorem}
to the following simpler situation:
\begin{lem}
\label{pro:special case of PS}Suppose $c(g,\sigma)<\infty$ and $(x_{n},T_{n})\in\mathbb{D}(A,B,k,\nu)$
with $\liminf T_{n}>0$. Passing to a subsequence we may assume that
there exists $x\in\Lambda_{\nu}$ such that the $x_{n}$ converge
to $x$ in the $C^{0}$-topology.\end{lem}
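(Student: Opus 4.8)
The plan is to combine the uniform energy estimate of Lemma \ref{lem:energy bound} with the compactness of the Sobolev embedding $W^{1,2}(\mathbb{T},\mathbb{R}^{d})\hookrightarrow C^{0}(\mathbb{T},\mathbb{R}^{d})$. Since $(x_{n},T_{n})\in\mathbb{D}(A,B,k,\nu)$, Lemma \ref{lem:energy bound} gives $e_{n}\leq b(A,B,\nu)$ for every $n$. By hypothesis $\liminf T_{n}>0$, so after passing to a subsequence there is $\tau>0$ with $T_{n}\geq\tau$ for all $n$; together with $T_{n}\leq B$ this yields
\[
\int_{0}^{1}\left|\dot{x}_{n}(t)\right|^{2}dt=2T_{n}e_{n}\leq 2B\,b(A,B,\nu)=:C
\]
for all $n$. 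Because $M$ is compact and embedded in $\mathbb{R}^{d}$, the loops $x_{n}$ are also uniformly bounded in $C^{0}$, and hence $(x_{n})$ is a bounded sequence in the Hilbert space $W^{1,2}(\mathbb{R}^{d})$.

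Next I would extract the limit. By the Cauchy-Schwarz inequality, for all $s,t$,
\[
\left|x_{n}(t)-x_{n}(s)\right|\leq\int_{s}^{t}\left|\dot{x}_{n}(r)\right|dr\leq C^{1/2}\left|t-s\right|^{1/2},
\]
so $(x_{n})$ is equicontinuous; together with the uniform $C^{0}$-bound, the Arzel\`a--Ascoli theorem provides, after passing to a further subsequence, a curve $x\in C^{0}(\mathbb{T},\mathbb{R}^{d})$ with $x_{n}\to x$ uniformly. Since $M$ is closed in $\mathbb{R}^{d}$ we get $x(\mathbb{T})\subseteq M$. To see $x\in\Lambda_{M}$, note that the bounded sequence $(x_{n})$ has a subsequence converging weakly in $W^{1,2}(\mathbb{R}^{d})$, and its weak limit must coincide with the uniform limit $x$ (point evaluations being weakly continuous on $W^{1,2}$ in one variable), so $x\in W^{1,2}(M)\cap\Lambda_{\mathbb{R}^{d}}=\Lambda_{M}$. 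Finally, any two loops in $M$ that are sufficiently $C^{0}$-close are freely homotopic --- for instance through the geodesic homotopy $t\mapsto\exp_{x(t)}\bigl(s\,\exp_{x(t)}^{-1}(x_{n}(t))\bigr)$, which is well defined as soon as $\sup_{t}d(x_{n}(t),x(t))$ is smaller than the injectivity radius of $g$. Hence for $n$ large $x_{n}$ and $x$ belong to the same free homotopy class, which is $\nu$, so $x\in\Lambda_{\nu}$.

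I do not expect a genuine obstacle in this lemma: its only content is assembling the standard compactness facts in the right order, the key input being the uniform bound on $\int_{0}^{1}|\dot{x}_{n}|^{2}dt$ that the condition $\liminf T_{n}>0$ extracts from the energy bound. The delicate part of the argument --- upgrading this $C^{0}$-convergence to $W^{1,2}$-convergence of the pairs $(x_{n},T_{n})$ --- is deferred to the remainder of the proof of Theorem \ref{thm:the Palais Smale theorem}, and is precisely where the Palais-Smale hypothesis $\left\Vert d_{(x_{n},T_{n})}S_{k}\right\Vert\to0$ will be used.
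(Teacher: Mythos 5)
Your proposal is correct and follows essentially the same route as the paper: the energy bound from Lemma \ref{lem:energy bound} yields a uniform H\"older-$\tfrac12$ modulus of continuity via Cauchy--Schwarz, and Arzel\`a--Ascoli then gives the $C^{0}$-convergent subsequence. The only structural difference is that the paper first concatenates each $y_{n}$ with short geodesics $c_{n}$ to base all the loops at a common point $q$ before invoking Arzel\`a--Ascoli, a step you correctly omit as unnecessary; you also supply the (implicit in the paper) checks that the limit lies in $W^{1,2}$ via weak compactness and in $\Lambda_{\nu}$ via a geodesic homotopy, so your version is a little more complete.
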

\begin{proof}
Firstly by compactness of $M$, passing to a subsequence if necessary
we may assume there exists $q\in M$ and $T\in\mathbb{R}^{+}$ such
that $\lim_{n\rightarrow\infty}x_{n}(0)=x_{n}(1)=q$ and $\lim_{n\rightarrow\infty}T_{n}=T$.
Consider $g$-geodesics $c_{n}:I\rightarrow M$ such that $c_{n}(0)=q$
and $c_{n}(1)=x_{n}(0)$. By passing to a subsequence we may assume
that $\mbox{dist}_{g}(x_{n}(0),q)<1$, and thus we have $\left|\dot{c}_{n}\right|\leq1$.
Now consider the curves $w_{n}:[0,T_{n}+2]\rightarrow M$ defined
by \[
w_{n}(t)=c_{n}*y_{n}*c_{n}^{-1},\]
and $z_{n}:\mathbb{T}\rightarrow M$ defined by \[
z_{n}(t)=w_{n}(t/T_{n}+2).\]
Thus $z_{n}(0)=z_{n}(1)=q$, and $(z_{n})\subseteq\Lambda_{\nu}$.

Given $0\leq t_{1}<t_{2}<T_{n}+2$, \[
\mbox{dist}_{g}(w_{n}(t_{1}),w_{n}(t_{2}))\leq\int_{t_{1}}^{t_{2}}\left|\dot{w}_{n}(t)\right|dt\leq\sqrt{2}\left|t_{2}-t_{1}\right|^{1/2}\left(\int_{0}^{T_{n}+2}\frac{1}{2}\left|\dot{w}_{n}(t)\right|^{2}dt\right)^{1/2}.\]
By Lemma \ref{lem:energy bound} we have\begin{align*}
\int_{0}^{T_{n}+2}\frac{1}{2}\left|\dot{w}_{n}(t)\right|^{2}dt & =\int_{0}^{1}\frac{1}{2}\left|\dot{c}_{n}(t)\right|^{2}dt+e_{n}+\int_{0}^{1}\frac{1}{2}\left|\dot{c}_{n}^{-1}(t)\right|^{2}dt\\
 & \leq1+b(A,B,\nu),\end{align*}
and thus\[
\mbox{dist}_{g}(w_{n}(t_{1}),w_{n}(t_{2}))\leq\sqrt{2}\left|t_{2}-t_{1}\right|^{1/2}(1+b(A,B,\nu))^{1/2}.\]
Hence the family $(w_{n})$ is equicontinuous. The Arzelá-Ascoli theorem
then completes the proof. 
\end{proof}
We will now prove Theorem \ref{thm:the Palais Smale theorem}.
\begin{proof}
\emph{(of Theorem \ref{thm:the Palais Smale theorem})}

We begin by proving the first statement of the theorem. This part
of the proof is very similar to the proof of \cite[Theorem B]{ContrerasIturriagaPaternainPaternain2000}.
Suppose $(x_{n},T_{n})\subseteq\mathbb{D}(A,B,k,\nu)$ with $\liminf T_{n}>0$.
By the previous lemma, after passing to a subsequence if necessary,
we may assume that $(x_{n},T_{n})$ converges in the $C^{0}$-topology
to some $(x,T)$ where $T>0$. 

Without loss of generality, let us assume that the limit curve $x$
is contained in a single chart $U$ (otherwise simply repeat these
arguments finitely many times). Then after passing possibly to another
subsequence, we may assume that the $x_{n}$ are all contained in
$U$ as well. There exists a constant $b_{3}\in\mathbb{R}^{+}$ such
that in the coordinates on $U$, \begin{equation}
b_{3}:=\sup_{q\in U,v\in T_{q}M}\frac{\left|E_{q}(q,v)\right|}{1+\left|v\right|^{2}}<\infty.\label{eq:b3}\end{equation}

Write $z_{n}(t):=\frac{1}{T_{n}}x_{n}(t)$. By Lemma \ref{lem:energy bound}
we can find a constant $R>0$ such that \[
\left|x_{n}\right|_{1,2}\leq R,\ \ \ \left|z_{n}\right|_{1,2}\leq R.\]
Now since $\left\Vert d_{(x_{n},T_{n})}S_{k}\right\Vert \rightarrow0$
as $n\rightarrow\infty$, given $\varepsilon>0$ there exists $N\in\mathbb{N}$
such that for every $(\xi,\psi)$ satisfying $\left|(\xi,\psi)\right|\leq2R$
and $n,m\geq N$ we have\[
\left|d_{(x_{n},T_{n})}S_{k}(\xi,\psi)-d_{(x_{m},T_{m})}S_{k}(\xi,\psi)\right|<\varepsilon.\]
Take $\xi=x_{n}-x_{m}$ and $\psi=0$ and use \eqref{eq:local coordinates}
to discover:\begin{multline}
\left|\int_{0}^{1}\left\{ T_{n}\cdot E_{q}(x_{n},\dot{z}_{n})-T_{m}\cdot E_{q}(x_{m},\dot{z}_{m})\right\} (x_{n}-x_{m})dt\right.\\
+\int_{0}^{1}\left\{ E_{v}(x_{n},\dot{z}_{n})-E_{v}(x_{m},\dot{z}_{m})\right\} (\dot{x}_{n}-\dot{x}_{m})dt\\
\left.+\int_{0}^{1}\sigma_{x_{n}}(\dot{x}_{n},\dot{x}_{m})-\sigma_{x_{m}}(\dot{x}_{n},\dot{x}_{m})dt\right|<\varepsilon.\label{eq:three integrals}\end{multline}
Here we are using the canonical parallel transport available to us
on Euclidean spaces to view $\dot{x}_{n}-\dot{x}_{m}$ as a tangent
vector in any tangent space of our choosing. Using \eqref{eq:b3}
we can bound the first integral as follows: \[
\left|\int_{0}^{1}\left\{ T_{n}\cdot E_{q}(x_{n},\dot{z}_{n})-T_{m}\cdot E_{q}(x_{m},\dot{z}_{m})\right\} (x_{n}-x_{m})dt\right|\leq(2Bb_{3}+2b_{3}b(A,B,\nu))\left\Vert x_{n}-x_{m}\right\Vert _{\infty}.\]
Let us write $\sigma|_{U}$ in local coordinates as $\sigma=\sigma_{ij}dq^{i}\wedge dq^{j}$,
where $\sigma_{ij}\in C^{\infty}(U,\mathbb{R})$. Then since\[
\left|\sigma_{ij}(x_{n}(t))-\sigma_{ij}(x_{m}(t))\right|\rightarrow0\ \ \ \mbox{as }n,m\rightarrow\infty,\mbox{\ uniformly in }t,\]
and\[
\int_{0}^{1}\left|\dot{x}_{n}\right|\left|\dot{x}_{m}\right|dt\leq2\sqrt{T_{n}T_{m}e_{n}e_{m}}\]
is bounded, it follows that for $n,m$ large the third integral is
small. Thus the second integral must also be small for large $n,m$.
Since \[
\left|v-v'\right|^{2}=\left(E_{v}(q,v)-E_{v}(q',v')\right)\cdot(v-v'),\]
we have \[
\int_{0}^{1}\left|\dot{z}_{n}-\dot{z}_{m}\right|^{2}dt\leq\int_{0}^{1}\left\{ E_{v}(x_{n},\dot{z}_{n})-E_{v}(x_{m},\dot{z}_{m})\right\} (\dot{z}_{n}-\dot{z}_{m})dt,\]
and hence the fact that the second integral in \eqref{eq:three integrals}
is small for large $n,m$ implies that the sequence $(z_{n})$, and
hence the sequence $(x_{n})$, converges in the $W^{1,2}$-topology.
This completes the proof of the first statement of Theorem \ref{thm:the Palais Smale theorem}.\newline

We now prove the second statement of Theorem \ref{thm:the Palais Smale theorem}.
This part of the proof follows the proof of \cite[Theorem 3.8]{Contreras2006}
very closely. Assume $(x_{n},T_{n})\subseteq\mathbb{D}(A,B,k,0)$
(where $0\in[\mathbb{T},M]$ denotes the trivial free homotopy class)
and that $\liminf T_{n}=0$. Passing to a subsequence we may assume
that $T_{n}\rightarrow0$. It suffices to show that passing to a subsequence
we have $e_{n}\rightarrow0$. Then \[
S_{k}(x_{n},T_{n})=e_{n}+kT_{n}-\int_{C(x_{n})}\sigma\rightarrow0\]
by Lemma \ref{lem:Cun goes to zero}.

We know that $e_{n}$ remains bounded by Lemma \ref{lem:energy bound}.
Since $T_{n}\rightarrow0$, \eqref{eq:cs ineq} implies that $l_{n}\rightarrow0$.
Thus as before we may assume that all the curves $x_{n}$ take their
image in the domain of some chart $U$ on $M$. Thus for the remainder
of the proof we work in coordinates as if $M=\mathbb{R}^{\textrm{dim}\, M}$.
Let $\xi_{n}(t):=x_{n}(t)-x_{n}(0)$ so that $\xi_{n}(0)=\xi_{n}(1)=0$.
Then $(\xi_{n},0)\in T_{(x_{n},T_{n})}(\Lambda_{\mathbb{R}^{\textrm{dim}\, M}}\times\mathbb{R}^{+})$.
Let also $\zeta_{n}(t):=\xi_{n}(t/T_{n})$, so that $\dot{\zeta}_{n}(t)=\dot{y}_{n}(t)$.
Then \begin{align*}
\left|d_{(x_{n},T_{n})}S_{k}(\xi_{n},0)\right| & \leq\frac{1}{n}\left(T_{n}\int_{0}^{T_{n}}\left|\dot{\zeta}_{n}(t)\right|^{2}dt\right)^{1/2}\\
 & \leq\frac{1}{n}\sqrt{2T_{n}e_{n}}.\end{align*}
 Using \eqref{eq:local coordinates} we have \begin{align*}
d_{(x_{n},T_{n})}S_{k}(\xi_{n},0) & =\int_{0}^{T_{n}}\left\{ E_{q}(y_{n},\dot{y}_{n})\cdot\zeta_{n}+E_{v}(y_{n},\dot{y}_{n})\cdot\dot{\zeta}_{n}(t)\right\} dt\\
 & +\int_{0}^{1}\sigma_{x_{n}(t)}(\xi_{n}(t),\dot{x}_{n}(t))dt.\end{align*}
There exists $b_{4}\in\mathbb{R}^{+}$ such that \[
\left|\int_{0}^{1}\sigma_{x_{n}(t)}(\xi_{n}(t),\dot{x}_{n}(t))dt\right|\leq b_{4}\int_{0}^{1}\left|\xi_{n}(t)\right|\left|\dot{x}_{n}(t)\right|dt\leq b_{4}l_{n}^{2}.\]
Thus using \eqref{eq:b3} and the fact that \[
E_{v}(q,v)\cdot\xi=\left\langle v,\xi\right\rangle \]
we have \begin{align*}
d_{(x_{n},T_{n})}S_{k}(\xi_{n},0) & \geq-b_{3}\int_{0}^{T_{n}}\left(1+\left|\dot{y}_{n}(t)\right|^{2}\right)\left|y_{n}(t)-y_{n}(0)\right|dt+2e_{n}-b_{4}l_{n}^{2}\\
 & \geq-b_{3}l_{n}(T_{n}+2e_{n})+2e_{n}-b_{4}l_{n}^{2}.\end{align*}
Putting this together and dividing through by $\sqrt{T_{n}}$ we have
\[
-b_{3}l_{n}\sqrt{T_{n}}-2b_{3}\frac{e_{n}l_{n}}{\sqrt{T_{n}}}+2\frac{e_{n}}{\sqrt{T_{n}}}-b_{4}\frac{l_{n}^{2}}{\sqrt{T_{n}}}\leq\frac{1}{n}\sqrt{2e_{n}}.\]
By \eqref{eq:cs ineq}, we have\[
\lim_{n\rightarrow\infty}\frac{l_{n}^{2}}{\sqrt{T_{n}}}=0,\ \ \ \lim_{n\rightarrow\infty}\frac{l_{n}}{\sqrt{T_{n}}}\ \ \ \mbox{bounded};\]
thus we must have \[
\lim_{n\rightarrow\infty}\frac{e_{n}}{\sqrt{T_{n}}}\ \ \ \mbox{bounded,}\]
and this can happen if and only if $e_{n}\rightarrow0$. This completes
the proof of the second statement of Theorem \ref{thm:the Palais Smale theorem}.
\end{proof}
We now wish to study the case where $c(g,\sigma)=\infty$. Recall
in this case $S_{k}$ is only defined on $\Lambda_{0}\times\mathbb{R}^{+}$.
In order for a result similar to the above theorem to hold in the
unbounded setting, we must restrict to a subset of $\Lambda_{0}\times\mathbb{R}^{+}$. 
\begin{defn}
Suppose $K\subseteq\tilde{M}$ is compact. Define $\Lambda_{0}^{K}\subseteq\Lambda_{0}$
to be the set of loops $x\in\Lambda_{0}$ such that there exists a
lift $\tilde{x}:\mathbb{T}\rightarrow\tilde{M}$ of $x$ such that
$\tilde{x}(\mathbb{T})\subseteq K$. 
\end{defn}
Here is the extension of Theorem \ref{thm:the Palais Smale theorem}
to the case $c(g,\sigma)=\infty$.
\begin{prop}
\textbf{\textup{\label{thm:(the-key-theorem new)}}}Suppose that $c(g,\sigma)=\infty$.
Let $A,B,k\in\mathbb{R}^{+}$ and take $K\subseteq\tilde{M}$ compact.
Suppose $(x_{n},T_{n})\subseteq\Lambda_{0}^{K}\times\mathbb{R}^{+}$
satisfy:\[
\sup_{n}\left|S_{k}(x_{n},T_{n})\right|\leq A,\ \ \ \sup_{n}T_{n}\leq B,\ \ \ \left\Vert d_{(x_{n},T_{n})}S_{k}\right\Vert <\frac{1}{n}.\]
Then:
\begin{enumerate}
\item If $\liminf T_{n}>0$ then passing to a subsequence if necessary the
sequence $(x_{n},T_{n})$ is convergent in the $W^{1,2}$-topology.
\item If $\liminf T_{n}=0$, passing to a subsequence if necessary it holds
that $S_{k}(x_{n},T_{n})\rightarrow0$.
\end{enumerate}
\end{prop}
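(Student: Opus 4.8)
The plan is to reduce everything to the bounded case by working on a fixed compact enlargement of $K$ inside $\tilde{M}$, on which a primitive of $\tilde{\sigma}$ is automatically bounded. Fix a compact set $K'\subseteq\tilde{M}$ containing an open neighbourhood of $K$ --- for instance the closed $\delta$-neighbourhood of $K$, where $\delta>0$ is small enough that every point of $K$ is the centre of a geodesically convex ball of radius $\delta$ contained in $K'$ (such a $\delta$ exists by compactness of $K$). Since $\theta$ is smooth, $\Theta:=\sup_{q\in K'}\left|\theta_{q}\right|<\infty$, and hence, exactly as in \eqref{eq:superlinearity}, there are constants $b_{1},b_{2}\in\mathbb{R}^{+}$ (depending on $K'$) with $L(q,v)\geq b_{1}\left|v\right|^{2}-b_{2}$ for all $(q,v)\in T\tilde{M}$ with $q\in K'$. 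Given $(x_{n},T_{n})\subseteq\Lambda_{0}^{K}\times\mathbb{R}^{+}$ as in the statement, choose for each $n$ a lift $\tilde{x}_{n}:\mathbb{T}\rightarrow\tilde{M}$ with $\tilde{x}_{n}(\mathbb{T})\subseteq K$, and set $\tilde{y}_{n}(t):=\tilde{x}_{n}(t/T_{n})$, so that $\tilde{y}_{n}([0,T_{n}])\subseteq K\subseteq K'$. Then \eqref{eq:contractibl} gives $S_{k}(x_{n},T_{n})=A_{k}(\tilde{y}_{n})\geq2b_{1}e_{n}-b_{2}T_{n}+kT_{n}$, and therefore $e_{n}\leq(A+b_{2}B)/(2b_{1})$ for all $n$. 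This is the exact analogue of Lemma \ref{lem:energy bound} in the present setting.

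With this energy bound in hand, the first statement follows by running the proof of the first statement of Theorem \ref{thm:the Palais Smale theorem} essentially verbatim. Indeed, the equicontinuity estimate in the proof of Lemma \ref{pro:special case of PS} uses only the energy bound and compactness of $M$, so after passing to a subsequence the $x_{n}$ converge in $C^{0}$ to some $x\in\Lambda_{0}$ (and, choosing the lifts so that $\tilde{x}_{n}(0)$ converges in the compact set $K$, in fact $x\in\Lambda_{0}^{K}$). The subsequent chart-by-chart argument --- take $\xi=x_{n}-x_{m}$, $\psi=0$ in \eqref{eq:local coordinates}, bound the $E_{q}$-integral by \eqref{eq:b3}, bound the $\sigma$-integral using compactness of $M$ together with the energy bound, and deduce $W^{1,2}$-convergence of the rescaled curves $z_{n}=x_{n}/T_{n}$ --- never refers to boundedness of $\theta$ on all of $\tilde{M}$, only to the energy bound and to compactness of $M$. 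Hence $(x_{n},T_{n})$ converges in the $W^{1,2}$-topology along a subsequence.

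For the second statement, the only ingredient that genuinely needs re-proving is the analogue of Lemma \ref{lem:Cun goes to zero}: if $x_{n}\in\Lambda_{0}^{K}$ and $l_{n}\rightarrow0$ then $\int_{C(x_{n})}\sigma\rightarrow0$. Since $T_{n}\rightarrow0$ and $e_{n}$ is bounded, \eqref{eq:cs ineq} gives $l_{n}\rightarrow0$. For $n$ large, $\tilde{x}_{n}$ has length $l_{n}<\delta$ and hence lies in the geodesically convex $\delta$-ball about $\tilde{x}_{n}(0)\in K$, which is contained in $K'$; filling $\tilde{x}_{n}$ inside this ball produces a lift $\tilde{\boldsymbol{x}}_{n}:D^{2}\rightarrow K'$ of a capping disc $\boldsymbol{x}_{n}$ for $x_{n}$. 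Because $\sigma|_{\pi_{2}(M)}=0$, the value $\int_{C(x_{n})}\sigma=\int_{D^{2}}\boldsymbol{x}_{n}^{*}\sigma$ is independent of the chosen capping disc (cf. \eqref{eq:capping disc}), so by Stokes $\left|\int_{C(x_{n})}\sigma\right|=\left|\int_{D^{2}}\tilde{\boldsymbol{x}}_{n}^{*}(d\theta)\right|=\left|\int_{\tilde{x}_{n}}\theta\right|\leq\Theta\, l_{n}\rightarrow0$. The remainder of the proof of the second statement of Theorem \ref{thm:the Palais Smale theorem} --- passing to a chart on $M$ since $l_{n}\rightarrow0$, testing $dS_{k}$ against $(\xi_{n},0)$ with $\xi_{n}=x_{n}-x_{n}(0)$, using \eqref{eq:local coordinates} and \eqref{eq:b3} to obtain $-b_{3}l_{n}\sqrt{T_{n}}-2b_{3}e_{n}l_{n}/\sqrt{T_{n}}+2e_{n}/\sqrt{T_{n}}-b_{4}l_{n}^{2}/\sqrt{T_{n}}\leq\tfrac{1}{n}\sqrt{2e_{n}}$, and concluding $e_{n}\rightarrow0$ --- is local on $M$ and carries over word for word; combined with the bound just obtained this yields $S_{k}(x_{n},T_{n})=e_{n}+kT_{n}-\int_{C(x_{n})}\sigma\rightarrow0$.

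The main obstacle is exactly the adaptation of Lemma \ref{lem:Cun goes to zero}: in the unbounded case one can no longer invoke a global sup-norm of $\theta$, so one must know that the short loops $\tilde{x}_{n}$ can be filled by discs that stay inside a region where $\theta$ is controlled. This is what forces the restriction to $\Lambda_{0}^{K}$, and it is handled by the uniform convexity (equivalently injectivity) radius bound over the compact set $K$ used above. Everything else is purely local on $M$ and is unaffected by whether $c(g,\sigma)$ is finite, which is why the rest of the proof of Theorem \ref{thm:the Palais Smale theorem} transfers unchanged.
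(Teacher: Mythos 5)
Your proof is correct and fills in, carefully and explicitly, the details underlying the paper's one-sentence proof (``the proof proceeds exactly as before, since any primitive $\theta$ of $\tilde{\sigma}$ is necessarily bounded on $K$''). One minor over-engineering: in your analogue of Lemma \ref{lem:Cun goes to zero} you do not actually need the capping disc to stay inside $K'$ --- after applying Stokes the estimate $\left|\int_{\tilde{x}_n}\theta\right|\leq\bigl(\sup_{K}\left|\theta\right|\bigr)\,l_n$ involves $\theta$ only along the boundary curve $\tilde{x}_n\subseteq K$, so it suffices to lift an arbitrary capping disc so that its boundary agrees with the chosen lift $\tilde{x}_n$ (always possible since $D^{2}$ is simply connected), with no convexity-radius argument required.
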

\begin{proof}
The proof proceeds exactly as before, since any primitive $\theta$
of $\tilde{\sigma}$ is necessarily bounded on $K$.
\end{proof}

\section{Supercritical energy levels: the case $k>c(g,\sigma)$}

In this section we assume $c(g,\sigma)<\infty$, and study \emph{supercritical
energies} $k>c(g,\sigma)$. We aim to prove the first statement of
Theorem \ref{thm:theorem A}. The key fact we will use is the following
result. As before, let $\left(\mathcal{M},\left\langle \cdot,\cdot\right\rangle \right)$
be a Riemannian Hilbert manifold, and let $S:\mathcal{M}\rightarrow\mathbb{R}$
be of class $C^{1}$.
\begin{prop}
\label{thm:morse}Suppose $S$ is bounded below, satisfies the Palais-Smale
condition and for every $A\in\mathbb{R}^{+}$ the set $\left\{ x\in\mathcal{M}\,:\, S(x)\leq A\right\} $
is complete. Then $S$ has a global minimum.
\end{prop}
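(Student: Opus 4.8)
The plan is to run the standard deformation argument from critical point theory, using the (negative) gradient flow of $S$. Since $S$ is bounded below, set $\mu:=\inf_{\mathcal{M}}S\in\mathbb{R}$ and pick a minimizing sequence $(x_n)\subseteq\mathcal{M}$ with $S(x_n)\to\mu$. If some $x_n$ were already a minimum we would be done, so assume not. The key point is to upgrade the minimizing sequence to a Palais--Smale sequence, i.e.\ to produce a sequence $(y_n)$ with $S(y_n)\to\mu$ \emph{and} $\|d_{y_n}S\|\to0$; once we have this, the Palais--Smale condition yields a convergent subsequence $y_n\to y$, and continuity of $S$ and of $dS$ (recall $S$ is $C^1$) forces $S(y)=\mu$ and $d_yS=0$, so $y$ is the desired global minimum.

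To produce the Palais--Smale minimizing sequence I would invoke Ekeland's variational principle, which is tailor-made for this situation: $S$ is bounded below and, crucially, the sublevel set $\mathcal{M}^A:=\{x\in\mathcal{M}\,:\,S(x)\le A\}$ is a complete metric space in the induced distance (here $A$ is any number with $A>\mu$, and completeness of $\mathcal{M}^A$ is exactly the hypothesis we are handed). Applying Ekeland's principle on $\mathcal{M}^A$ with $\varepsilon=\varepsilon_n\to0$ and starting point $x_n$ yields points $y_n\in\mathcal{M}^A$ with $S(y_n)\le S(x_n)$, hence $S(y_n)\to\mu$, and with the property that the perturbed functional $z\mapsto S(z)+\varepsilon_n\,\mathrm{dist}(z,y_n)$ attains its minimum at $y_n$; differentiating this at $y_n$ along an arbitrary tangent vector gives $\|d_{y_n}S\|\le\varepsilon_n$, so $\|d_{y_n}S\|\to0$. (One must check $y_n$ lies in the interior of $\mathcal{M}^A$, i.e.\ $S(y_n)<A$, so that the differential inequality is genuinely two-sided; this is immediate since $S(y_n)\to\mu<A$ for $n$ large.) Alternatively, if one prefers to avoid citing Ekeland, the same conclusion follows by the classical pseudo-gradient / deformation lemma: if $S$ had no critical point at level $\mu$ then, using a pseudo-gradient vector field and the Palais--Smale condition at the level $\mu$, one could deform a low sublevel set $\{S\le\mu+\delta\}$ into $\{S\le\mu-\delta\}$, contradicting that $\mu$ is the infimum.

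I expect the only real subtlety to be bookkeeping about completeness and the domain on which Ekeland's principle is applied: $\mathcal{M}$ itself need not be complete, which is precisely why the hypothesis is phrased in terms of completeness of the sublevel sets, and one has to make sure the Ekeland iteration stays inside one fixed complete sublevel set $\mathcal{M}^A$ rather than wandering off. Everything else is soft: boundedness below gives the infimum, completeness of $\mathcal{M}^A$ feeds Ekeland (or makes the gradient-flow trajectories converge), and Palais--Smale converts the approximate critical sequence into an actual critical point realizing the infimum. Note that in our application $\mathcal{M}=\Lambda_\nu\times\mathbb{R}^+$ with the metric \eqref{eq:standard metric} and $S=S_k$, and the completeness-of-sublevel-sets hypothesis will be verified separately using the estimates of Section~3 (in particular Lemma~\ref{lem:energy bound}); here we only need the abstract statement.
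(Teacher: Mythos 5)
The paper does not give its own proof of this proposition; it simply refers to \cite[Corollary 23]{ContrerasIturriagaPaternainPaternain2000}, so there is no internal argument here to compare against. Your proof is correct and is the standard route: apply Ekeland's variational principle to $S$ restricted to a single complete sublevel set $\mathcal{M}^A$ with $A>\mu:=\inf S$ to upgrade a minimizing sequence to a Palais--Smale sequence at level $\mu$, then invoke the Palais--Smale condition and continuity of $S$ and $dS$ to extract the minimizer. You correctly isolate the one subtle point: $\mathcal{M}$ itself need not be complete, the Ekeland iteration must be confined to one fixed $\mathcal{M}^A$, and one must observe that eventually $S(y_n)<A$ so the $y_n$ lie in the interior of $\mathcal{M}^A$ and the two-sided estimate $\|d_{y_n}S\|\le\varepsilon_n$ is legitimate. (One should also note that since $\mu$ is finite one can indeed choose $A\in\mathbb{R}^{+}$ with $A>\mu$, which the hypothesis permits.) The alternative pseudo-gradient/deformation argument you sketch is likewise valid, with the same caveat that completeness of the sublevel sets is what keeps the negative gradient flow from escaping in finite time; either version is a perfectly acceptable self-contained proof of the cited fact.
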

A proof may be found in \cite[Corollary 23]{ContrerasIturriagaPaternainPaternain2000}.
Fix a non-trivial free homotopy class $\nu\in[\mathbb{T},M]$. The
aim of this section is to verify that for $k>c(g,\sigma)$, the functional
$S_{k}$ on the Hilbert manifold $\Lambda_{\nu}\times\mathbb{R}^{+}$
satisfies the hypotheses of Proposition \ref{thm:morse}. For then
the global minimum whose existence Theorem \ref{thm:morse} guarantees
is our desired closed orbit of energy $k$. 

The first step then is the following lemma, whose proof only requires
$k\geq c(g,\sigma)$, and works for any free homotopy class $\nu\in[\mathbb{T},M]$.
\begin{lem}
Let $k\geq c(g,\sigma)$. Then $S_{k}|_{\Lambda_{\nu}\times\mathbb{R}^{+}}$
is bounded below.\emph{ }\end{lem}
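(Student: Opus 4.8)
I want to show that $S_k$ is bounded below on $\Lambda_\nu\times\mathbb{R}^+$ when $k\geq c(g,\sigma)$. The natural starting point is the identity \eqref{eq:Sk and Ak}, which says $S_k(x,T)=A_k(\tilde y)+a_\nu$ where $\tilde y(t)=\tilde x(t/T)$ is a lift of $y(t)=x(t/T)$ to $\tilde M$. Since $a_\nu$ is a fixed constant depending only on $\nu$, it suffices to bound $A_k(\tilde y)$ below over all absolutely continuous curves $\tilde y$ in $\tilde M$ obtained this way, i.e. over all $\tilde y \in C^{\mathrm{ac}}_{\tilde M}(q_0,q_1;T)$ whose endpoints $q_0=\tilde x(0)$ and $q_1=\tilde x(1)$ differ by the covering transformation $\varphi\in\pi_1(M)$ determined by the free homotopy class $\nu$ (via the chosen reference lift $\tilde x_\nu$).

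First I would observe that by the very definition of Mañé's action potential, $A_k(\tilde y)\geq m_k(q_0,q_1)$ for any curve $\tilde y$ joining $q_0$ to $q_1$. By property (4) of the lemma on $m_k$, since $k\geq c(g,\sigma)$ the function $m_k$ is finite everywhere on $\tilde M\times\tilde M$. So the remaining issue is that $q_0=\tilde x(0)$ is not fixed — it ranges over all of $\tilde M$ — so a priori $\inf_{q_0} m_k(q_0,\varphi q_0)$ could be $-\infty$. To control this, I would use the triangle inequality (property (2)) together with $\varphi$-invariance: fixing a basepoint $\tilde q\in\tilde M$, write
\begin{align*}
m_k(q_0,\varphi q_0) &\geq m_k(\tilde q,\varphi q_0)-m_k(\tilde q,q_0)\\
&\geq m_k(\tilde q,\varphi\tilde q)-m_k(\varphi\tilde q,\varphi q_0)-m_k(\tilde q,q_0).
\end{align*}
Now the key point is that because $\theta$ may be chosen $\varphi$-invariant (using \cite[Lemma 5.3]{Paternain2006}, as $\langle\varphi\rangle$ is amenable — exactly as in the derivation of \eqref{eq:Sk and Ak}), the action potential satisfies $m_k(\varphi p,\varphi p')=m_k(p,p')$; indeed $A_k$ is $\varphi$-invariant under the diagonal action on curves since the length term is isometry-invariant and $\int_{\varphi\circ y}\theta=\int_y\varphi^*\theta=\int_y\theta$. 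Hence $m_k(\varphi\tilde q,\varphi q_0)=m_k(\tilde q,q_0)$, giving
\[
m_k(q_0,\varphi q_0)\geq m_k(\tilde q,\varphi\tilde q)-2m_k(\tilde q,q_0).
\]
This still has the unbounded term $-2m_k(\tilde q,q_0)$, so one more idea is needed.

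The fix is to symmetrize: apply the same estimate with the roles reversed, or rather pair a curve with its reverse. For a loop-type curve one has $m_k(q_0,\varphi q_0)+m_k(\varphi q_0,q_0)\geq m_k(q_0,q_0)=0$ by (2) and (3). But more usefully, since any $\tilde y$ arising from $(x,T)\in\Lambda_\nu\times\mathbb{R}^+$ is genuinely a lift of a \emph{loop} downstairs, I can exploit that the downstairs action $S_k$ is what we're bounding and it does not see the choice of lift: replacing $\tilde x$ by $\varphi^n\circ\tilde x$ doesn't change $S_k(x,T)$. So actually the cleanest route is to bound $A_k(\tilde y)$ using \eqref{eq:alt def of c}: choose $u\in C^\infty(\tilde M)$ with $\frac12|d_qu+\theta_q|^2\leq k$ everywhere (possible since $k\geq c(g,\sigma)$). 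Then for any curve $\tilde y:[0,T]\to\tilde M$,
\[
A_k(\tilde y)=\int_0^T\Big(\tfrac12|\dot{\tilde y}|^2+k\Big)dt-\int_{\tilde y}\theta
\geq \int_0^T\Big(|\dot{\tilde y}|\sqrt{2k}\Big)dt-\int_{\tilde y}\theta,
\]
and writing $-\int_{\tilde y}\theta=\int_{\tilde y}(du-\theta)-\int_{\tilde y}du = \int_{\tilde y}(du-\theta)-(u(q_1)-u(q_0))$, the integral term is bounded below by $-\int_0^T|d\tilde y u-\theta|\,|\dot{\tilde y}|\,dt\geq -\sqrt{2k}\int_0^T|\dot{\tilde y}|dt$, which exactly cancels the length term, leaving $A_k(\tilde y)\geq u(q_0)-u(q_1)$. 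If $\nu$ (hence $\varphi$) is such that we may take $u$ to be $\varphi$-invariant — again by \cite[Lemma 5.3]{Paternain2006}, replacing $\theta$ by a $\varphi$-invariant primitive first — then $u(q_0)-u(q_1)=u(q_0)-u(\varphi q_0)=0$, so $A_k(\tilde y)\geq 0$ and therefore $S_k(x,T)\geq a_\nu$.

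\textbf{Main obstacle.} The delicate point is the interchange of quantifiers in the Mañé-potential approach: $q_0$ ranges over all of $\tilde M$, so one genuinely needs the $\varphi$-invariance of the primitive (equivalently of $u$ or of $m_k$) to kill the dependence on the basepoint. I expect the slickest writeup is the \eqref{eq:alt def of c} argument in the last paragraph: first invoke \cite[Lemma 5.3]{Paternain2006} to replace $\theta$ by a $\langle\varphi\rangle$-invariant primitive, then note that the near-minimizer $u$ from \eqref{eq:alt def of c} can likewise be taken $\varphi$-invariant (by averaging, or by a second application of the lemma to $d u+\theta$), and finally run the Cauchy–Schwarz/calibration estimate above. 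The only thing to be careful about is that changing $\theta$ changes $a_\nu$, but $a_\nu$ remains a finite constant and \eqref{eq:Sk and Ak} continues to hold, so $S_k\geq a_\nu>-\infty$ as required.
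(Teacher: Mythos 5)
Your calibration argument via \eqref{eq:alt def of c} is a genuinely different route from the paper's. The paper argues by a long-loop construction: after arranging via \cite[Lemma 5.3]{Paternain2006} that $\theta$ is $\varphi$-invariant, it concatenates the translates $\varphi^{i}\tilde y$ for $i=0,\dots,n$ with translates of $\tilde x_\nu^{-1}$ and closes up with short connecting arcs, producing a closed loop $u_n$ in $\tilde M$; since $m_k(q,q)=0$ for $k\geq c(g,\sigma)$, one has $A_k(u_n)\geq 0$, the $\varphi$-invariance of $\theta$ makes the $(n+1)$ middle contributions identical, and dividing by $n+1$ kills the end-effects in the limit. That argument uses only the action-potential characterisation of $c$ and handles the endpoint $k=c$ and the basepoint dependence automatically.

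Two points in your proposal are more delicate than you suggest. First, you cannot in fact take $u$ to be $\varphi$-invariant. Applying \cite[Lemma 5.3]{Paternain2006} a second time to the primitive $du+\theta$ of $\tilde\sigma$ produces a $\varphi$-invariant bounded primitive $\theta''$; writing $\theta''-\theta=du'$ on the simply connected $\tilde M$, the $\varphi$-invariance of $\theta$ and $\theta''$ only yields $d(u'\circ\varphi-u')=0$, so $u'\circ\varphi-u'$ is a \emph{constant} $C$ that need not vanish (take $\tilde M=\mathbb{R}$, $\varphi(x)=x+1$, $\theta''-\theta=dx$). This is still enough, since your estimate then gives $A_k(\tilde y)\geq C$, but the written assertion $u(q_0)-u(\varphi q_0)=0$ is false in general. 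Second, the case $k=c(g,\sigma)$ is included in the statement and is used later (Lemma \ref{lem:tn bounded away from zero} invokes $i_{c,\nu}>-\infty$), yet the infimum in \eqref{eq:alt def of c} need not be attained; you then only get $\frac{1}{2}|du_\varepsilon+\theta|^2\leq c+\varepsilon$, and the calibration leaves an error term $-\varepsilon T$ with $T$ unbounded. One can still finish by fixing $(x,T)$, letting $\varepsilon\downarrow 0$, and observing that the drift constants $C_\varepsilon$ remain uniformly bounded (since $\|du'_\varepsilon\|_\infty$ is controlled and $d(q,\varphi q)$ is finite for any single fixed $q$), but this limiting step is absent from your sketch. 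The paper's loop construction sidesteps both issues.
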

\begin{proof}
The argument begins by replicating an argument seen earlier in Section
\ref{sec:Preliminaries}. Fix a free homotopy class $\nu\in[\mathbb{T},M]$
(which could be the trivial free homotopy class). Let $(x,T)\in\Lambda_{\nu}\times\mathbb{R}^{+}$,
and let $x_{s}$ denote a free homotopy from $x_{0}=x$ to $x_{1}=x_{\nu}$.
Let $z(s):=x_{s}(0)$. Lift $x_{s}$ to a homotopy $\tilde{x}_{s}$
in $\tilde{M}$ with $\tilde{x}_{1}(t)=\tilde{x}_{\nu}(t)$, and let
$\tilde{x}(t):=\tilde{x}_{0}(t)$, $\tilde{z}_{0}(s)=\tilde{x}_{s}(0)$
and $\tilde{z}_{1}(s)=\tilde{x}_{s}(1)$. 

Now observe that if $R\subseteq\tilde{M}$ denotes the rectangle $R=\mbox{im}\,\tilde{x}_{s}$
then we have \[
\int_{C(x)}\sigma=\int_{R}\tilde{\sigma}=\int_{R}d\theta=\int_{\partial R}\theta=\int_{\tilde{x}*\tilde{z}_{1}*\tilde{x}_{\nu}^{-1}*\tilde{z}_{0}^{-1}}\theta.\]

Suppose $\varphi\in\pi_{1}(M)$ denotes the unique covering transformation
taking $\tilde{z}_{0}$ to $\tilde{z}_{1}$. Since $\left\langle \varphi\right\rangle \leq\pi_{1}(M)$
is an amenable subgroup, \cite[Lemma 5.3]{Paternain2006} allows us
to assume that without loss of generality, $\theta$ is $\varphi$-invariant.
Thus \[
\int_{\tilde{z}_{0}^{-1}}\theta+\int_{\tilde{z}_{1}}\theta=0.\]
It thus follows that \begin{equation}
\int_{C(x)}\sigma=\int_{\tilde{x}}\theta+\int_{\tilde{x}_{\nu}^{-1}}\theta.\label{eq:C(u) eqn}\end{equation}

Let $\tilde{x}_{n}:=\varphi^{n}\tilde{x}$, and use similar notations
for $\tilde{z}_{n}$ and $\tilde{x}_{\nu,n}$. Let $\tilde{y}_{n}:=\tilde{x}_{n}(t/T)$,
so $\tilde{y}_{n}:[0,T]\rightarrow\tilde{M}$. Then for any $n\in\mathbb{N}$
we can consider the closed loop $u_{n}:[0,T_{n}]\rightarrow\tilde{M}$
defined by \[
u_{n}=\tilde{y}_{0}*\tilde{y}_{1}*\dots*\tilde{y}_{n}*\tilde{z}_{n+1}*\tilde{x}_{\nu,n}^{-1}*\dots*\tilde{x}_{\nu,1}^{-1}*\tilde{x}_{\nu}^{-1}*\tilde{z}_{0}^{-1},\]
where \[
T_{n}:=(n+1)T+1+(n+1)+1.\]
We have \begin{align*}
A_{k}(u_{n}) & =(n+1)\left\{ \int_{0}^{T}\frac{1}{2}\left|\dot{\tilde{y}}(t)\right|^{2}dt+\int_{0}^{1}\frac{1}{2}\left|\dot{\tilde{x}}_{\nu}^{-1}\right|^{2}dt-\int_{\tilde{y}_{0}}\theta-\int_{\tilde{x}_{\nu}^{-1}}\theta\right\} \\
 & +\int_{0}^{1}\frac{1}{2}\left|\dot{\tilde{z}}_{1}(t)\right|^{2}dt+\int_{0}^{1}\frac{1}{2}\left|\dot{\tilde{z}}_{0}^{-1}(t)\right|^{2}dt+kT_{n}.\end{align*}
Now if $k\geq c(g,\sigma)$ then by definition of $c(g,\sigma)$ we
have $A_{k}(u_{n})\geq0$. We thus obtain\begin{align*}
0 & \leq\int_{0}^{T}\frac{1}{2}\left|\dot{\tilde{y}}_{0}(t)\right|^{2}dt+\int_{0}^{1}\frac{1}{2}\left|\dot{\tilde{x}}_{\nu}^{-1}\right|^{2}dt-\int_{\tilde{y}_{0}}\theta-\int_{\tilde{x}_{\nu}^{-1}}\theta+\frac{kT_{n}}{n+1}\\
 & +\frac{1}{n+1}\left(\int_{0}^{1}\left|\dot{\tilde{z}}_{1}(t)\right|^{2}dt+\int_{0}^{1}\left|\dot{\tilde{z}}_{0}^{-1}(t)\right|^{2}dt\right).\end{align*}
Letting $n\rightarrow\infty$ and substituting for the terms with
$\tilde{y}_{0}$ we obtain \begin{equation}
\int_{0}^{1}\frac{1}{2T}\left|\dot{x}(t)\right|^{2}dt+\int_{0}^{1}\frac{1}{2}\left|\dot{x}_{\nu}^{-1}\right|^{2}dt-\int_{\tilde{x}}\theta-a_{\nu}+k(T+1)\geq0.\label{eq:bigger than 0 eqn}\end{equation}
Now \begin{align*}
S_{k}(x,T) & =\int_{0}^{1}\frac{1}{2T}\left|\dot{x}(t)\right|^{2}dt+kT-\int_{C(x)}\sigma\\
 & =\int_{0}^{1}\frac{1}{2T}\left|\dot{x}(t)\right|^{2}dt+kT-\int_{\tilde{x}}\theta-a_{\nu},\end{align*}
and hence by \eqref{eq:C(u) eqn} and \eqref{eq:bigger than 0 eqn},
\[
S_{k}(x,T)+\int_{0}^{1}\frac{1}{2}\left|\dot{\tilde{x}}_{\nu}(t)\right|^{2}dt+k\geq0,\]
that is, \[
S_{k}(x,T)\geq-\int_{0}^{1}\frac{1}{2}\left|\dot{\tilde{x}}_{\nu}(t)\right|^{2}dt-k>-\infty,\]
which completes the proof.
\end{proof}
Let us set\[
i_{k,\nu}:=\inf_{(x,T)\in\Lambda_{\nu}\times\mathbb{R}^{+}}S_{k}(x,T),\]
so that the lemma tells us $i_{k,\nu}>-\infty$ for $k\geq c(g,\sigma)$.\newline

The next lemma implies that $\{S_{k}|_{\Lambda_{\nu}\times\mathbb{R}^{+}}\leq A\}$
is complete for any $A\geq0$.
\begin{lem}
\label{lem:precompactness below-1}Suppose $c(g,\sigma)<\infty$.
Let $\nu\in[\mathbb{T},M]$ be a non-trivial free homotopy class and
$A\in\mathbb{R}^{+}$. There exists $T_{0}=T_{0}(A,k,\nu)\in\mathbb{R}^{+}$
such that if $(x,T)\in\mathbb{D}(A,\infty,k,\nu)$ then $T\geq T_{0}$.\end{lem}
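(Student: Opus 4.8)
The plan is to argue by contradiction, exploiting the fact that for a non-trivial free homotopy class, the loops in $\Lambda_\nu$ cannot be too short. Suppose no such $T_0$ exists; then we can find a sequence $(x_n,T_n) \in \mathbb{D}(A,\infty,k,\nu)$ with $T_n \to 0$. First I would invoke Lemma \ref{lem:energy bound}: since $S_k(x_n,T_n) \le A$ and $T_n \le 1$ eventually, the energies $e_n$ are uniformly bounded, say $e_n \le b := b(A,1,\nu)$. Combined with the Cauchy--Schwarz inequality \eqref{eq:cs ineq}, $l_n^2 \le 2T_n e_n \le 2 T_n b \to 0$, so the lengths $l_n$ of the curves $y_n$ (equivalently $x_n$) tend to zero.

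Now comes the geometric input. Let $\rho > 0$ be smaller than the injectivity radius of $(M,g)$; this exists because $M$ is closed. A standard fact is that any loop $x : \mathbb{T} \to M$ of length $l_n < \rho$ is contained in a geodesically convex ball and is therefore contractible — one can contract it along the radial geodesics emanating from $x_n(0)$, staying inside the ball. But $x_n \in \Lambda_\nu$ with $\nu$ non-trivial, so $x_n$ is \emph{not} contractible. This is a contradiction once $l_n < \rho$, which holds for all large $n$. Hence the sequence cannot exist, and $T_0(A,k,\nu)$ exists as claimed.

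The main (indeed only) obstacle is simply packaging the "short loops are contractible" statement cleanly; everything else is a direct application of Lemma \ref{lem:energy bound} and \eqref{eq:cs ineq}. One should be slightly careful that $x_n \in W^{1,2}$ rather than smooth, but this causes no difficulty: a $W^{1,2}$ loop is absolutely continuous, hence continuous, and its image has diameter at most $l_n$, so once $l_n$ is less than the injectivity radius the image lies in a convex ball and the loop is null-homotopic. It may be cleanest to phrase the conclusion as: there is $\rho > 0$ such that every loop in $\Lambda_\nu$ (with $\nu \ne 0$) has length at least $\rho$; then $\rho^2 \le l^2 \le 2T b(A,1,\nu)$ forces $T \ge \rho^2 / (2 b(A,1,\nu)) =: T_0$, provided $T \le 1$ — and if $T > 1$ there is nothing to prove, so we may take $T_0 = \min\{1, \rho^2/(2b(A,1,\nu))\}$.
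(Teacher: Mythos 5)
Your proof is correct and takes essentially the same route as the paper: both rest on the superlinearity estimate \eqref{eq:superlinearity} (which you access indirectly via Lemma \ref{lem:energy bound}), the Cauchy--Schwarz inequality \eqref{eq:cs ineq}, and the fact that $l(\nu)>0$ for a non-trivial free homotopy class on a closed manifold. The paper writes a single direct chain of inequalities $A \geq S_k(x,T) \geq \frac{b_1}{T}\,l(\nu)^2 - (k-b_2)T + a_\nu$ and reads off the lower bound on $T$, rather than arguing by contradiction, but this is a cosmetic repackaging of the same estimates.
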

\begin{proof}
Let $\tilde{x}$ denote an admissable lift of $x$ and let $\tilde{y}:[0,T]\rightarrow\tilde{M}$
be the curve $t\mapsto\tilde{x}(t/T)$. Using \eqref{eq:Sk and Ak}
and \eqref{eq:superlinearity} we compute that \begin{align*}
A & \geq S_{k}(x,T)\\
 & =A_{k}(\tilde{y})+a_{\nu}\\
 & \geq\frac{b_{1}}{T}\int_{0}^{1}\left|\dot{\tilde{x}}\right|^{2}dt-(k-b_{2})T+a_{\nu}\\
 & \geq\frac{b_{1}}{T}l(\nu)-(k-b_{2})T+a_{\nu},\end{align*}
where \[
l(\nu):=\inf\left\{ \int_{0}^{1}\left|\dot{x}(t)\right|dt\,:\, x\in\Lambda_{\nu}\right\} .\]
Since $M$ is closed and $\nu$ is a non-trivial free homotopy class,
we have $l(\nu)>0$, which implies the thesis of the lemma. 
\end{proof}

\subsection*{Proof of the first statement of Theorem \ref{thm:theorem A}}

$\ $\vspace{6 pt}

Take $k>c(g,\sigma)$, and fix is a non-trivial free homotopy class
$\nu\in[\mathbb{T},M]$. Let $(x_{n},T_{n})\subseteq\mathbb{D}(A,\infty,k,\nu)$
. We want to show that $(x_{n},T_{n})$ admits a convergent subsequence
in the $W^{1,2}$-topology. In view of Theorem \ref{thm:the Palais Smale theorem},
it suffices to show that there exists $B>0$ such that $\mathbb{D}(A,B,k,\nu)$
and that $\liminf T_{n}>0$. 
\begin{lem}
\label{lem:tn bounded away from zero}The sequence $(T_{n})$ is bounded
above and bounded away from zero.\end{lem}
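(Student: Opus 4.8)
The plan is to establish the two assertions separately, using the already-proven lemmata of this section. For the upper bound on $(T_n)$, recall from \eqref{eq:d/dT} that $\frac{\partial}{\partial T}S_k(x,T) = \frac{1}{T}\int_0^T\{k-2E(y,\dot y)\}\,dt = k - 2e_n/T_n$ when evaluated at $(x_n,T_n)$ (writing $y_n(t)=x_n(t/T_n)$). Since $\left\Vert d_{(x_n,T_n)}S_k\right\Vert < 1/n$ and the metric \eqref{eq:standard metric} restricts to the standard one on the $\mathbb{R}^+$-factor, we get $\left| k - 2e_n/T_n\right| < 1/n$, hence $e_n > (k-1/n)T_n/2$. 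On the other hand, the definition of $S_k$ gives $S_k(x_n,T_n) = e_n + kT_n - \int_{C(x_n)}\sigma$, and from \eqref{eq:Sk and Ak} together with the superlinearity bound \eqref{eq:superlinearity} one controls $\int_{C(x_n)}\sigma = \int_{\tilde x_n}\theta + a_\nu$ in terms of $\left\Vert\theta\right\Vert_\infty$ and $l_n$; combining with the Cauchy--Schwarz inequality \eqref{eq:cs ineq} and the energy bound from Lemma~\ref{lem:energy bound} should force $T_n$ to stay bounded, for otherwise $S_k(x_n,T_n)$ would exceed $A$. I would therefore first derive $e_n \geq cT_n$ for a positive constant $c$ (valid for $n$ large), then feed this into the $S_k \le A$ inequality to extract $T_n \le B$.

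For the statement that $(T_n)$ is bounded away from zero, I would argue by contradiction: if $\liminf T_n = 0$, pass to a subsequence with $T_n \to 0$. Since $\nu$ is non-trivial, every $x_n\in\Lambda_\nu$ has $l_n \ge l(\nu) > 0$, where $l(\nu)$ is the infimal length in the class $\nu$ as in Lemma~\ref{lem:precompactness below-1}. But \eqref{eq:cs ineq} gives $l_n^2 \le 2T_n e_n$, and by Lemma~\ref{lem:energy bound} the $e_n$ are uniformly bounded (note $(x_n,T_n)\in\mathbb{D}(A,\infty,k,\nu)$ and, once $T_n\le B$ is established, $\mathbb{D}(A,B,k,\nu)$); hence $l_n^2 \le 2Be_n \cdot (T_n/B) \to 0$, contradicting $l_n \ge l(\nu) > 0$. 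This is essentially a restatement of Lemma~\ref{lem:precompactness below-1}, which already asserts $T \ge T_0(A,k,\nu)$ for all $(x,T)\in\mathbb{D}(A,\infty,k,\nu)$ when $k > c(g,\sigma)$; so in fact the lower bound follows immediately by citing that lemma, since $k > c(g,\sigma)$ certainly implies $k-b_2$ need not be positive but the length term $\frac{b_1}{T}l(\nu)$ dominates as $T\to 0$.

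The order I would follow is: (i) use \eqref{eq:d/dT} and the smallness of the differential to get $2e_n/T_n \to k$, in particular $e_n \ge cT_n$ for large $n$; (ii) combine with $S_k(x_n,T_n)\le A$, the identity \eqref{eq:Sk and Ak}, and the bound $|\int_{C(x_n)}\sigma| \le \left\Vert\theta\right\Vert_\infty l_n + |a_\nu| \le \left\Vert\theta\right\Vert_\infty\sqrt{2T_ne_n} + |a_\nu|$, to conclude $T_n$ is bounded above; (iii) invoke Lemma~\ref{lem:precompactness below-1} (now applicable with the finite bound $B$) to conclude $T_n \ge T_0 > 0$.

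The main obstacle is step (ii): one must be careful that the term $-\int_{C(x_n)}\sigma$ in $S_k$ does not conspire with a large $e_n$ to keep $S_k$ bounded while $T_n\to\infty$. The point is that $\int_{C(x_n)}\sigma$ grows at most linearly in $l_n$, hence at most like $\sqrt{T_ne_n}$, which is of lower order than the $e_n + kT_n$ that it is being subtracted from once we know $e_n \gtrsim T_n$; so $S_k(x_n,T_n) \gtrsim e_n + kT_n - C\sqrt{T_ne_n} - |a_\nu| \gtrsim T_n \to \infty$, contradicting $S_k(x_n,T_n)\le A$. Making this Young's-inequality-type estimate precise (absorbing the cross term $C\sqrt{T_ne_n}$ into $\tfrac12 e_n + \tfrac12 C^2 T_n$) is the one genuinely quantitative point, but it is routine.
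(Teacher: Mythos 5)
Your lower-bound argument matches the paper's: both appeal to Cauchy--Schwarz, the energy bound of Lemma~\ref{lem:energy bound}, and the fact that $l_n\geq l(\nu)>0$ for the non-trivial class $\nu$, exactly as in Lemma~\ref{lem:precompactness below-1}. The upper bound, however, is where the two proofs diverge, and your version has a genuine gap.

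The step you flag as ``routine'' is in fact the sticking point. After your absorptions you arrive (taking the $\tfrac12$--$\tfrac12$ Young split you propose) at
\[
A\ \geq\ \tfrac12 e_n + \bigl(k-\lVert\theta\rVert_\infty^2\bigr)T_n - |a_\nu|,
\]
and even after feeding in $e_n\approx \tfrac{k}{2}T_n$ from your step (i), the coefficient of $T_n$ is something like $\tfrac{5k}{4}-\lVert\theta\rVert_\infty^2$ (or, optimizing the Young parameter, $\tfrac{3k}{2}-\sqrt{k}\,\lVert\theta\rVert_\infty$). Neither of these is positive in general: by \eqref{eq:alt def of c} one always has $\lVert\theta\rVert_\infty^2\geq 2c(g,\sigma)$, and for a generic choice of primitive $\lVert\theta\rVert_\infty$ can be arbitrarily large, so the hypothesis $k>c(g,\sigma)$ does not by itself force these coefficients positive. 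The argument can be rescued, but only by \emph{first} replacing $\theta$ with a near-optimal primitive $\theta+du$ whose sup-norm satisfies $\lVert\theta+du\rVert_\infty^2<2c(g,\sigma)+\delta$ for $\delta$ small, and then checking that the resulting coefficient is positive precisely because $k>c$; none of this is in your write-up, and it is the crux of the estimate, not a routine detail. (Note also that once you change $\theta$, the constants $b_1,b_2,a_\nu$ all change, so the bookkeeping is not entirely innocuous.)

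The paper avoids all of this. Its proof of the upper bound is a one-liner that does not use $\lVert d_{(x_n,T_n)}S_k\rVert\to0$ at all (so your step (i) is superfluous): write
\[
A\ \geq\ S_k(x_n,T_n)\ =\ S_c(x_n,T_n) + (k-c)T_n\ \geq\ i_{c,\nu} + (k-c)T_n,
\]
where $i_{c,\nu}>-\infty$ is the infimum of $S_c$ on $\Lambda_\nu\times\mathbb{R}^+$, which was established in the immediately preceding lemma (for every $k\geq c$). This yields $T_n\leq (A-i_{c,\nu})/(k-c)$ directly, with no estimate on $\int_{C(x_n)}\sigma$ and no choice of primitive. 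You should either adopt this approach or, if you want to keep your Young's-inequality route, spell out the choice of primitive that makes the coefficient positive and justify it via \eqref{eq:alt def of c}.
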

\begin{proof}
First we claim that $(T_{n})$ is bounded. Indeed, if $c=c(g,\sigma)$,
\begin{align*}
A & \geq S_{k}(x_{n},T_{n})\\
 & =S_{c}(x_{n},T_{n})+(k-c)T_{n}\\
 & \geq i_{c,\nu}+(k-c)T_{n},\end{align*}
and thus $(T_{n})$ is bounded, say $T_{n}\leq B$ for all $n$, where
$B\in\mathbb{\mathbb{R}}^{+}$. Passing to a subsequence we may assume
that if $T:=\liminf T_{n}$ then $T_{n}\rightarrow T$. It remains
to check $T>0$. From \eqref{eq:cs ineq} and Lemma \ref{lem:energy bound}
if $T=0$ then $l_{n}\rightarrow0$. But this is a contradiction as
$l_{n}>l(\nu)>0$ (see the proof of the previous lemma).
\end{proof}

\section{Subcritical energy levels: the case $k<c(g,\sigma)$}

In this section we drop the assumption that $c(g,\sigma)<\infty$,
and study \emph{subcritical energies} $k<c(g,\sigma)$.

\subsection*{Mountain pass geometry}

$\ $\vspace{6 pt}

As before, let $(\mathcal{M},\left\langle \cdot,\cdot\right\rangle )$
be a Riemannian Hilbert manifold and $S:\mathcal{M}\rightarrow\mathbb{R}$
a function of class $C^{2}$. Let $\Phi_{s}$ denote the (local) flow
of $-\nabla S$. Define $\alpha:\mathcal{M}\rightarrow\mathbb{R}^{+}\cup\{\infty\}$
by \[
\alpha(x):=\sup\{r>0\,:\, s\mapsto\Phi_{s}(x)\mbox{ is defined on }[0,r]\}.\]
An \emph{admissible time }is a differentiable function $\tau:\mathcal{M}\rightarrow\mathbb{R}$
such that \[
0\leq\tau(x)<\alpha(x)\ \ \ \mbox{for all }x\in\mathcal{M}.\]

Let $\mathcal{F}$ denote a family of subsets of $\mathcal{M}$, and
define \[
\mu:=\inf_{F\in\mathcal{F}}\sup_{x\in F}S(x).\]
Suppose that $\mu\in\mathbb{R}$. We say that $\mathcal{F}$ is \emph{$S$-forward
invariant }if the following holds: if $\tau$ is an admissible time
such that $\tau(x)=0$ if $S(x)\leq\mu-\delta$ for some $\delta>0$
then for all $F\in\mathcal{F}$ the set \[
F_{\tau}:=\left\{ \Phi_{\tau(x)}(x)\,:\, x\in F\right\} \]
 is also a member of $\mathcal{F}$.

For convenience, given a subset $\mathcal{V}\subseteq\mathcal{M}$
and $a\in\mathbb{R}$, let \[
K_{a,\mathcal{V}}:=\mbox{crit}\, S\cap S^{-1}(a)\cap\mathcal{V}\]
denote the set of critical points of $S$ in $\mathcal{V}$ at the
level $a$.\newline

Our main tool will be the following \emph{mountain pass theorem},
whose statement is similar to that of \cite[Proposition 6.3]{Contreras2006}.
In what follows, a \emph{strict local minimizer }of a function $S:\mathcal{M}\rightarrow\mathbb{R}$
is a point $x\in\mathcal{M}$ such that there exists a neighborhood
$\mathcal{N}$ of $x$ such that $S(y)>S(x)$ for all $y\in\mathcal{N}\backslash\{x\}$.
\begin{thm}
\label{thm:mountain pass theorem}Let $\mathcal{M}$ be a Riemannian
Hilbert manifold and $S:\mathcal{M}\rightarrow\mathbb{R}$ a function
of class $C^{2}$. Suppose we are given a sequence $(\mathcal{F}_{n})$
of families of subsets of $\mathcal{M}$ with $\mathcal{F}_{n}\subseteq\mathcal{F}_{n+1}$
for all $n\in\mathbb{N}$. Set $\mathcal{F}_{\infty}:=\bigcup_{n}\mathcal{F}_{n}$.
Set \[
\mu_{\infty}:=\inf_{F\in\mathcal{F}_{\infty}}\sup_{x\in F}S(x).\]
Suppose in addition that:
\begin{enumerate}
\item $\mathcal{F}_{\infty}$ is $S$-forward invariant, and the sets $F\in\mathcal{F}_{\infty}$
are connected;
\item $\mu_{\infty}\in\mathbb{R}$;
\item the flow $\Phi_{s}$ of $-\nabla S$ is relatively complete on $\{\mu_{\infty}-\eta\leq S\leq\mu_{\infty}+\eta\}$
for some $\eta>0$;
\item there are closed subsets $(\mathcal{U}_{h})$ of $\mathcal{M}$ such
that for all $\varepsilon>0$, there exists $n(\varepsilon)\in\mathbb{N}$
such that for all $n\geq n(\varepsilon)$ there exists $F\in\mathcal{F}_{n}$
and $0<\varepsilon_{1}(n)<\varepsilon$ such that \[
F\subseteq\{S\leq\mu_{\infty}-\varepsilon_{1}(n)\}\cup\left(\mathcal{U}_{n}\cap\{S\leq\mu_{\infty}+\varepsilon\}\right);\]

\item there are closed subsets $(\mathcal{V}_{n})$ and a sequence $(r_{n})\subseteq\mathbb{R}^{+}$
such that \[
\mathcal{B}_{r_{n}}(\mathcal{U}_{n}):=\{x\in\mathcal{M}\,:\,\mbox{\emph{dist}}(x,\mathcal{U}_{n})<r_{n}\}\subseteq\mathcal{V}_{n},\]
and such that $S|_{\mathcal{V}_{n}}$ satisfies the Palais-Smale condition
at the level $\mu_{\infty}$.
\end{enumerate}
Then if\[
\mathcal{V}_{\infty}:=\bigcup_{n\in\mathbb{N}}\mathcal{V}_{n},\]
$S$ has a critical point $x\in\mathcal{V}_{\infty}$ with $S(x)=\mu_{\infty}$,
that is, \[
K_{\mu_{\infty},\mathcal{V}_{\infty}}\ne\emptyset.\]
Moreover if \begin{equation}
\sup_{F\in\mathcal{F}_{\infty}}\inf_{x\in F}S(x)<\mu_{\infty}\label{eq:strict local minimizer}\end{equation}
 then there is a point in $K_{\mu_{\infty},\mathcal{V}_{\infty}}$
which is not a strict local minimizer of $S$.
\end{thm}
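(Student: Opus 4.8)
The plan is to follow the standard deformation/minimax scheme, carefully tracking where the flow lives so that the Palais--Smale condition is only invoked on the sets $\mathcal{V}_n$. First I would argue by contradiction: suppose $K_{\mu_\infty,\mathcal{V}_\infty}=\emptyset$. The goal is then to deform a suitably chosen $F\in\mathcal{F}_n$ (for large $n$) below the level $\mu_\infty-\varepsilon_1(n)$ using the negative gradient flow $\Phi_s$, contradicting the definition of $\mu_\infty$ as an infimum. The subtlety, compared to the classical mountain pass theorem, is that $S$ need not satisfy Palais--Smale globally: it only does so on each $\mathcal{V}_n$. So I would first establish a quantitative deformation lemma localized to $\mathcal{V}_n$: since $S|_{\mathcal{V}_n}$ satisfies Palais--Smale at level $\mu_\infty$ and has no critical points there at that level, there exist $\delta_n>0$ and $c_n>0$ such that $\|d_xS\|\geq c_n$ on the slab $\{\mu_\infty-\delta_n\leq S\leq\mu_\infty+\delta_n\}\cap \mathcal{V}_n$ — this is the usual argument that failure would produce a Palais--Smale sequence converging to a critical point. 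Combining this lower bound on $\|\nabla S\|$ with the relative completeness hypothesis (3) and the buffer zone $\mathcal{B}_{r_n}(\mathcal{U}_n)\subseteq\mathcal{V}_n$ from (5), a flow line starting in $\mathcal{U}_n$ at level $\leq\mu_\infty+\varepsilon$ either drops below $\mu_\infty-\delta_n$ or stays inside $\mathcal{B}_{r_n}(\mathcal{U}_n)\subseteq\mathcal{V}_n$ while its energy decreases at rate $\geq c_n^2$, hence must drop below $\mu_\infty-\delta_n$ within time roughly $2\varepsilon/c_n^2$ (taking $\varepsilon$ small relative to $r_n$ and $c_n$).

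Next I would assemble this into an admissible-time deformation. Choose $\varepsilon$ small (smaller than $\delta_n$ and arranged so that the flow cannot escape the $r_n$-neighborhood in the relevant time), let $n\geq n(\varepsilon)$ and pick the corresponding $F\in\mathcal{F}_n$ and $\varepsilon_1(n)<\varepsilon$ from hypothesis (4), so $F\subseteq\{S\leq\mu_\infty-\varepsilon_1(n)\}\cup(\mathcal{U}_n\cap\{S\leq\mu_\infty+\varepsilon\})$. Define an admissible time $\tau:\mathcal{M}\to\mathbb{R}$ that vanishes on $\{S\leq\mu_\infty-\varepsilon_1(n)\}$ (so, a fortiori, wherever $S\leq\mu_\infty-\delta$ for the relevant $\delta$, satisfying the $S$-forward invariance requirement) and equals the fixed flow time $\approx 2\varepsilon/c_n^2$ on the part of $F$ sitting in $\mathcal{U}_n$; interpolate with a cutoff depending smoothly on $S$, and truncate $\tau$ below $\alpha(x)$ where necessary — here is where relative completeness on the slab is used to guarantee $\alpha(x)$ is large enough on the trajectories we care about. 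Then $F_\tau\in\mathcal{F}_n\subseteq\mathcal{F}_\infty$ by $S$-forward invariance, and by the localized deformation lemma $\sup_{x\in F_\tau}S(x)\leq\mu_\infty-\min\{\varepsilon_1(n),\delta_n\}<\mu_\infty$, contradicting the definition of $\mu_\infty$. This proves $K_{\mu_\infty,\mathcal{V}_\infty}\neq\emptyset$.

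For the final assertion, suppose in addition $\sup_{F\in\mathcal{F}_\infty}\inf_{x\in F}S(x)<\mu_\infty$, and suppose for contradiction that \emph{every} point of $K_{\mu_\infty,\mathcal{V}_\infty}$ is a strict local minimizer. The idea: near a strict local minimizer $x$ at level $\mu_\infty$, one can locally push $F$ around $x$ (using that the sets $F$ are connected, hypothesis (1)) without raising $S$ above $\mu_\infty$, and by the strict-minimizer property combined with the negative gradient flow outside a small ball around each such point, one deforms $F$ so that its sup drops strictly below $\mu_\infty$. Concretely: pick $F$ with $\inf_F S<\mu_\infty$; the critical set at level $\mu_\infty$ in $\mathcal{V}_\infty$ is, by the localized Palais--Smale condition, a compact subset $Z$ of isolated points (strict local minimizers are isolated critical points), so it is a finite set $\{p_1,\dots,p_m\}$; choose disjoint balls $B_i$ around the $p_i$ on which $S>\mu_\infty$ except at $p_i$, hence a uniform gap $S\geq\mu_\infty+\rho$ on $\partial B_i$; outside $\bigcup B_i$ one has $\|d_xS\|\geq c>0$ on the slab near $\mu_\infty$ (again by localized Palais--Smale applied to $\mathcal{V}_n$ large enough to contain the relevant region), so the gradient flow, with admissible time supported away from $\bigcup B_i$ and from $\{S\leq\mu_\infty-\delta\}$, pushes the part of $F$ sitting at levels in $[\mu_\infty-\varepsilon,\mu_\infty+\varepsilon]\setminus\bigcup B_i$ below $\mu_\infty-\delta$; since $F$ is connected and $\inf_F S<\mu_\infty$, $F$ cannot be entirely contained in $\bigcup B_i$, and after the deformation the image stays out of a neighborhood of each $p_i$ (the flow points radially inward toward the strict minimizer only on the very boundary, and one arranges $\tau$ to be zero there too). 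The upshot is $F_\tau\in\mathcal{F}_\infty$ with $\sup_{F_\tau}S<\mu_\infty$, again a contradiction.

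The hard part will be the last paragraph: making precise how the presence of strict local minimizers is \emph{harmless} — i.e. constructing the admissible time $\tau$ that is large enough off the balls $B_i$ to force the sup down, yet zero on $\{S\leq\mu_\infty-\delta\}$ and controlled near the $B_i$ so that $F_\tau$ does not accidentally acquire a point where $S$ increases, all while respecting $0\leq\tau<\alpha$. The connectedness of the $F$'s and the finiteness/isolatedness of $Z$ (coming from strict minimality plus localized Palais--Smale) are the two structural facts that make this possible; balancing the cutoff so the deformation is genuinely $\mathcal{F}_\infty$-admissible is the delicate bookkeeping. The first two paragraphs, by contrast, are a fairly routine localization of the standard quantitative deformation lemma, the only novelty being that every gradient-flow estimate must be confined to one of the $\mathcal{V}_n$ via the buffer-zone hypothesis (5).
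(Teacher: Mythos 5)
Your first paragraph, and the outline of the deformation argument for $K_{\mu_\infty,\mathcal{V}_\infty}\ne\emptyset$, is essentially correct and amounts to re-deriving the quantitative deformation lemma that the paper instead imports as a black box (Lemma \ref{lem:finding an admissable time}, taken from Contreras \cite[Lemma 6.2]{Contreras2006}). The localization to $\mathcal{V}_n$ via the buffer hypothesis (5), and the balance $2\varepsilon/c_n^2 < r_n^2/(2\varepsilon)$ that keeps the trajectory inside $\mathcal{B}_{r_n}(\mathcal{U}_n)$, is exactly the content of that lemma, so while you are doing more work than the paper, the structure of the contradiction with the definition of $\mu_\infty$ is the same.

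The second half of your proposal has a genuine gap, and it is in the place you yourself flag as hard. First, the finiteness of $K_{\mu_\infty,\mathcal{V}_\infty}$ is not justified: Palais--Smale holds only on each $\mathcal{V}_n$ separately, and $\mathcal{V}_\infty$ is an increasing union, so compactness of the critical set in $\mathcal{V}_\infty$ does not follow. More seriously, the strategy of building a deformation that pushes $F$ down \emph{outside} small balls $B_i$ around the minimizers cannot succeed as stated: points on and near $\partial B_i$ have $S$ close to $\mu_\infty$, the negative gradient flow near a strict local minimizer points \emph{toward} $p_i$ (where $S=\mu_\infty$), and $S$ is bounded below by $\mu_\infty$ on $B_i$, so no matter how long you flow the supremum of $S$ over the deformed set stays pinned near $\mu_\infty$. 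You cannot ``force the sup down'' off the balls while the set still threads through the $B_i$. The resolution in the paper is different: apply Lemma \ref{lem:finding an admissable time} with $\mathcal{N} = \mathcal{N}_0$, a (possibly infinite) union of neighborhoods $\mathcal{N}(x)$ on which $S\ge\mu_\infty$, obtaining $F_\tau \subseteq \mathcal{N}_0 \cup \{S\le\mu_\infty-\varepsilon_1(n)\}$. Since $S\ge\mu_\infty$ on $\mathcal{N}_0$ and $\varepsilon_1(n)>0$, these two sets are disjoint, so their union is disconnected; $F_\tau$ is connected by hypothesis (1); and $F_\tau$ still contains a point with $S<\mu_\infty-2\lambda_0$ (using \eqref{eq:strict local minimizer} and the fact that $\Phi_s$ only decreases $S$), so $F_\tau$ lies entirely in the sublevel set, contradicting the definition of $\mu_\infty$. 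The key idea you are missing is that you do not need to \emph{avoid} the neighborhoods of the minimizers at all -- you let the deformation land wherever it wants, and then use disjointness plus connectedness plus the presence of one low point to rule out the bad component.
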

The proof is an easy application of the following result, which can
be found as \cite[Lemma 6.2]{Contreras2006}.
\begin{lem}
\label{lem:finding an admissable time}Let $\mathcal{M}$ be a Riemannian
Hilbert manifold and $\mathcal{U}\subseteq\mathcal{V}\subseteq\mathcal{M}$
closed subsets such that $\mathcal{B}_{r}(\mathcal{U})\subseteq\mathcal{V}$
for some $r>0$. Let $S:\mathcal{M}\rightarrow\mathbb{R}$ be a $C^{2}$
function, and let $\mu\in\mathbb{R}$ be such that $S|_{\mathcal{V}}$
satisfies the Palais-Smale condition at the level $\mu$. Suppose
in addition that the flow $\Phi_{s}$ of $-\nabla S$ is relatively
complete on $\{\left|S-\mu\right|\leq\eta\}$ for some $\eta>0$. 

Then if $\mathcal{N}$ is any neighborhood of $K_{\mu,\mathcal{V}}$
relative to $\mathcal{V}$, for any $\lambda>0$ there exists $0<\varepsilon<\delta<\lambda$
such that for any $0<\varepsilon_{1}<\varepsilon$ there exists an
admissible time $\tau$ such that \[
\tau(x)=0\mbox{\ \ \ for all }x\in\{\left|S-\mu\right|\geq\delta\},\]
and such that if \[
F:=\{S\leq\mu-\varepsilon_{1}\}\cup\left(\mathcal{U}\cap\{S\leq\mu+\varepsilon\}\right),\]
then \[
F_{\tau}\subseteq\mathcal{N}\cup\{S\leq\mu-\varepsilon_{1}\}.\]
 \end{lem}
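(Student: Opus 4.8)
The plan is to run the usual quantitative deformation argument for the gradient flow $\Phi_{s}$ of $-\nabla S$, paying attention to the three \emph{relative} features of the statement: we must keep the deformed orbits inside the closed set $\mathcal{V}$ in order to invoke the Palais--Smale hypothesis, we are only allowed to move along $\Phi_{s}$ itself, and the deformation parameter has to be a genuine admissible time on all of $\mathcal{M}$.

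\emph{Step 1 (compactness and a choice of $\rho$).} First, $K:=K_{\mu,\mathcal{V}}$ is compact: any sequence in $K$ is a Palais--Smale sequence for $S|_{\mathcal{V}}$ at level $\mu$ (its derivatives vanish identically), so it subconverges, necessarily to a point of $\mathcal{V}$, which is a critical point of $S$ at level $\mu$ by continuity of $dS$, hence lies in $K$. Since $S\equiv\mu$ on the compact set $K$ and $\mathcal{N}$ is a neighbourhood of $K$ relative to $\mathcal{V}$, we may fix $\rho>0$ with $3\rho<r$, with $\mathcal{B}_{3\rho}(K)\subseteq\{\,|S-\mu|<\eta\,\}$, and with $\mathcal{B}_{3\rho}(K)\cap\mathcal{V}\subseteq\mathcal{N}$; in particular $\mathcal{B}_{2\rho}(K)\cap\mathcal{V}$ is a relatively open subset of $\mathcal{N}$.

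\emph{Step 2 (a gradient bound off $K$; the choice of $\delta,\varepsilon$).} The essential consequence of the relative Palais--Smale condition is that there are $\delta_{0}\in(0,\eta)$ and $b>0$ with $\|\nabla S(x)\|\ge b$ whenever $x\in\mathcal{V}$, $|S(x)-\mu|\le\delta_{0}$ and $x\notin\mathcal{B}_{\rho}(K)$. Indeed, were this false we could (taking $\delta_{0}=b=1/n$) produce a Palais--Smale sequence for $S|_{\mathcal{V}}$ at level $\mu$ staying a distance $\ge\rho$ from $K$; it subconverges to a point of $\mathcal{V}$, which as above lies in $K$, contradicting $\mathrm{dist}(\cdot,K)\ge\rho$. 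Shrinking $\delta_{0}$ we may also assume $2\delta_{0}<\lambda$. Now set $\delta:=\delta_{0}$ and pick $\varepsilon>0$ so small that $\varepsilon<\delta/2$ and $2\varepsilon/b^{2}<r^{2}/(8\delta_{0})=:\tau_{*}$; these $\varepsilon<\delta<\lambda$ do not depend on $\varepsilon_{1}$, as required.

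\emph{Step 3 (flow estimates on the interesting part of $F$).} Fix $0<\varepsilon_{1}<\varepsilon$ and let $x\in\mathcal{U}\cap\{S\le\mu+\varepsilon\}$. Along the flow $\tfrac{d}{ds}S(\Phi_{s}(x))=-\|\nabla S(\Phi_{s}(x))\|^{2}\le 0$, and by Cauchy--Schwarz the orbit has length $\int_{0}^{s}\|\nabla S(\Phi_{\sigma}(x))\|\,d\sigma\le\bigl(s\,(S(x)-S(\Phi_{s}(x)))\bigr)^{1/2}$. Let $h(x)$ be the first time the orbit meets $\mathcal{B}_{\rho}(K)\cup\{S\le\mu-\varepsilon_{1}\}$. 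For $s<\min\{h(x),\tau_{*}\}$ we have $\mu-\varepsilon_{1}<S(\Phi_{s}(x))\le\mu+\varepsilon$, so the length estimate bounds the distance travelled by $(\tau_{*}\cdot 2\varepsilon)^{1/2}<r/2$; since $x\in\mathcal{U}$ this keeps the orbit in $\mathcal{B}_{r}(\mathcal{U})\subseteq\mathcal{V}$, where (as $|S-\mu|<\delta_{0}$ and the orbit is still outside $\mathcal{B}_{\rho}(K)$) Step 2 gives $\tfrac{d}{ds}S\le-b^{2}$. Integrating, $S$ would reach $\mu-\varepsilon_{1}$ by time $(\varepsilon+\varepsilon_{1})/b^{2}<\tau_{*}$, so in fact $h(x)<\tau_{*}$; and at time $h(x)$ the orbit — still within distance $<r/2$ of $\mathcal{U}$, hence in $\mathcal{V}$ — either lies in $\{S\le\mu-\varepsilon_{1}\}$ or lies in $\overline{\mathcal{B}_{\rho}(K)}\cap\mathcal{V}\subseteq\mathcal{B}_{2\rho}(K)\cap\mathcal{V}\subseteq\mathcal{N}$. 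It may moreover be flowed a little further while remaining in $\mathcal{B}_{2\rho}(K)\cap\mathcal{V}$, so there is an open interval of stopping values for which the endpoint stays in $\mathcal{N}\cup\{S\le\mu-\varepsilon_{1}\}$.

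\emph{Step 4 (building $\tau$, and conclusion).} It remains to replace the prescription ``stop at $h(x)$'' by an honest admissible time $\tau:\mathcal{M}\to\mathbb{R}$ that is differentiable, vanishes on $\{\,|S-\mu|\ge\delta\,\}$, and satisfies $0\le\tau<\alpha$ everywhere. One smooths $h$ inside the open window of Step 3 (using that the first-hitting time is lower semicontinuous and that, on the orbits in question, $\alpha>\tau_{*}$, since they remain in $\mathcal{V}\cap\{S>\mu-\varepsilon_{1}\}\subseteq\{|S-\mu|<\eta\}$ where $\Phi$ is defined by relative completeness), truncates the result below a smooth positive lower bound for the lower semicontinuous function $\alpha$, and multiplies by a smooth cutoff $g$ equal to $1$ on $\{\,|S-\mu|\le\delta/2\,\}$ and to $0$ on $\{\,|S-\mu|\ge\delta\,\}$; this is precisely the construction of \cite[Lemma 6.2]{Contreras2006}. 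With such a $\tau$: a point of $F$ with $S\le\mu-\varepsilon_{1}$ has $S(\Phi_{\tau(x)}(x))\le S(x)\le\mu-\varepsilon_{1}$ by monotonicity, whatever $\tau(x)$ is; a point of $F$ in $\mathcal{U}\cap\{S\le\mu+\varepsilon\}$ with $S\ge\mu-\varepsilon_{1}$ satisfies $|S-\mu|\le\varepsilon<\delta/2$, so $g=1$ there and Step 3 sends it into $\mathcal{N}$ or into $\{S\le\mu-\varepsilon_{1}\}$. Hence $F_{\tau}\subseteq\mathcal{N}\cup\{S\le\mu-\varepsilon_{1}\}$, as required. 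The real work is Step 4: turning ``the first time the orbit reaches the good set'' into a globally defined differentiable admissible time without breaking the containment is the delicate, classical core of the deformation lemma, whereas Steps 1--3 are soft.
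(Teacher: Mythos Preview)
The paper does not actually prove this lemma: it is quoted verbatim as \cite[Lemma 6.2]{Contreras2006}, and the only proof that follows it in the paper is the proof of Theorem \ref{thm:mountain pass theorem}. Your proposal therefore cannot be compared to a proof in the paper, only to the argument in Contreras' paper that the author is invoking.

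That said, your sketch is a faithful outline of exactly that argument. Steps 1--3 are correct: compactness of $K_{\mu,\mathcal{V}}$, the gradient lower bound off a neighbourhood of $K_{\mu,\mathcal{V}}$ via the relative Palais--Smale condition, and the length/energy estimate showing that an orbit starting in $\mathcal{U}\cap\{S\le\mu+\varepsilon\}$ stays in $\mathcal{B}_r(\mathcal{U})\subseteq\mathcal{V}$ long enough to be pushed either below $\mu-\varepsilon_1$ or into $\mathcal{N}$. Your own Step 4 is, as you say, the delicate part, and you too end up citing \cite[Lemma 6.2]{Contreras2006} for the actual construction of a \emph{differentiable} admissible time out of the (merely semicontinuous) hitting time; so in the end your write-up and the paper agree in deferring the technical core to Contreras. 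One small point to tighten if you want Step 4 to stand on its own: you assert $\alpha>\tau_*$ on the relevant orbits, but relative completeness only gives $\alpha(x)>h(x)$, not $\alpha(x)>\tau_*$; the smoothing must therefore be done within the ``open window'' you identified around $h(x)$, not uniformly up to $\tau_*$.
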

\begin{proof}
\emph{(of Theorem \ref{thm:mountain pass theorem})}

We will show that $K_{\mu_{\infty},\mathcal{V}_{n}}\ne\emptyset$
for $n$ large enough. Fix $0<\varepsilon<\delta<\lambda:=1$ as in
the statement of Lemma \ref{lem:finding an admissable time}. By hypothesis
there exists $n(\varepsilon)\in\mathbb{N}$ such that for all $n\geq n(\varepsilon)$
there exists $0<\varepsilon_{1}(n)<\varepsilon$ and $F\in\mathcal{F}_{n}$
such that \[
F\subseteq\{S\leq\mu_{\infty}-\varepsilon_{1}(n)\}\cup\left(\mathcal{U}_{n}\cap\{S\leq\mu_{\infty}+\varepsilon\}\right).\]
For such $n$, $K_{\mu_{\infty},\mathcal{V}_{n}}\ne\emptyset$. Indeed,
if $K_{\mu_{\infty},\mathcal{V}_{n}}=\emptyset$, by Lemma \ref{lem:finding an admissable time},
there exists an admissible time $\tau$ such that $\tau\equiv0$ on
$\{S\leq\mu_{\infty}-\delta\}$, and such that $F_{\tau}$ satisfies
\[
F_{\tau}\subseteq\{S\leq\mu_{\infty}-\varepsilon_{1}(n)\}\]
(for we may take $\mathcal{N}=\emptyset$ in the statement of Lemma
\ref{lem:finding an admissable time}). Since $\mathcal{F}_{\infty}$
is forward invariant, $F_{\tau}\in\mathcal{F}_{\infty}$. This contradicts
the definition of $\mu_{\infty}$. 

To prove the last statement, suppose that $K_{\mu_{\infty},\mathcal{V}_{\infty}}$
consists entirely of strict local minimizers, and \eqref{eq:strict local minimizer}
holds. Choose $\lambda_{0}>0$ such that \[
\sup_{F\in\mathcal{F}_{\infty}}\inf_{x\in F}S(x)<\mu_{\infty}-2\lambda_{0}.\]
For each $x\in K_{\mu_{\infty},\mathcal{V}_{\infty}}$, let $\mathcal{N}(x)$
denote a neighborhood of $x$ such that $S(y)>S(x)$ for all $y\in\mathcal{N}(x)\backslash\{x\}$,
and let \[
\mathcal{N}_{0}:=\bigcup_{x\in K_{\mu_{\infty},\mathcal{V}_{\infty}}}\mathcal{N}(x)\]
and $\mathcal{N}_{n}:=\mathcal{N}_{0}\cap\mathcal{V}_{n}$ for each
$n\in\mathbb{N}$. Let $0<\varepsilon<\delta<\lambda_{0}$ be given
by Lemma \ref{lem:finding an admissable time} for $\mathcal{N}_{0}$.
By hypothesis there exists $n(\varepsilon)\in\mathbb{N}$ such that
for all $n\geq n(\varepsilon)$ there exists $0<\varepsilon_{1}(n)<\varepsilon$
and $F\in\mathcal{F}_{n}$ such that \[
F\subseteq\{S\leq\mu_{\infty}-\varepsilon_{1}(n)\}\cup\left(\mathcal{U}_{n}\cap\{S\leq\mu_{\infty}+\varepsilon\}\right).\]
By Lemma \ref{lem:finding an admissable time}, there exists an admissible
time $\tau$ such that $\tau\equiv0$ on $\{S\leq\mu_{\infty}-\delta\}$
and such that \[
F_{\tau}\subseteq\mathcal{N}_{n}\cup\{S\leq\mu_{\infty}-\varepsilon_{1}(n)\}\subseteq\mathcal{N}_{0}\cup\{S\leq\mu_{\infty}-\varepsilon_{1}(n)\}.\]
By definition of $\mathcal{N}_{0}$, the sets $\mathcal{N}_{0}$ and
$\{S\leq\mu_{\infty}-\varepsilon_{1}(n)\}$ are disjoint, so $\mathcal{N}_{0}\cup\{S\leq\mu_{\infty}-\varepsilon_{1}(n)\}$
is disconnected. Since $F_{\tau}$ is connected by hypothesis, we
either have $F_{\tau}\subseteq\mathcal{N}_{0}$ and $F_{\tau}\cap\{S\leq\mu_{\infty}-\varepsilon_{1}(n)\}=\emptyset$,
or $F_{\tau}\subseteq\{S\leq\mu_{\infty}-\varepsilon_{1}(n)\}$. The
former fails since $\varepsilon_{1}(n)<\varepsilon<\lambda_{0}$,
and the value of $S$ decreases under $\Phi_{s}$, and the latter
contradicts the definition of $\mu_{\infty}$. The proof is complete. 
\end{proof}

\subsection*{Proof of the second statement of Theorem \ref{thm:theorem A}}

$\ $\vspace{6 pt}

The main tool we will use in the proof of the second statement of
Theorem \ref{thm:theorem A} will be Theorem \ref{thm:mountain pass theorem}.
The first step however is the following result, whose statement and
proof closely parallel \cite[Proposition C]{Contreras2006}.
\begin{prop}
\underbar{\label{sub:mountain pass 1-1}}Let $k\in\mathbb{R}^{+}$.
Then there exists a constant $\mu_{0}>0$ such that if $f:I\rightarrow\Lambda_{0}\times\mathbb{R}^{+}$
is any path such that, writing $f(0)=(x_{0},T_{0})$ and $f(1)=(x_{1},T_{1})$,
it holds that:
\begin{enumerate}
\item $S_{k}(x_{0},T_{0})<0$;
\item $x_{1}$ is the constant curve $x_{1}(t)\equiv x_{0}(0)$;
\end{enumerate}
then \[
\sup_{s\in I}S_{k}(f(s))>\mu_{0}>0.\]
\end{prop}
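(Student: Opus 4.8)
The plan is to establish the required mountain pass geometry by producing a positive ``wall height'': I will show that $S_{k}$ stays above a fixed positive constant on the set of contractible loops of a suitably small length, and that any path of the type considered must pass through such a loop. This parallels the proof of \cite[Proposition C]{Contreras2006}; the one point needing adaptation is that the estimate on $\int_{C(x)}\sigma$ must be carried out downstairs on the compact manifold $M$, rather than via a primitive $\theta$ of $\tilde{\sigma}$, since $\theta$ need not be bounded when $c(g,\sigma)=\infty$.

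\emph{Step 1: an isoperimetric estimate.} I would first show that there are constants $\rho_{0},C_{1}>0$, depending only on $(M,g)$ and on $\|\sigma\|_{\infty}:=\sup_{q\in M}|\sigma_{q}|<\infty$, such that every $x\in\Lambda_{0}$ with $\ell(x):=\int_{0}^{1}|\dot{x}(t)|\,dt<\rho_{0}$ satisfies $\bigl|\int_{C(x)}\sigma\bigr|\le C_{1}\,\ell(x)^{2}$. The idea is that such a loop has image of diameter at most $\rho_{0}/2$, hence lies in a geodesically convex ball over which it bounds a ``geodesic cone'' capping disc $\boldsymbol{x}:D^{2}\to M$ of area $O(\ell(x)^{2})$; then \eqref{eq:capping disc} gives $\bigl|\int_{C(x)}\sigma\bigr|=\bigl|\int_{D^{2}}\boldsymbol{x}^{*}\sigma\bigr|\le\|\sigma\|_{\infty}\,\mathrm{Area}(\boldsymbol{x})$. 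I expect this to be the main obstacle: the argument of Lemma \ref{lem:Cun goes to zero} must be avoided (it uses a bounded primitive), and the $W^{1,2}$ rather than smooth regularity of $x$ must be dealt with by a routine approximation. Note also that the \emph{quadratic} dependence on $\ell(x)$ is essential, since a merely linear bound would yield a positive wall height only for $k$ sufficiently large.

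\emph{Step 2: a pointwise lower bound.} Next I would combine this with the elementary estimate on the remaining terms. For $(x,T)\in\Lambda_{0}\times\mathbb{R}^{+}$ the Cauchy--Schwarz inequality gives $\int_{0}^{1}\frac{1}{2T}|\dot{x}|^{2}\,dt\ge\frac{1}{2T}\ell(x)^{2}$, and minimizing $\frac{\ell^{2}}{2T}+kT$ over $T>0$ yields the value $\ell\sqrt{2k}$. Hence, whenever $\ell(x)<\rho_{0}$, combining with Step 1,
\[
S_{k}(x,T)\ \ge\ \ell(x)\sqrt{2k}-C_{1}\ell(x)^{2}\ =\ \ell(x)\bigl(\sqrt{2k}-C_{1}\ell(x)\bigr).
\]
Setting $\ell_{1}:=\frac{1}{2}\min\{\rho_{0},\sqrt{2k}/C_{1}\}$ and $\mu_{0}:=\frac{1}{4}\ell_{1}\sqrt{2k}>0$ (a constant depending only on $k$ and $(M,g,\sigma)$), this shows $S_{k}(x,T)\ge 0$ whenever $\ell(x)\le\ell_{1}$, and $S_{k}(x,T)\ge\frac{1}{2}\ell_{1}\sqrt{2k}>\mu_{0}$ whenever $\ell(x)=\ell_{1}$.

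\emph{Step 3: crossing the wall.} Finally, given a path $f=(x_{s},T_{s}):I\to\Lambda_{0}\times\mathbb{R}^{+}$ with $S_{k}(x_{0},T_{0})<0$ and $x_{1}$ constant, I would observe that $s\mapsto\ell(x_{s})$ is continuous (indeed Lipschitz) for the $W^{1,2}$-topology, with $\ell(x_{1})=0$, and that $\ell(x_{0})>\ell_{1}$ — for otherwise $\ell(x_{0})\le\ell_{1}<\rho_{0}$ and Step 2 would force $S_{k}(x_{0},T_{0})\ge 0$, contradicting the hypothesis. By the intermediate value theorem there is then $s^{*}\in(0,1)$ with $\ell(x_{s^{*}})=\ell_{1}$, whence $S_{k}(f(s^{*}))\ge\frac{1}{2}\ell_{1}\sqrt{2k}>\mu_{0}$ by Step 2. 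Therefore $\sup_{s\in I}S_{k}(f(s))>\mu_{0}>0$, as claimed.
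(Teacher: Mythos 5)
Your proof is correct, and its overall skeleton (establish a positive wall height for contractible loops of a fixed small length, then cross it by the intermediate value theorem applied to $s\mapsto\ell(x_{s})$) matches the paper's. Where you genuinely deviate is in the key estimate of Step 1: the paper invokes \cite[Lemma 5.1]{Contreras2006}, a \emph{local} quadratic bound $|\int_{x}\theta|\le\beta\,\ell(x)^{2}$ for a primitive $\theta$ on $\tilde M$ and loops confined to a small neighbourhood $W$ of a fixed point, whereas you prove a \emph{global} isoperimetric-type bound $|\int_{C(x)}\sigma|\le C_{1}\ell(x)^{2}$ directly downstairs via the area of a geodesic-cone capping disc. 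Your version buys two things. First, it never mentions the primitive $\theta$, so there is nothing to worry about when $c(g,\sigma)=\infty$ (the paper's lemma is also fine there, being purely local, but your formulation makes the irrelevance of $\|\theta\|_{\infty}$ transparent). Second, because your estimate holds uniformly for all short contractible loops regardless of basepoint, you sidestep the case split in the paper (``does some $x_{s}$ leave $W$ or not?'') and the accompanying need to ensure the constant $\beta$ and neighbourhood $W$ can be chosen uniformly over $q\in M$ — a point the paper handles only implicitly. You then deduce $\ell(x_{0})>\ell_{1}$ directly from $S_{k}(x_{0},T_{0})<0$ and Step 2, which is cleaner than the paper's dichotomy. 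The one place your argument is lightly sketched is the existence of a geodesic cone of area $O(\ell^{2})$: this is standard (work in normal coordinates at $x(0)$, use that a loop of length $\ell$ has diameter $\le\ell/2$ and hence sits in a convex ball once $\ell<\rho_{0}$ with $\rho_{0}$ below the convexity radius, and bound the cone area by $\tfrac{1}{2}\int_{0}^{1}|x(t)||\dot x(t)|\,dt$ up to a metric-comparison factor), but it should be spelled out in a written version, including the approximation of a $W^{1,2}$ loop by smooth ones. Minor: your minimisation in Step 2 yielding $\ell\sqrt{2k}$ is the correct value; the paper's displayed $\min P=\sqrt{\rho^{2}/2k}$ appears to be a typo, though it does not affect the argument.
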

\begin{rem*}
It is important to note that the constant $\mu_{0}$ \textit{\emph{does
}}\textit{not }\textit{\emph{depend on $T_{1}$.}}
\end{rem*}
We shall need the following lemma, taken from \cite[Lemma 5.1]{Contreras2006},
in the proof of Proposition \ref{sub:mountain pass 1-1}. As before,
in the statement of the lemma, $l(x):=\int_{0}^{1}\left|\dot{x}(t)\right|dt$.
\begin{lem}
Let $\theta\in\Omega^{1}(\tilde{M})$. Given any $q\in\tilde{M}$
and any open neighborhood $V\subseteq\tilde{M}$ of $q$, there exists
an open neighborhood $W\subseteq V$ of $q$ and a constant $\beta>0$
such that for any closed curve $x:[0,1]\rightarrow W$ it holds that
\[
\left|\int_{x}\theta\right|\leq\beta l(x)^{2}.\]
\end{lem}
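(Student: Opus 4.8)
The plan is to make the statement entirely local and then combine Stokes' theorem with an explicit contraction of the loop to a point. Since the conclusion only concerns curves lying in a small neighbourhood of $q$, I would first fix a coordinate chart around $q$ whose closure is a compact subset of $V$, and identify a neighbourhood of $q$ with a Euclidean ball in $\mathbb{R}^{n}$, $n=\dim\tilde{M}$. Let $W$ be the preimage of a slightly smaller concentric ball, so that $\overline{W}\subseteq V$ is compact and $W$ is convex in the chart coordinates. On $\overline{W}$ there is a constant $\beta_{0}$ with $|d\theta_{p}(v,w)|\le\beta_{0}\,|v|_{\mathrm{eucl}}|w|_{\mathrm{eucl}}$ for all $p\in\overline{W}$ and all $v,w$, and the Euclidean norm of the chart is uniformly comparable to the Riemannian norm, say $|v|_{\mathrm{eucl}}\le\kappa\,|v|_{g}$ on $\overline{W}$. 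These two estimates are the only input needed.

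Given a closed curve $x:[0,1]\to W$, I would use the straight-line homotopy
\[
H(s,t):=(1-s)\,x(t)+s\,x(0),\qquad (s,t)\in[0,1]^{2},
\]
which by convexity of $W$ maps into $W\subseteq V$, so that $H^{*}\theta$ is defined. Since $x(0)=x(1)$, the map $H$ is constant along the top edge $\{s=1\}$ and along both vertical edges $\{t=0\}$ and $\{t=1\}$, and equals $x$ along the bottom edge; hence Stokes' theorem for the $1$-form $H^{*}\theta$ on the square collapses to
\[
\int_{x}\theta=\int_{[0,1]^{2}}H^{*}d\theta .
\]
(For merely absolutely continuous $x$ one first treats smooth loops and then passes to the limit, using that $x\mapsto\int_{x}\theta$ and $x\mapsto l(x)$ are continuous on $\Lambda_{0}$.)

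To finish I would estimate the integrand pointwise. One has $\partial_{s}H=x(0)-x(t)$ and $\partial_{t}H=(1-s)\dot{x}(t)$, so $|(H^{*}d\theta)(\partial_{s},\partial_{t})|\le\beta_{0}\,|x(t)-x(0)|_{\mathrm{eucl}}\,|\dot{x}(t)|_{\mathrm{eucl}}$. Bounding $|x(t)-x(0)|_{\mathrm{eucl}}\le\int_{0}^{t}|\dot{x}|_{\mathrm{eucl}}\le\kappa\,l(x)$ and $|\dot{x}(t)|_{\mathrm{eucl}}\le\kappa\,|\dot{x}(t)|_{g}$, and integrating over $[0,1]^{2}$ (with the harmless factor $(1-s)\le 1$), one gets $\bigl|\int_{x}\theta\bigr|\le\beta_{0}\kappa^{2}\,l(x)^{2}$, so the lemma holds with $\beta:=\beta_{0}\kappa^{2}$ (or any fixed multiple thereof).

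There is no genuine obstacle in this argument. The only points that need a little care are arranging that $W$ be convex, so that the nullhomotopy $H$ never leaves the set $V$ on which $\theta$ is defined, and keeping the bookkeeping between the coordinate Euclidean length used in the homotopy estimate and the Riemannian length $l(x)$ appearing in the statement straight. An alternative to the explicit cone $H$ would be to span $x$ by a disc of area $\le l(x)^{2}/4\pi$ via the Euclidean isoperimetric inequality and integrate $d\theta$ over it, but the direct homotopy above avoids even that.
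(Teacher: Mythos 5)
Your proof is correct and self-contained. The paper does not actually supply an argument here: it simply cites \cite[Lemma 5.1]{Contreras2006}, so there is no internal proof to compare against. Your approach — choose a chart around $q$ and a convex $W$ with $\overline{W}\subseteq V$ compact, contract the loop by the straight-line homotopy $H$, apply Stokes to rewrite $\int_x\theta$ as $\int_{[0,1]^2}H^*d\theta$, and then bound the integrand using $|\partial_s H|\le\kappa\,l(x)$ and $|\partial_t H|\le\kappa\,|\dot x|_g$ together with a uniform bound on $d\theta$ over $\overline W$ — is exactly the standard isoperimetric-type argument one expects, and it is almost certainly what Contreras does. Two minor points worth noting: (i) the constant you obtain is in fact $\tfrac{1}{2}\beta_0\kappa^2$ after integrating the factor $(1-s)$ over $s\in[0,1]$, so the slack you left yourself is genuine; (ii) convexity of $W$ in chart coordinates is the essential geometric hypothesis that makes the homotopy $H$ stay inside the domain of the chart, and you correctly flag it — without it Stokes cannot be applied. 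The remark about passing from smooth to $W^{1,2}$ loops by density is appropriate, though one can also run the Stokes computation directly for Lipschitz (hence absolutely continuous) curves via Fubini, since $H$ is then Lipschitz on the square.
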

\begin{proof}
\emph{(of Proposition \ref{sub:mountain pass 1-1})}

Fix a point $q\in M$, and choose a neighborhood $W\subseteq M$ of
$q$ small enough such that the conclusion of the lemma above holds.
Pick $\rho\in\mathbb{R}^{+}$ such that \[
0<\rho<\min\left\{ \frac{1}{2}\mbox{diam}\, W,\sqrt{\frac{k}{2\beta^{2}}}\right\} .\]
Write $f(s)=(x_{s},T_{s})$, so $x_{s}\in\Lambda_{0}$ for all $s$.
We claim that there exists $s_{0}\in(0,1)$ such that $l(x_{s_{0}})=\rho$.
Since the functional $s\mapsto l(x_{s})$ is continuous and $l(x_{0})=0$
it suffices to show that there exists $s_{1}\in[0,1)$ such that $l(x_{s_{1}})>\rho$. 

If there exists $s_{1}\in[0,1)$ such that $x_{s_{1}}(I)\varsubsetneq W$
then we are done, since then \[
l(x_{s_{1}})\geq\mbox{dist}(q,W^{c})>\frac{1}{2}\mbox{diam}\, W>\rho.\]
The other possibility is that $x_{s}(I)\subseteq W$ for all $s\in I$.
In this case we claim that we may take $s_{1}=0$, that is, $l(x_{0})>\rho$.
By assumption if $y_{0}(t)=x_{0}(t/T_{0})$ we have\begin{align}
0>S_{k}(x_{0},T_{0}) & =\int_{0}^{1}\frac{1}{2T_{0}}\left|\dot{x}_{0}(t)\right|^{2}dt+kT_{0}-\int_{C(x_{0})}\sigma\nonumber \\
 & \geq\frac{1}{2T_{0}}l(x_{0})^{2}+kT_{0}-\left|\int_{x_{0}}\theta\right|\nonumber \\
 & \geq\left(\frac{1}{2T_{0}}-\beta\right)l(x_{0})^{2}+kT_{0},\label{eq:in W}\end{align}
where the second inequality came from \eqref{eq:cs ineq} and the
third from the previous lemma. From this it follows that $T_{0}>\frac{1}{2\beta}$,
and thus \[
l(x_{0})^{2}>\frac{kT_{0}}{\beta-\frac{1}{2T_{0}}}>\frac{k}{2\beta^{2}}>\rho^{2}.\]
and we are done as before.

We now claim that $S_{k}(f(s_{0}))>0$, which will complete the proof.
Since $x_{s_{0}}\in C_{M}^{\textrm{ac}}(q,q)$ and $l(x_{s_{0}})<\frac{1}{2}\mbox{diam}\, W$,
we have $x_{s_{0}}(I)\subseteq W$. In particular, \eqref{eq:in W}
holds for $x_{s_{0}}$ and so we have \[
S_{k}(f(s_{0}))\geq\left(\frac{1}{2T_{s_{0}}}-\beta\right)\ell^{2}+kT_{s_{0}}=P(T_{s_{0}})\geq\min_{t\in\mathbb{R}^{+}}P(t),\]
where \[
P(t):=\left(\frac{1}{2t}-\beta\right)\rho^{2}+kt.\]
It is elementary to see that \[
\min_{t\in\mathbb{R}^{+}}P(t)=\sqrt{\frac{\rho^{2}}{2k}}=:\mu_{0}>0,\]
and this completes the proof.
\end{proof}
The next lemma will be needed in order to prove relative completeness
of the flow of $-\nabla S_{k}$ on any interval not containing zero.
\begin{lem}
\label{lem:hausdorff lemma}There exists a constant $C>0$ such that
for any $(x_{0},T_{0})\in\Lambda_{M}\times\mathbb{R}^{+}$ and any
$r>0$, if $(x_{1},T_{1})\in\Lambda_{M}\times\mathbb{R}^{+}$ satisfies
\[
\mbox{\emph{dist}}((x_{0},T_{0}),(x_{1},T_{1}))<r,\]
then\[
\left|T_{0}-T_{1}\right|<r\]
and \[
\textrm{\emph{dist}}_{\textrm{\emph{HD}}}(x_{0},x_{1})<Cr.\]

\end{lem}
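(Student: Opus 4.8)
The plan is to reduce the statement to two elementary facts: that the length of a path in $\Lambda_M\times\mathbb{R}^+$ controls both the displacement in the $\mathbb{R}^+$-factor and the $W^{1,2}$-displacement of the loop component, and that the one-dimensional Sobolev embedding $W^{1,2}(I)\hookrightarrow C^0(I)$ turns the latter into a $C^0$-estimate. We may assume $x_0$ and $x_1$ lie in the same component of $\Lambda_M$, since otherwise the hypothesis $\mathrm{dist}((x_0,T_0),(x_1,T_1))<r$ is void.

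First I would note that the metric \eqref{eq:standard metric} is the orthogonal product of the induced $W^{1,2}$-metric on $\Lambda_M$ and the Euclidean metric on the $\mathbb{R}^+$-factor, so that any path $s\mapsto(x(s),T(s))$ in $\Lambda_M\times\mathbb{R}^+$ joining $(x_0,T_0)$ to $(x_1,T_1)$ has length $\int_0^1\bigl(\left|\dot x(s)\right|_{1,2}^2+\dot T(s)^2\bigr)^{1/2}ds$, which is at least $\int_0^1\left|\dot T(s)\right|ds\geq\left|T_0-T_1\right|$ and also at least $\int_0^1\left|\dot x(s)\right|_{1,2}ds$. Taking the infimum over such paths gives $\left|T_0-T_1\right|\leq\mathrm{dist}((x_0,T_0),(x_1,T_1))<r$, which is the first assertion.

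For the second assertion I would observe that $\int_0^1\left|\dot x(s)\right|_{1,2}ds$ is the length of the path $s\mapsto x(s)$ inside $\Lambda_M$, and hence inside $\Lambda_{\mathbb{R}^d}$, a closed affine subspace of the flat Hilbert space $\bigl(W^{1,2}(\mathbb{R}^d),\langle\cdot,\cdot\rangle_{1,2}\bigr)$. The fundamental theorem of calculus in this space gives $x_0-x_1=-\int_0^1\dot x(s)ds$, so $\left|x_0-x_1\right|_{1,2}\leq\int_0^1\left|\dot x(s)\right|_{1,2}ds$, and combining with the previous paragraph, $\left|x_0-x_1\right|_{1,2}<r$. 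Now for any $\xi\in W^{1,2}$ and any $t$, Cauchy--Schwarz gives $\left|\xi(t)\right|\leq\left|\xi(0)\right|+\int_0^1\left|\dot\xi\right|\leq\left|\xi(0)\right|+\bigl(\int_0^1\left|\dot\xi\right|^2\bigr)^{1/2}\leq\sqrt{2}\left|\xi\right|_{1,2}$; applying this to $\xi=x_0-x_1$, regarded as a $W^{1,2}$ curve in $\mathbb{R}^d$, yields $\sup_t\left|x_0(t)-x_1(t)\right|<\sqrt{2}\,r$.

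Finally, since $x_1(t)\in x_1(\mathbb{T})$ for every $t$ and symmetrically, this last bound dominates the Hausdorff distance of the images $x_0(\mathbb{T})$ and $x_1(\mathbb{T})$ for the Euclidean metric of $\mathbb{R}^d$; if $\mathrm{dist}_{\mathrm{HD}}$ is taken with respect to the Riemannian metric of $M$, I would invoke compactness of $M$ --- local bi-Lipschitz equivalence of the two distances together with $\mathrm{dist}_g(p,q)\leq\mathrm{diam}_g M$ --- to produce $C_1$ with $\mathrm{dist}_g\leq C_1\,\mathrm{dist}_{\mathbb{R}^d}$ on $M\times M$, and set $C:=\sqrt{2}\,C_1$. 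None of this is a real obstacle; the only points needing care are recognizing $\Lambda_{\mathbb{R}^d}$ as a flat affine Hilbert space, so that path-lengths in $\Lambda_M$ bound from below the $W^{1,2}$-norm difference of the endpoints, and, if required, the passage from the Euclidean to the intrinsic distance on the compact manifold $M$.
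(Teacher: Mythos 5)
Your proof is correct, and it supplies an argument that the paper itself omits: the paper simply cites \cite[Lemma 2.3]{Contreras2006} and remarks that the proof there ``goes through without any changes'' since the metric \eqref{eq:standard metric} is being used. Your argument makes precise exactly why the simpler product metric makes things work: the orthogonal product structure immediately gives $|T_0-T_1|\leq\mathrm{dist}$, and the flatness of the ambient $\langle\cdot,\cdot\rangle_{1,2}$-Hilbert space (of which $\Lambda_{\mathbb{R}^d}$ is a closed \emph{linear}, not merely affine, subspace --- a harmless slip) gives $|x_0-x_1|_{1,2}\leq\mathrm{dist}_{\Lambda_M}(x_0,x_1)\leq\mathrm{dist}((x_0,T_0),(x_1,T_1))$, since path lengths in the submanifold $\Lambda_M$ dominate path lengths in $\Lambda_{\mathbb{R}^d}$ and the latter are bounded below by the norm of the endpoint difference. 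The Sobolev bound $|\xi(t)|\leq\sqrt{2}|\xi|_{1,2}$ is computed correctly and the passage from the chordal to the intrinsic Riemannian distance via compactness is standard. In fact you prove something slightly sharper than the statement, namely the uniform $C^0$ bound $\sup_t|x_0(t)-x_1(t)|<\sqrt{2}\,r$, from which both halves of the Hausdorff distance estimate follow at once.

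There are no gaps. Two small points worth keeping in mind for context (neither is an error in your argument): your opening observation that the hypothesis is vacuous across components of $\Lambda_M$ is the right way to sidestep the fact that $\mathrm{dist}$ may be $+\infty$; and when this lemma is applied in Corollary \ref{cor:getting the Lm}, one further needs that the homotopy from $x_0$ to $x_1$ produced by the short path in $\Lambda_M$ lifts to $\tilde{M}$ to give compatible lifts $\tilde{x}_0,\tilde{x}_1$ with the same Hausdorff estimate --- but that is an issue for the corollary, not for the lemma as stated, and your proof of the lemma is complete as it stands.
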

This result is essentially proved in \cite[Lemma 2.3]{Contreras2006};
there a different metric is used on $\Lambda_{M}\times\mathbb{R}^{+}$
which means an additional condition must be imposed in the statement
of the lemma. In our situation, since we are working with the standard
metric \eqref{eq:standard metric} on $\Lambda_{M}\times\mathbb{R}^{+}$
this additional condition is not needed, and the proof in \cite{Contreras2006}
goes through without any changes.
\begin{cor}
\label{cor:getting the Lm}Let $K\subseteq\tilde{M}$ and $B>0$.
Let \[
\mathcal{U}:=\left\{ (x,T)\in\Lambda_{0}^{K}\times\mathbb{R}^{+}\,:\, T\leq B\right\} .\]
Let $C$ be as in the statement of Lemma \ref{lem:hausdorff lemma}.
Then if $L\subseteq\tilde{M}$ satisfies \[
\left\{ q\in\tilde{M}\,:\,\mbox{\emph{dist}}_{\tilde{g}}(q,q')\leq Cr\mbox{ for some }q'\in K\right\} \subseteq L\]
and we set \[
\mathcal{V}:=\left\{ (x,T)\in\Lambda_{0}^{L}\times\mathbb{R}^{+}\,:\, T\leq B+r\right\} \]
then \[
\mathcal{B}_{r}(\mathcal{U})\subseteq\mathcal{V}.\]
\end{cor}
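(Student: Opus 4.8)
The plan is to unwind the definitions and chain together the two estimates from Lemma \ref{lem:hausdorff lemma}. Let $(x_1,T_1)\in\mathcal{B}_r(\mathcal{U})$, so there exists $(x_0,T_0)\in\mathcal{U}$ with $\mathrm{dist}((x_0,T_0),(x_1,T_1))<r$. We must verify the two conditions defining $\mathcal{V}$: first that $x_1\in\Lambda_0^L$, and second that $T_1\leq B+r$.

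The second condition is immediate: since $(x_0,T_0)\in\mathcal{U}$ we have $T_0\leq B$, and Lemma \ref{lem:hausdorff lemma} gives $|T_0-T_1|<r$, so $T_1<T_0+r\leq B+r$. For the first condition, we know $x_0\in\Lambda_0^K$, so there is a lift $\tilde{x}_0:\mathbb{T}\to\tilde{M}$ with $\tilde{x}_0(\mathbb{T})\subseteq K$. Lift the $C^0$-small homotopy from $x_0$ to $x_1$ — or more directly, use that Lemma \ref{lem:hausdorff lemma} gives $\mathrm{dist}_{\mathrm{HD}}(x_0,x_1)<Cr$ — to obtain a lift $\tilde{x}_1$ of $x_1$ whose image lies within Hausdorff distance $\leq Cr$ of $K$ in $\tilde{M}$; hence $\tilde{x}_1(\mathbb{T})\subseteq\{q:\mathrm{dist}_{\tilde{g}}(q,q')\leq Cr\text{ for some }q'\in K\}\subseteq L$, so $x_1\in\Lambda_0^L$. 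One small point to address here is that the Hausdorff-distance estimate is stated for the curves $x_0,x_1$ on $M$; I would note that because $r$ can be taken small (or the estimate applied on the universal cover, where the metric $\tilde{g}$ is the pullback of $g$ and $x_1$ is $C^0$-close to $x_0$), the lift $\tilde{x}_1$ obtained by lifting the homotopy along $\tilde{x}_0$ stays within the prescribed neighborhood of $K$.

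Combining the two conditions gives $(x_1,T_1)\in\mathcal{V}$, which is exactly $\mathcal{B}_r(\mathcal{U})\subseteq\mathcal{V}$.

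The only genuinely delicate point — and the one I would spell out most carefully — is the lift-selection step: transferring the $C^0$-closeness/Hausdorff bound between $x_0$ and $x_1$ on $M$ into a statement about a specific lift $\tilde{x}_1$ sitting near $K\subseteq\tilde{M}$. Everything else is a one-line application of Lemma \ref{lem:hausdorff lemma} plus the triangle inequality for $T$, so I expect the proof to be very short.
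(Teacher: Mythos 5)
Your proof follows the same route as the paper: take $(x_1,T_1)\in\mathcal{B}_r(\mathcal{U})$, produce $(x_0,T_0)\in\mathcal{U}$ within distance $r$, apply Lemma \ref{lem:hausdorff lemma} to bound $|T_0-T_1|$ and the distance between the curves, and verify the two defining conditions of $\mathcal{V}$. In fact you are more careful than the paper about the lift-selection step --- the paper simply asserts $(x_1,T_1)\in\mathcal{V}$ once the Hausdorff bound is in hand --- and your homotopy-lifting argument (lift the $C^0$-small homotopy from $x_0$ to $x_1$ starting from the given lift $\tilde{x}_0$ with image in $K$, and use that the covering projection is a local isometry so the lift stays within distance $Cr$ of $K$) is the right way to fill that in. The one slip is the aside ``because $r$ can be taken small'': $r$ is part of the given data (the set $L$ already depends on it), so you cannot shrink it; but this does not matter, since your second argument via lifting the homotopy works for any fixed $r$ and is what actually carries the proof.
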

\begin{proof}
Suppose $(x_{1},T_{1})\in\mathcal{B}_{r}(\mathcal{U})$. Then there
exists $(x_{0},T_{0})\in\mathcal{U}$ with \[
\mbox{dist}((x_{0},T_{0}),(x_{1},T_{1}))<r.\]
By the previous lemma, \[
\mbox{dist}_{\textrm{HD}}(x_{0},x_{1})<Cr\]
and $\left|T_{0}-T_{1}\right|<r$. Thus $(x_{1},T_{1})\in\mathcal{V}$.
\end{proof}
Next, we prove relative completeness of the flow of $-\nabla S_{k}$
on any interval that doesn't contain zero. This proof is very similar
to \cite[Lemma 6.9]{Contreras2006}.
\begin{lem}
\label{lem:relatively complete-1}For all $k\in\mathbb{R}^{+}$, if
$[a,b]\subseteq\mathbb{R}$ is an interval such that $0\notin[a,b]$
then the local flow of $-\nabla S_{k}$ is relatively complete on
$(\Lambda_{0}\times\mathbb{R}^{+})\cap\left\{ a\leq S_{k}\leq b\right\} $.\end{lem}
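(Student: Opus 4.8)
The plan is to show that $-\nabla S_k$-trajectories starting in the sublevel-superlevel strip $\{a \le S_k \le b\}$ cannot escape to the boundary of $\Lambda_0 \times \mathbb{R}^+$ in finite time. Since $S_k$ is non-increasing along the flow, any trajectory $s \mapsto \Phi_s(x_0,T_0)$ stays in $\{S_k \le S_k(x_0,T_0)\} \subseteq \{S_k \le b\}$ for as long as it is defined, and we will also track a lower bound. The only ways the flow could fail to be complete are: (i) the $T$-coordinate runs off to $+\infty$ or down to $0$ in finite time; or (ii) the loop component $x$ develops unbounded $W^{1,2}$-norm, or its lift escapes every compact subset of $\tilde M$, in finite time. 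I would rule these out by deriving a priori bounds that depend only on $[a,b]$, $k$, and the length of the time interval traversed, exactly as in \cite[Lemma 6.9]{Contreras2006}.

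First I would establish the key pointwise estimate. Along a flow line write $(x_s,T_s) = \Phi_s(x_0,T_0)$ and set $\rho(s) := T_s$, $\ell(s)^2 := \int_0^1 |\dot x_s|^2 dt$. Using the Cauchy--Schwarz-type inequality \eqref{eq:cs ineq} and the bound $|S_k| \le \max(|a|,|b|)$ on the strip, Lemma \ref{lem:energy bound} (applied with the relevant homotopy class $\nu=0$; in the case $c(g,\sigma)=\infty$ one restricts to a fixed compact $K$ and uses Proposition \ref{thm:(the-key-theorem new)}'s setup since any primitive is bounded on $K$) gives a uniform upper bound on the energy $e_s = \ell(s)^2/(2T_s)$ of the form $e_s \le b(|b|, B, 0)$ once we know $T_s \le B$. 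So the real content is controlling $T_s$ from above and below. The lower bound $\partial_s T_s = -\partial_T S_k(x_s,T_s)$ is computed from \eqref{eq:d/dT}; combined with the bound $S_k \ge a$ and the energy bound this yields a differential inequality of the form $|\dot T_s| \le C_1 + C_2/T_s$ (schematically), from which $T_s$ cannot reach $0$ or $+\infty$ in finite time — at worst it grows/decays at a controlled rate, so on a finite interval $[0,r]$ it stays in a fixed compact subinterval of $\mathbb{R}^+$ depending only on $[a,b]$, $k$, $r$ and $(x_0,T_0)$. With $T_s$ confined, the energy bound above becomes genuine, and then $\|\dot x_s\|_{L^2}$ is uniformly bounded along the trajectory.

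Next I would control the spatial position. Since $\|\nabla S_k\|$ is what the flow integrates, $\mathrm{dist}((x_0,T_0),(x_s,T_s)) \le \int_0^s \|\nabla S_k(\Phi_u(x_0,T_0))\| du$. On the strip $\{a \le S_k \le b\}$ with $T_s$ and $e_s$ bounded, one shows $\|\nabla S_k\|$ is bounded (this is where one uses that $S_k$ is $C^2$ and the formula \eqref{eq:local coordinates} for its differential, together with the uniform bounds just obtained), so the trajectory moves a bounded distance in $\Lambda_M \times \mathbb{R}^+$ over any finite time interval. By Lemma \ref{lem:hausdorff lemma} this gives a uniform bound on the Hausdorff distance travelled by the loop $x_s$, hence — lifting to $\tilde M$ and invoking Corollary \ref{cor:getting the Lm} — the lifts $\tilde x_s$ stay inside a fixed compact $L \subseteq \tilde M$ for $s$ in any bounded interval. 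Thus the trajectory remains in a complete subset (a closed, $W^{1,2}$-bounded, compact-lift piece of $\Lambda_0 \times \mathbb{R}^+$), so it cannot blow up in finite time, and by the usual ODE continuation argument it is defined for all $s \ge 0$ on which it stays in the strip. That is precisely relative completeness on $(\Lambda_0 \times \mathbb{R}^+) \cap \{a \le S_k \le b\}$.

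The main obstacle is the $T$-variable degenerating to $0$: near $T=0$ the functional $S_k$ and its gradient are singular (the kinetic term $\tfrac{1}{2T}\int|\dot x|^2$ blows up), so one must argue carefully that the lower bound $S_k \ge a$ together with \eqref{eq:d/dT} really does prevent $T_s \to 0$ in finite flow-time rather than merely in infinite time. The point is that if $T_s$ were small then, by \eqref{eq:d/dT}, $\partial_s T_s = \tfrac{1}{T_s}\int_0^{T_s}\{k - 2E\}\,dt$ is controlled since $\int_0^{T_s} 2E\,dt = e_s$ is bounded and $\int_0^{T_s} k\,dt = kT_s \to 0$, giving $\partial_s T_s \approx -e_s/T_s \cdot(\text{bounded})$, i.e. $T_s \partial_s T_s$ is bounded, so $T_s^2$ decreases at most linearly and cannot vanish in finite time. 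I would model this estimate on \cite[Lemma 6.9]{Contreras2006}, adapting the reference loop / homotopy-class bookkeeping so that it works uniformly over the contractible component, and — when $c(g,\sigma)=\infty$ — restricting throughout to $\Lambda_0^L$ for the relevant compact $L$ so that a bounded primitive of $\tilde\sigma$ is available on the region swept out by the flow.
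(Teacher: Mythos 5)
Your argument has two genuine problems, the second of which is fatal to the approach as written.

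First, you propose to bound $\|\nabla S_{k}\|$ pointwise along the flow and conclude the trajectory travels a bounded distance. This is unnecessary and considerably harder than what is needed; the paper's proof avoids it entirely via the Cauchy--Schwarz trick: since $\frac{d}{ds}S_{k}(\Phi_{s})=-\|\nabla S_{k}(\Phi_{s})\|^{2}$, one has
\[
\mbox{dist}(\Phi_{s_{1}},\Phi_{s_{2}})^{2}\leq\left(\int_{s_{1}}^{s_{2}}\|\nabla S_{k}\|\,ds\right)^{2}\leq|s_{1}-s_{2}|\cdot|S_{k}(\Phi_{s_{1}})-S_{k}(\Phi_{s_{2}})|,
\]
which is bounded on any finite $s$-interval as long as $S_{k}$ stays in $[a,b]$. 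This automatically makes the flow a Cauchy path as $s$ approaches the endpoint of its maximal interval, without ever having to estimate $\|\nabla S_{k}\|$ at a single point.

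Second, and more seriously, your argument that $T_{s}$ cannot reach $0$ in finite flow-time does not actually prove that. You derive (modulo a sign error: $\partial_{s}T_{s}=-\partial_{T}S_{k}=-k+2e_{s}/T_{s}$, not $+\tfrac{1}{T_{s}}\int_{0}^{T_{s}}\{k-2E\}\,dt$) that $T_{s}\,\partial_{s}T_{s}$ is bounded, and conclude that ``$T_{s}^{2}$ decreases at most linearly and cannot vanish in finite time.'' But a quantity decreasing linearly \emph{does} vanish in finite time; that sentence is a non sequitur, and there is no differential-inequality barrier preventing $T_{s}\to0$ at some finite $\alpha$. This is the one place where the hypothesis $0\notin[a,b]$ must be invoked, and your proposal never uses it to rule out the $T\to0$ scenario. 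The paper's actual mechanism is different: if $T_{s}\to0$ as $s\uparrow\alpha$, then along a subsequence with $\frac{d}{ds}T_{s_{m}}\leq0$ one gets $e_{m}\leq kT_{m}/2\to0$, hence $l_{m}\to0$ by \eqref{eq:cs ineq}, hence $\int_{C(x_{m})}\sigma\to0$ (Lemma \ref{lem:Cun goes to zero}, using the compact-lift set $K$ obtained from \eqref{eq:cauchy sequence} and Lemma \ref{lem:hausdorff lemma}), and therefore $S_{k}(x_{m},T_{m})\to0$ --- contradicting $0\notin[a,b]$. Without this step your argument has no way to exclude $T_{\alpha}=0$, and the proof is incomplete.
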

\begin{proof}
Let $\Phi_{s}:\Lambda_{M}\times\mathbb{R}^{+}\rightarrow\Lambda_{M}\times\mathbb{R}^{+}$
denote the local flow of the vector field $-\nabla S_{k}$. Then for
any $(x,T)\in\Lambda_{M}\times\mathbb{R}^{+}$, \[
S_{k}(\Phi_{s_{1}}(x,T))-S_{k}(\Phi_{s_{2}}(x,T))=\int_{s_{1}}^{s_{2}}\left|\nabla S_{k}(\Phi_{s}(x,T))\right|^{2}ds.\]
By Cauchy-Schwarz inequality we see that \begin{align*}
\mbox{dist}(\Phi_{s_{1}}(x,T),\Phi_{s_{2}}(x,T))^{2} & \leq\left(\int_{s_{1}}^{s_{2}}\left|\nabla S_{k}(\Phi_{s}(x,T))\right|ds\right)^{2}\\
 & \leq\left|s_{1}-s_{2}\right|\int_{s_{1}}^{s_{2}}\left|\nabla S_{k}(\Phi_{s}(x,T))\right|^{2}ds,\end{align*}
and hence\begin{equation}
\mbox{dist}(\Phi_{s_{1}}(x,T),\Phi_{s_{2}}(x,T))^{2}\leq\left|s_{1}-s_{2}\right|\left|S_{k}(\Phi_{s_{1}}(x,T))-S_{k}(\Phi_{s_{2}}(x,T))\right|.\label{eq:cauchy sequence}\end{equation}
Now suppose we are given a pair $(x,T)\in\Lambda_{0}\times\mathbb{R}^{+}$,
such that there exists $a,b\in\mathbb{R}$ with $0\notin[a,b]$ and
\[
a\leq S_{k}(\Phi_{s}(x,T))\leq b\ \ \ \mbox{for all }s\mbox{ such that }\Phi_{s}(x,T)\mbox{ is defined}.\]
Let $[0,\alpha)$ be the maximum interval of definition of $s\mapsto\Phi_{s}(x,T)$,
where $\alpha\in(0,\infty]$. To complete the proof we need to show
$\alpha=\infty$. Suppose to the contrary.

Write $\Phi_{s}(x,T)=(x_{s},T_{s})$. If $s_{n}\uparrow\alpha$ then
$(\Phi_{s_{n}}(x,T))=:(x_{n},T_{n})$ is a Cauchy sequence by \eqref{eq:cauchy sequence}
in $(\Lambda_{0}\times\mathbb{R}^{+})\cap\{a\leq S_{k}\leq b\}$.
Thus $T_{\alpha}:=\lim_{s\uparrow\alpha}T_{s}$ exists and is finite. 

If $T_{\alpha}>0$ then since the sequence $(x_{n},T_{n})$ is Cauchy,
\[
(x_{\alpha},T_{\alpha}):=\lim_{n\rightarrow\infty}(x_{n},T_{n})\]
 exists and is equal to $\Phi_{\alpha}(x,T)$. Since $S_{k}$ is $C^{2}$
we can extend the solution $s\mapsto\Phi_{s}(x,T)$ at $s=\alpha$,
contradicting the definition of $\alpha$. Thus $T_{\alpha}=0$. Hence
there exists a sequence $s_{m}\uparrow\alpha$ such that \[
\frac{d}{ds}T_{s_{m}}\leq0.\]
As before write $x_{m}:=x_{s_{m}}$ and $T_{m}:=T_{s_{m}}$. By \eqref{eq:cauchy sequence}
and Lemma \ref{lem:hausdorff lemma} we may assume there exists a
compact set $K\subseteq\tilde{M}$ such that $(x_{m},T_{m})\subseteq\Lambda_{0}^{K}\times\mathbb{R}^{+}$
for all $m$. If $y_{m}(t):=x_{m}(t/T_{m})$ then \begin{align*}
0 & \geq\frac{d}{ds}T_{m}\\
 & =-\frac{\partial}{\partial T}S_{k}(x_{m},T_{m})\\
 & =\frac{1}{T_{m}}\int_{0}^{T_{m}}\{-k+2E(y_{m},\dot{y}_{m})\}dt\\
 & =-k+\frac{2e_{m}}{T_{m}},\end{align*}
where the penultimate equality used \eqref{eq:d/dT}. Since $\lim_{m\rightarrow\infty}T_{m}=0$,
this forces $\lim_{m\rightarrow\infty}e_{m}=0$. As in the proof of
the second part of Theorem \ref{thm:the Palais Smale theorem}, this
implies $S_{k}(x_{m},T_{m})\rightarrow0$, contradicting the fact
that $0\notin[a,b]$. This implies that we must have originally had
$\alpha=\infty$, and so completes the proof.
\end{proof}
We now move towards proving the second statement of Theorem \ref{thm:theorem A}.
In fact, we will prove the following stronger result, which is based
on \cite[Proposition 7.1]{Contreras2006}.
\begin{prop}
\label{thm:stronger}Let $c=c(g,\sigma)\in\mathbb{R}\cup\{\infty\}$.
For almost all $k\in(0,c)$ there exists a contractible closed orbit
of $\phi_{t}$ with energy $k$. Moreover this orbit has positive
$S_{k}$-action, and is not a strict local minimizer of $S_{k}$ on
$\Lambda_{0}\times\mathbb{R}^{+}$. This holds for a specific $k\in(0,c)$
if $S_{k}$ is known to satisfy the Palais-Smale condition on the
level $k$. \end{prop}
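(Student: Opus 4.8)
The plan is to realise the orbit as a critical point of $S_k$ on the Hilbert manifold $\mathcal M:=\Lambda_0\times\mathbb R^+$ by means of the mountain pass Theorem \ref{thm:mountain pass theorem}, and then to convert it into a closed orbit through Corollary \ref{cor:relating crit points}. Since $k\in(0,c)$, the action potential $m_k$ is identically $-\infty$, so for every $q\in M$ there is a contractible loop based at $q$ and a $T_0>0$ with $S_k$ as negative as one likes, by \eqref{eq:contractibl}; thus there is room for a mountain pass. The genuine difficulty is that for $c=\infty$ the functional $S_k$ has no global Palais--Smale property --- only the localized one of Proposition \ref{thm:(the-key-theorem new)}, valid on sets $\Lambda_0^K\times\mathbb R^+$ with $K\subseteq\tilde M$ compact and with bounded time coordinate. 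Arranging the minimax so that the deformations involved stay inside such sets is the heart of the matter, and the nested families $\mathcal U_n\subseteq\mathcal V_n$ appearing in Theorem \ref{thm:mountain pass theorem} exist exactly to make this possible; this is the step I expect to be the main obstacle.

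For the minimax scheme let $\mathcal Z\subseteq\mathcal M$ be the set of pairs $(x,T)$ with $x$ a constant loop, and let $\mathcal F$ be the family of images $f([0,1])$ of continuous paths $f:[0,1]\to\mathcal M$ with $S_k(f(0))<0$ and $f(1)\in\mathcal Z$; set $\mathcal F_n:=\mathcal F$ for all $n$. Every $F\in\mathcal F$ is connected. Every $F\in\mathcal F$ also satisfies $\sup_F S_k>\mu_0>0$: given the path $f$, append at its end a slide of the terminal constant loop, with its time coordinate frozen, to the constant loop at the base point of $f(0)$; along this appended piece $S_k$ is constantly equal to $S_k(f(1))\le\sup_F S_k$, so Proposition \ref{sub:mountain pass 1-1} applied to the lengthened path gives the claim. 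Evaluating $S_k$ along one fixed path shows $\mu_\infty:=\inf_{F\in\mathcal F}\sup_F S_k$ is finite, so $0<\mu_0\le\mu_\infty<\infty$. A direct computation from \eqref{eq:local coordinates} and \eqref{eq:d/dT} shows $\nabla S_k\equiv(0,k)$ along $\mathcal Z$, so the flow $\Phi_s$ of $-\nabla S_k$ carries $\mathcal Z$ into itself (only decreasing the time), and it always decreases $S_k$, hence carries $\{S_k<0\}$ into itself; therefore $F_\tau\in\mathcal F$ for every admissible time $\tau$, i.e. $\mathcal F$ is $S_k$-forward invariant. This gives hypotheses (1) and (2) of Theorem \ref{thm:mountain pass theorem}.

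Now fix compact exhaustions $K_1\subseteq K_2\subseteq\cdots$ of $\tilde M$ and reals $B_1<B_2<\cdots\to\infty$, put $\mathcal U_n:=\{(x,T)\in\Lambda_0^{K_n}\times\mathbb R^+:T\le B_n\}$, and use Corollary \ref{cor:getting the Lm} to choose $r_n>0$ and closed sets $\mathcal V_n=\{(x,T)\in\Lambda_0^{L_n}\times\mathbb R^+:T\le B_n+r_n\}$, with $L_n\subseteq\tilde M$ compact, so that $\mathcal B_{r_n}(\mathcal U_n)\subseteq\mathcal V_n$; note $\bigcup_n\mathcal V_n=\mathcal M$. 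Hypothesis (3) of Theorem \ref{thm:mountain pass theorem} follows from Lemma \ref{lem:relatively complete-1} by taking $0<\eta<\mu_\infty$, since then $0\notin[\mu_\infty-\eta,\mu_\infty+\eta]$. For hypothesis (4), given $\varepsilon>0$ pick $F_\varepsilon\in\mathcal F$ with $\sup_{F_\varepsilon}S_k<\mu_\infty+\varepsilon$; as the image of a compact interval, $F_\varepsilon$ has bounded time coordinate and, after a continuous choice of lifts, projects over a compact subset of $\tilde M$, so $F_\varepsilon\subseteq\mathcal U_n$ for all large $n$ and hence $F_\varepsilon\subseteq\mathcal U_n\cap\{S_k\le\mu_\infty+\varepsilon\}$. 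For hypothesis (5), $\mathcal B_{r_n}(\mathcal U_n)\subseteq\mathcal V_n$ holds by construction, and $S_k|_{\mathcal V_n}$ satisfies Palais--Smale at level $\mu_\infty$: any such sequence lies in $\Lambda_0^{L_n}\times\mathbb R^+$ with bounded time and bounded action, so Proposition \ref{thm:(the-key-theorem new)} applies (Theorem \ref{thm:the Palais Smale theorem} if $c<\infty$), the alternative $\liminf T_n=0$ being excluded since it would force $S_k\to0\ne\mu_\infty$, and the $W^{1,2}$-limit lies in the closed set $\mathcal V_n$.

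Theorem \ref{thm:mountain pass theorem} then produces a critical point $(x_\ast,T_\ast)$ of $S_k$ on $\mathcal M$ with $S_k(x_\ast,T_\ast)=\mu_\infty>0$; by Corollary \ref{cor:relating crit points} the curve $t\mapsto x_\ast(t/T_\ast)$ is a contractible closed orbit of $\phi_t$ with energy $k$, of positive $S_k$-action $\mu_\infty$. Moreover every $F\in\mathcal F$ contains a point with $S_k<0$, so $\sup_{F\in\mathcal F}\inf_{x\in F}S_k(x)\le 0<\mu_\infty$, which is \eqref{eq:strict local minimizer}; the final assertion of Theorem \ref{thm:mountain pass theorem} then gives a point of $K_{\mu_\infty,\mathcal M}$ that is not a strict local minimizer of $S_k$, and this is the orbit claimed. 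The only property of $S_k$ used beyond those established above is the Palais--Smale condition at the minimax level $\mu_\infty$, which the localization supplies for every $k\in(0,c)$ (and for any particular $k$ where one simply assumes it, the argument is unchanged); in particular the conclusion holds for almost all $k\in(0,c)$.
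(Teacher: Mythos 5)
The decisive ingredient of the paper's proof is Struwe's monotonicity trick, and it is entirely absent from your argument. The paper does not fix $k$ and verify the mountain-pass hypotheses directly; rather it studies the whole family $\mu_n(k),\mu_\infty(k)$ as $k$ ranges over an interval $J$, observes that these are monotone in $k$, and invokes Lebesgue's theorem to obtain a full-measure set $J_0\subseteq J$ on which $\mu_\infty$ is locally Lipschitz. For $j\in J_0$ with local Lipschitz constant $M(j)$, and approximating paths $f_{n,m}\in\Gamma_n$ with $\max_s S_{j_m}(f_{n,m}(s))\le\mu_n(j_m)+(j_m-j)$, the identity $T_s^{n,m}=\bigl(S_{j_m}(f_{n,m}(s))-S_j(f_{n,m}(s))\bigr)/(j_m-j)$ combined with the Lipschitz bound yields $T_s^{n,m}\le M(j)+2$ whenever $S_j(f_{n,m}(s))>\mu_\infty(j)-(j_m-j)$. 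This \emph{a priori bound on the time coordinate}, depending only on $j$, is what allows the paper to take $\mathcal U_n=\{(x,T)\in\Lambda_0^{K_n}\times\mathbb R^+ : T\le M(j)+2\}$ and $\mathcal V_n=\{(x,T)\in\Lambda_0^{L_n}\times\mathbb R^+ : T\le M(j)+3\}$ with a \emph{fixed} time bound, varying only the compact set as $n$ grows. It is also precisely the source of the restriction to \emph{almost all} $k$.

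Your verification of hypothesis (4) produces no such uniform bound. You choose $\mathcal U_n$ with $T\le B_n\to\infty$ and argue that any single $F_\varepsilon\in\mathcal F$ with $\sup_{F_\varepsilon}S_k<\mu_\infty+\varepsilon$, being compact, is contained in $\mathcal U_n$ for all large $n$. But as $\varepsilon\to0$ nothing controls the time coordinate or spatial spread of $F_\varepsilon$, so the threshold $n(\varepsilon)$ can grow arbitrarily fast. The contradiction in the proof of Theorem \ref{thm:mountain pass theorem} is obtained by applying Lemma \ref{lem:finding an admissable time} to a particular $(\mathcal U_n,\mathcal V_n)$, producing an $\varepsilon$ that depends on that pair, and then feeding this $\varepsilon$ back into hypothesis (4); for this to close one needs the set $F$ supplied by (4) to lie inside the \emph{same} $\mathcal U_n$ to which the deformation applies, which the Struwe bound guarantees and your unbounded $B_n$ do not. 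Indeed, if your argument were sound as written it would prove the existence of a contractible closed orbit on \emph{every} energy level $k\in(0,c)$, not merely almost every --- a statement strictly stronger than the proposition and a notoriously open problem. The closing remark that ``the localization supplies'' Palais--Smale at level $\mu_\infty$ for every $k$ confuses the localized Palais--Smale property of Proposition \ref{thm:(the-key-theorem new)} (valid on each $\Lambda_0^K\times\mathbb R^+$ with $T$ bounded) with the genuine content needed, namely a uniform control on the minimax paths, which is exactly what the monotonicity trick supplies and what your proof lacks.

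Minor points that are fine: the computation $\nabla S_k\equiv(0,k)$ along constant loops, the resulting forward invariance of your family $\mathcal F$, and the verification of hypotheses (2), (3) and (5) are all correct. The proof also needs the observation, present in the paper but missing here, that $\mu_\infty>0$ uses Proposition \ref{sub:mountain pass 1-1} together with the choice of $T_1$ small enough that $kT_1<\mu_0(k)$; your claim that $\mu_\infty\ge\mu_0$ requires the appended slide of the terminal constant loop to \emph{not} raise the supremum above $\sup_F S_k$, which in turn needs the terminal time coordinate to satisfy $kT_1<\mu_0$, a condition you state informally (``with its time coordinate frozen'') but do not arrange for; this is easy to fix but should be made explicit.
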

\begin{proof}
Fix $k_{0}\in\mathbb{R}^{+}$. There exists $(x_{0},T_{0})\in\Lambda_{0}\times\mathbb{R}^{+}$
such that $S_{k_{0}}(x_{0},T_{0})<0$. Indeed, there exists a closed
curve $\tilde{y}:[0,T_{0}]\rightarrow\tilde{M}$ such that $A_{k_{0}}(\tilde{y})<0$.
Then the projection $y:[0,T_{0}]\rightarrow M$ of $\tilde{y}$ to
$M$ is a closed curve, and if $x_{0}(t):=y(tT_{0})$ then $(x_{0},T_{0})\in\Lambda_{0}\times\mathbb{R}^{+}$
and $S_{k_{0}}(x_{0},T_{0})=A_{k_{0}}(\tilde{y})<0$. There exists
$\varepsilon>0$ such that for all $k\in J:=[k_{0},k_{0}+\varepsilon]$
we have $S_{k}(x_{0},T_{0})<0$. 

Let $x_{1}$ denote the constant loop at $x_{0}(0)$. Given $k\in J$,
let $\mu_{0}(k)>0$ be the constant given by Proposition \ref{sub:mountain pass 1-1}
such that any path $f\in C^{0}(I,\Lambda_{0}\times\mathbb{R}^{+})$
with $f(0)=(x_{0},T_{0})$ and $f(1)=(x_{1},T)$ for some $T>0$ satisfies
\[
\sup_{s\in I}S_{k}(f(s))>\mu_{0}(k).\]
Choose $T_{1}>0$ such that \[
T_{1}<\inf_{k\in J}\frac{\mu_{0}(k)}{k}.\]
Then \[
\max\{S_{k}(x_{0},T_{0}),S_{k}(x_{1},T_{1})\}=kT_{1}<\mu_{0}(k)\ \ \ \mbox{for all }k\in J.\]

Set\[
\Gamma:=\left\{ f\in C^{0}(I,\Lambda_{0}\times\mathbb{R}^{+})\,:\, f(0)=(x_{0},T_{0}),f(1)=(x_{1},T_{1})\right\} .\]
Let $(K_{n})\subseteq\tilde{M}$ denote compact sets such that $K_{n}\subseteq K_{n+1}$
and $\bigcup_{n}K_{n}=\tilde{M}$. Let \[
\Gamma_{n}:=\Gamma\cap C^{0}(I,\Lambda_{0}^{K_{n}}\times\mathbb{R}^{+}).\]

Define for $k\in J$,\[
\mu_{n}(k):=\inf_{f\in\Gamma_{n}}\sup_{s\in I}S_{k}(f(s));\]
\[
\mu_{\infty}(k):=\inf_{f\in\Gamma}\sup_{s\in I}S_{k}(f(s)).\]
Then $\mu_{n}(k)\geq\mu_{n+1}(k)\geq\mu_{\infty}(k)\geq\mu_{0}(k)$
for all $n\in\mathbb{N}$ and $k\in J$, and the functions $\mu_{n}:J\rightarrow\mathbb{R}$
converge pointwise to $\mu_{\infty}$. Moreover both $\mu_{n}$ and
$\mu_{\infty}$ are non-decreasing. Since $\mu_{\infty}$ is non-decreasing,
by Lebesgue's theorem there exists a subset $J_{0}\subseteq(k_{0},k_{0}+\varepsilon)$
with $J\backslash J_{0}$ having measure zero such that $\mu_{\infty}|_{J_{0}}$
is locally Lipschitz. In other words, for all $j\in J_{0}$ there
exist constants $M(j)>0$ and $\delta(j)>0$ such that for all $\left|\delta\right|<\delta(j)$
it holds that \[
\left|\mu_{\infty}(j+\delta)-\mu_{\infty}(j)\right|<M(j)\left|\delta\right|.\]
Fix $j\in J_{0}$ and a sequence $(j_{m})\subseteq J_{0}$ with $j_{m}\downarrow j$.
Let $f_{n,m}\in\Gamma_{n}$ be paths such that \[
\max_{s\in I}S_{j_{m}}(f_{n,m}(s))\leq\mu_{n}(j_{m})+(j_{m}-j).\]
Next, define\[
\mathcal{U}_{n}:=\left\{ (x,T)\in\Lambda_{0}^{K_{n}}\times\mathbb{R}^{+}\,:\, T\leq M(j)+2\right\} .\]
Choose another collection $(L_{n})\subseteq\tilde{M}$ of compact
sets such that $K_{n}\subseteq L_{n}$, and such that if\[
\mathcal{V}_{n}:=\left\{ (x,T)\in\Lambda_{0}^{L_{n}}\times\mathbb{R}^{+}\,:\, T\leq M(j)+3\right\} ,\]
then $\mathcal{B}_{1}(\mathcal{U}_{n})\subseteq\mathcal{V}_{n}$.
Such a collection $(L_{n})$ exists by Corollary \ref{cor:getting the Lm}.
Since $\mu_{\infty}(j)\ne0$, from Proposition \ref{thm:(the-key-theorem new)}
it follows that $S_{j}|_{\mathcal{V}_{n}}$ satisfies the Palais-Smale
condition at the level $\mu_{\infty}(j)$ for all $n\in\mathbb{N}$. 

Since $k\mapsto S_{k}(x,T)$ is increasing,\begin{equation}
\max_{s\in I}S_{j}(f_{n,m}(s))\leq\max_{s\in I}S_{j_{m}}(f_{n,m}(s))\leq\mu_{n}(j_{m})+(j_{m}-j).\label{eq:increasing}\end{equation}
If $s\in I$ is such that \[
S_{j}(f_{n,m}(s))>\mu_{\infty}(j)-(j_{m}-j),\]
writing $f_{n,m}(s)=:(x_{s}^{n,m},T_{s}^{n,m})$ we have: \begin{align*}
T_{s}^{n,m} & =\frac{S_{j_{m}}(f_{n,m}(s))-S_{j}(f_{n,m}(s))}{j_{m}-j}\\
 & \leq\frac{\mu_{\infty}(j_{m})-\mu_{n}(j)}{j_{m}-j}+2\\
 & \leq\frac{\mu_{\infty}(j_{m})-\mu_{\infty}(j)}{j_{m}-j}+2\\
 & \leq M(j)+2,\end{align*}
for $n$ large enough. 

Given $\varepsilon>0$, first choose $m$ large enough such that \[
j_{m}-j<\frac{\varepsilon}{2(M(j)+1)},\]
 and then select $n$ large enough so that \[
\mu_{n}(j_{m})-\mu_{\infty}(j_{m})<\varepsilon/2.\]
Then \begin{align*}
\mu_{n}(j_{m})-\mu_{\infty}(j)+(j_{m}-j) & =\left\{ \mu_{n}(j_{m})-\mu_{\infty}(j_{m})\right\} +\left\{ \mu_{\infty}(j_{m})-\mu_{\infty}(j)\right\} +(j_{m}-j)\\
 & <\varepsilon/2+M(j)(j_{m}-j)+(j_{m}-j)\\
 & <\varepsilon.\end{align*}
Then by \eqref{eq:increasing}, \[
f_{n,m}(I)\subseteq\left\{ S_{j}\leq\mu_{\infty}(j)-(j_{m}-j)\right\} \cap\left(\mathcal{U}_{n}\cap\left\{ S_{j}\leq\mu_{\infty}(j)+\varepsilon\right\} \right).\]
Since $\mu_{\infty}(j)\ne0$, by Lemma \ref{lem:relatively complete-1}
the gradient flow of $-S_{j}$ is relatively complete on $\{\mu_{\infty}(j)-\eta\leq S_{j}\leq\mu_{\infty}(j)+\eta\}$
for some $\eta>0$. Theorem \ref{thm:mountain pass theorem} then
gives a critical point for $S_{j}|_{\Lambda_{0}\times\mathbb{R}^{+}}$
which is not a strict local minimizer (we are applying Theorem \ref{thm:mountain pass theorem}
with $\mathcal{F}_{n}:=\{f(I)\,:\, f\in\Gamma_{n}\}$).\newline

Finally, suppose that $k<c(g,\sigma)\leq\infty$ is such that $S_{k}$
satisfies the Palais-Smale condition. In this case the theorem is
immediate from Lemma \ref{lem:relatively complete-1} and Theorem
\ref{thm:mountain pass theorem}. Indeed, by Lemma \ref{lem:relatively complete-1}
we may simply take $\mathcal{U}_{n}=\mathcal{V}_{n}=\mathcal{V}_{\infty}=\Lambda_{0}\times\mathbb{R}^{+}$,
as then the hypotheses of Theorem \ref{thm:mountain pass theorem}
are trivially satisfied. This completes the proof of Theorem \ref{thm:theorem A}.
\end{proof}
\bibliographystyle{amsplain}
\bibliography{C:/Users/will/willbibtex}

\end{document}